\newenvironment{varalgorithm}[1]
{\algorithm}
{\endalgorithm}
\newcommand{\EE}{\mathbb{E}}
\newcommand{\VV}{\mathbb{V}}
\newcommand{\R}{\mathbb{R}}
\newcommand{\N}{\mathbb{N}}
\newcommand{\Z}{\mathbb{Z}}
\newcommand{\Prob}{\mathbb{P}}
\newcommand{\Law}[1]{\operatorname{Law}(#1)}
\newcommand{\E}{\mathbb{E}}
\newtheorem{theorem}{Theorem}
\newtheorem{corollary}{Corollary}
\newtheorem{definition}{Definition}
\newtheorem{remark}{Remark}
\newtheorem{example}{Example}
\newtheorem{lemma}{Lemma}
\newcommand{\mY}{\mathcal{Y}}
\newcommand{\ma}{a}
\newcommand{\vb}{b}
\newcommand{\vB}{B}
\DeclarePairedDelimiter\ceil{\lceil}{\rceil}
\DeclarePairedDelimiter\floor{\lfloor}{\rfloor}
\DeclareMathOperator{\argmin}{arg\min}
\begin{document}
\title{Markov Chain Approximations to Stochastic Differential Equations by Recombination on Lattice Trees}

\author{Francesco Cosentino\thanks{\scriptsize{Mathematical Institute, University of Oxford  \& The Alan Turing Institute, \url{name.surname@maths.ox.ac.uk}}}
\and Harald Oberhauser\thanks{{\scriptsize{Mathematical Institute, University of Oxford,  \url{oberhauser@maths.ox.ac.uk}}}}
\and Alessandro Abate\thanks{\scriptsize{Department of Computer Science, University of Oxford  \& The Alan Turing Institute, \url{name.surname@cs.ox.ac.uk}}}
}

\date{}
\maketitle
 \begin{abstract}
   We revisit the classical problem of approximating a stochastic differential equation by a discrete-time and discrete-space Markov chain.
   Our construction iterates Carath\'eodory's theorem over time to match the moments of the increments locally.
   This allows to construct a Markov chain with a sparse transition matrix where the number of attainable states grows at most polynomially as time increases.
   Moreover, the MC evolves on a tree whose nodes lie on a ``universal lattice'' in the sense that an arbitrary number of different SDEs can be approximated on the same tree.
   The construction is not tailored to specific models, we discuss both the case of uni-variate and multi-variate case SDEs, and provide an implementation and numerical experiments. 
 \end{abstract}

\section{Introduction}
\normalem
Consider a stochastic differential equation (SDE) with generic vector fields $\mu$ and $\sigma$ 
\begin{align}\label{eq:SDE_ch6sec1}
  \dif X_t = \mu(X_t) \dif t + \sigma(X_t) \dif W_t, \quad X_0=x \in \R^d
\end{align}
and driven by a multi-dimensional Brownian motion $W$.
The aim of this article is to construct a map
\begin{align}
\mathbf{DISCRETIZE}_n: (\mu, \sigma, x) \mapsto (M_n,x)  
\end{align}
parametrized by a discretization parameter $n=1,2,\ldots$ that takes as inputs the drift $\mu$ and the diffusion vector field $\sigma$, as well as the starting position $x$, and which outputs the transition matrix $M$ of a Markov chain\footnote{Throughout we refer to a discrete-time and discrete-space Markov process when we refer to a Markov chain (MC).} (MC),  denoted as $X^n = (X^n_i)_{i=0,1,2,\ldots}$ with starting value $X^n_0=x$, and such that
\begin{align}
  \Law{X^n} \to \Law{X} \text{ as }{n \to \infty}\text{ and the state space of the Markov Chain }X^n \text{ is a sparse tree.}    
\end{align}
Naive discretization strategies generically lead to an explosion of the state space of the MC.
\begin{example}\label{Example}
  Consider the one-dimensional SDE 
  \begin{align}\label{eq:simple SDE}
    \dif X_t = \sigma(X_t)\dif W_t, \quad X_0=x \in \R
  \end{align}
and the Markov chain $X^n$ given (implicitly)  
    as the discretized Euler scheme 
    \[
      X^n_{i+1} =X^n_{i} + n^{-1/2} \sigma(X^n_{i}) B_i,
    \]
    where $B_i$ denote independent Bernoulli random variables.
    Then $X^n$ converges in law to $X$, but for generic volatility $\sigma$ the cardinality of the state space of $X^n$ grows in time $i$ as $O(2^i)$ because the tree nodes of the random walk ``do not recombine''. 
    However, the special case $\sigma(x)=1$ shows that ``small state spaces'' are possible since in this case the simple symmetric random walk 
    \[X^n_{i+1} = X^n_i + n^{-1/2} B_i, \]
converges weakly to $X$ -- in this case a Brownian Motion -- and the cardinality of the state space grows only linearly with the number of steps since $X^n$ evolves on a ``recombining tree''.  
\end{example}
Although Example~\ref{Example} is elementary, it motivates the arguably most popular approach to time-space discretization, namely to transform the SDE \eqref{eq:SDE_ch6sec1} into an SDE with constant volatility where naive time-space discretization strategies work. 
The main contribution of this work is to completely avoid such transformations (change of measure, Doss--Sussman, Lamperti, etc.) by using ideas from convex geometry about reducing the support of discrete measures; so called \emph{recombination of measures}.
This results in several attractive properties of $\mathbf{DISCRETIZE}_n$ compared to classical approaches: in particular, it naturally extends to the multi-variate case and even for one-dimensional diffusion it relaxes assumptions on the vector fields $\mu$ and $\sigma$.
Moreover, the construction is universal in the sense that when applied to many SDEs $X,Y,..$, the resulting MCs $X^n,Y^n,\ldots$ evolve in the same state space that is a lattice, in fact on the same recombining tree with different branching probabilities. %

\paragraph{Related Work.}
The problem of approximating an SDE by a MC arises in many areas: 
\begin{description}
\item[Mathematical Finance.]
  One of the earliest motivations came from American option pricing and resulted in the famous Cox--Ross--Rubinstein~\cite{Cox1979} model which uses a MC to approximate the Black--Scholes SDE. 
  A big improvement was made in the work of \citet{Nelson1990}, which uses the Lamperti transform to construct a recombining tree so that the number of states of the MC increases at most linearly with the number of steps, thus avoiding the exponential explosion of states in \cite{Cox1979}. 
  This approach has been refined and extended to models beyond Black--Scholes, such as multi-variate SDEs \cite{Boyle1989,Ekvall1996}, such as GBM, CIR and CEV models \cite{Boyle1988,Gamba2001,Gamba2007,Nawalkha2007, Xu2009,ANLONG1995,Finucane1996}, regime-switching \cite{R.H.2010,Liu2013,Wahab2009}, stochastic volatility models \cite{Florescu2008,Leisen1998,Moretto2009,Leisen1998,Moretto2009}, and many other model-specific solutions \cite{Ho1995,Haahtela2010,Lok2019}. 
  We also highlight that approaches that do not rely on transformations have been developed such as \cite{RUBINSTEIN1994,Rubinstein1998,Simonato2008} that use probability density functions approximations such as Edgeworth expansions, model specific approaches \cite{Haahtela2010,Lok2019}, or moment approximations \cite{Costabile2012,Costabile2016,LECCADITO2012,Ji2010}.
All of these have areas where they are advantage but among the drawbacks are
they rely on either genuinely one-dimensional arguments, exploit model-specific
properties, can result in negative transition probabilities that arise due to
transformations or moment approximations.

\item[Optimal Control.]
  American option pricing can be seen as a special case of an optimal control problem where MC approximations are classic; see \citet{Kushner1990} and~\cite[Chapter 10, Theorem 6.2]{Kushner2001} for general background. 
  An essential tool in this literature is to construct MCs by matching local moments \cite{Kushner1990,Kushner2001}\footnote{See e.g. \cite[Equation (1.3), page 71]{Kushner2001}.} which ensures convergence to the SDE.
  These local consistency conditions are also central to our construction (Definition~\ref{def:local_consistency} and Lemma~\ref{lem:eqv_loc_cons})\footnote{In particular, for the schemes we build the conditions required by \cite{Kushner1990,Kushner2001} can be proved to be equivalent thanks to the bounds of Theorems~\ref{th:mm_d=1}, \ref{th:mm_d=2}, \ref{th:mm_d=d}.}. 
  Note that in general the topology of weak convergence is too coarse to guarantee that the solution of an optimal control problem for MCs can be transferred to SDEs but under additional assumptions this holds; in particular, we show that this is the case for our lattice-tree model approximation under essentially the same assumption used in the approximation of \cite{Kushner2001}. 
  We also highlight \cite{Haahtela2010,Lok2019}

\item[Formal Verification.]
  More recently, the formal verification community has started to investigate how to formally verify SDEs \cite{HuP03,BJ06,ZMMAL14,ZTA17}. 
  Software tools, such as PRISM \cite{Kwiatkowska2004} or Storm \cite{dehnert2017storm}, exploit structural properties of the transition matrix of MCs to allow scalable formal proofs that are applied to check whether probabilistic temporal-logic properties hold or not.  
  These tools can be leveraged for the formal verification of SDEs by using them on MC approximations that are close in law to the given SDE.
  In this context, it is especially important that firstly, the discretization applies to large classes of SDEs under generic assumptions on drift and volatility vector fields; secondly, it is essential that the approximating MC transition matrix has structure, such as sparsity. %

\end{description}
Similarly, the topic of reducing the support of a measure while matching statistics given as integrals against a set specified functions (such as monomials) is well studied:
\begin{description}
\item[Recombination.] If the measure is discrete then it is a direct consequence of Carath\'eodory's Theorem that such a reduction is possible.
However, the proof of Carath\'eodory's Theorem is not constructive and the design of algorithms that carry out such a measure construction efficiently is still the subject of recent research; e.g.~just over the last ten years the articles \cite{maria2016a,Maalouf2019,Litterer2012,hayakawa2021estimating,hayakawa2021positively,Hayakawa2020MonteCarloCC, Cosentino2020} provide novel algorithms.
  Recombination has already been used in the context of SDE simulations: in order to make ``Cubature on Wiener Space'' \cite{Lyons2004CubatureOW} efficient, a recombination step is applied iteratively over time, similar in spirit to our construction; see~\cite{Litterer2012} for a discussion on how powerful this can be for cubature methods on Wiener space. 
  However, there the focus is to match the marginal distribution of an SDE at a fixed time but the total number of possible states that can be visited over time grows very quickly.
  This makes this approach too expensive to store the whole model which in turn is required for the above mentioned applications in finance, optimal control, and formal verification.
  Nevertheless, combining the result of this paper with cubature paths for Brownian motion might be an interesting future research venue.
\end{description}

{%

\paragraph{Contribution and Outline.}
Our approach is inspired by the above mentioned local consistency conditions.
The novelty is that we iterate Carath\'eodory's recombination to obtain the algorithm $(M_n,x)\coloneqq \mathbf{DISCRETIZE}_n(\mu,\sigma,x)$, such that the resulting MC $X^n=(X^n_i)$ with transition matrix $M_n$ satisfies the following points: 
\begin{enumerate}
\item the MC $X_n$ is a always a stochastic process, that is the transition matrix $M_n$ is a stochastic matrix.
  This is noteworthy, since one of the biggest drawbacks of the landmark paper~\citet{Nelson1990} and many of the above mentioned literature, is that it can result in ``negative transition probabilities'' between nodes, already in the one-dimensional case.
\item the transition matrix $M_n$ is sparse and as a consequence the support of $X_n$ grows slowly.
  This allows to store the whole model, which in turn allows the use in optimal stopping or formal verification algorithms. 
\item the approach is not tailored to specific models under the standard regularity assumptions on the vector fields, and it extends to the multi-dimensional case.
  For dimensions $d\ge 3$ some additional assumptions are needed, but notice that there are very few baselines to compare to, since most results are model-specific or only apply in the one-dimensional case. 
\item The state space is a universal lattice in the terminology of \citet{Chen1999}: for every $n$ there exists a $\gamma_n$ such that $\mathbf{DISCRETIZE}_n$ applied to many SDEs $X,Y,..$ gives MCs $X^n,Y^n,\ldots$ that evolve on the same recombining tree that has as nodes a subset of the lattice $\gamma_n\Z^d$.%
\end{enumerate}
The structure of the paper is as follows: 
Section~\ref{sec: main construction} provides general background and informally describes the algorithm $\mathbf{DISCRETIZE}_n$. 
Sections~\ref{sec: Models for One-Dimensional Diffusions}, 
\ref{sec: Models for Two-Dimensional Diffusions} and~\ref{sec: Models for Multi-Dimensional Diffusions} contain the main theoretical results and are divided based on the dimension treated: Section \ref{sec: Models for One-Dimensional Diffusions} treats one-dimension state spaces and Section \ref{sec: Models for Two-Dimensional Diffusions} treats two-dimensional state spaces.
The general case is treated in \ref{sec: Models for Multi-Dimensional Diffusions} and makes stronger assumptions on the vector fields.
Section~\ref{sec:prev_works} compares to previous work and expands on applications such as optimal stopping. 
Finally, Section~\ref{sec:Algorithm and Experiments} turns the theoretical results of the previous sections into algorithms and presents numerical experiments. 
\section*{Acknowledgments}
The authors want to thank The Alan Turing Institute and the University of Oxford for the financial support given. 
FC is grateful to Christian Bayer and Terry Lyons for helpful remarks.
FC is supported by The Alan Turing Institute, TU/C/000021, under the EPSRC Grant No. EP/N510129/1. 
HO is supported by the EPSRC grant “Datasig” [EP/S026347/1], The Alan Turing Institute, and the Oxford-Man Institute. 
AA is supported in part by the HICLASS project (113213) from the ATI , BEIS, and Innovate UK. 

\section{Background}\label{sec: main construction}
In this Section, we provide background and outline the basic construction. 
\paragraph{Lattice Approximations.} \label{sec: diffusion moments}
Throughout we consider a $d$-dimensional SDEs in Ito form
\begin{align}\label{eq:SDE_ch6sec1}
  \dif X_t^{s,x} = \mu(X_t^{s,x}) \dif t + \sigma(X_t^{s,x}) \dif W_t, \quad\quad X_s^{s,x}=x \in \R^d,
\end{align}
on a time interval $[0,T]$ where $\mu:\R^d\to\R^d$ and $\sigma:\R^d\to\R^{d\times h}$. 
When there is no confusion, we denote $X^{s,x}$ as $X$ and we use $X^{x}$ if $s=0$.  
We recall that if $\mu,\sigma$ are Lipschitz continuous and have linear growth,  then there exists a pathwise unique solution to Equation~\eqref{eq:SDE_ch6sec1}, see for example~\citet{NIkeda2014} and for weaker conditions see \cite{Stroock2006}. 
Our goal is find a sequence $(X_n)$ of MC that converges fast in weak topology to $X$.
\begin{definition}\label{def:conv}
  Let $(X^n)_{n \ge 1}$ be a sequence of Markov chains and denote $X^n=(X^n_i)_{i \ge 0}$ to emphasize the time-coordinate $i$ .
  Further, let $(\gamma_n)_{n \ge 1}$ {$\subset (0,\infty)$} a sequence that converges to $0$. 
  We say that $(X^n)_{n \ge 1}$  
  \begin{enumerate}
  \item 
is a lattice approximation to $X$
  if 
$
    X^n_i \in \gamma_n \mathbb{Z}^d \text{ for all } n> 0, \, i \ge 0;
$
\item 
converges weakly to $X$ with {respect to a class of functions $\mathbb{F}$}, if for every $f \in \mathbb{F}$, $t \in [0,T]$
$
    \lim_{n \to \infty} \E[f(X^n_{\lfloor n t/T\rfloor})] = \E[f(X_t)];
    $
    \item 
converges weakly to $X$ with rate $\alpha$ {respect to the class of functions $\mathbb{F}$} if for every $f\in\mathbb{F}$, $t \in [0,T]$ 
      \begin{align}
        \E[f(X_t)] - \E[f(X^n_{\lfloor n t/T \rfloor })] = O(n^{-\alpha}) .
      \end{align}

  \end{enumerate}
\end{definition}
\noindent
Two important classes $\mathbb{F}$ of test functions for our approach are the continuous differentiable functions with polynomial growth and, resp.~with linear growth: 
$C_P^l(E,F)$ denotes the space of functions from $E$ to $F$ $l$ times continuously differentiable with polynomial growth, including their derivatives;  
similarly $C_b^l(E,F)$ denotes the space of functions from $E$ to $F$ $l$ times continuously differentiable with linear growth, with uniformly bounded derivatives.
\paragraph{State Space Growth.}
An essential requirement is that the number of states that the MC can attain grows slowly as time progresses.
Example~\ref{Example} shows that in naive discretization schemes the support of $X_i^n$ grows exponentially in $i$, but sometimes the growth can be at most polynomial.
\begin{definition}\label{def:recombines}%
  A Markov chain $X^n=(X^n_i)_{i=0,1,2,\ldots}$ is sparse if $\operatorname{card}[\operatorname{supp}(X_{i}^{n})]=O(i^d)$ as $i\to \infty$. 
\end{definition}
\noindent
The same definition appears in \citet{Nelson1990} but there they use instead the term ``recombining'' for what we call a sparse MC in Definition \ref{def:recombines}.
We prefer the term ``sparse'' for two reasons: firstly, to avoid confusion with the ``recombination of measure'' that is essential in our construction; secondly, we want to highlight that, unlike previous work, the emphasis of our approach is not the construction of a recombining tree -- instead, we focus directly on controlling the size of the support. 
If a MC is sparse in the sense of Definition~\ref{def:recombines}, then its transition matrix has sparse row entries (since there are $d^i$ lattice points and only $O(i^d)$ can be reached). 
That the resulting MC evolves on a recombining lattice tree is a consequence of this approach, rather than its main goal (when $n$ gets large, see Section~\ref{sec:Algorithm and Experiments} for examples).  

\paragraph{Local Consistency.}
Denoting with \[\Sigma(x) = \sigma(x)\sigma(x)^\top\] and applying Ito's lemma\footnote{$\mu, \sigma \in C^4_b$ is sufficient; 
~\cite{Kloeden1992, Stroock2006} for weaker conditions} 
shows that there exists a $q\ge1$ and $\alpha>0$ and natural number $N$ such that for all $n>N$
{
\begin{align}  |\EE[X_{t+n^{-1}}-X_{t}|X_{t}=x]-\mu(x)n^{-1}|\leq&c(1+|x|^{q})n^{-1-\alpha}\\|\EE[(X_{t+n^{-1}}-X_{t}-n^{-1}\mu(X_{t}))^{\otimes2}|X_{t}=x]-\Sigma(x)n^{-1}|\leq&c(1+|x|^{q})n^{-1-\alpha},
\end{align}
}
where the constant $c=c(\mu,\sigma)$ depends only the vector fields. 
These two estimates characterize the SDE %
and it is a classic result that MCs that locally approximate these two moments converge weakly. 
\begin{definition}\label{def:local_consistency} [Local consistency \cite[page 328]{Kloeden1992}]
  We say that a sequence $(X^n)_{n \ge 1}$ of MCs is locally $\delta(n)$-consistent with $X$, if there exists a function $n \mapsto \delta(n)$ such that $\delta(n)\to 0$ as $n\to\infty$ and 
  for every $i\leq n$ %
  \begin{align*}\label{eq:KP_consistency}
    \EE\left(\EE\left[\frac{X_{i+1}^{n}-X_{i}^{n}}{n^{-1}}\Big|X_{i}^{n}\right]-\mu(X_{i}^{n})\right)^{2}=O(\delta(n)), 
    \text{ and }\EE\left(\EE\left[\frac{(X_{i+1}^{n}-X_{i}^{n})^{\otimes2}}{n^{-1}}\Big|X_{i}^{n}\right]-\Sigma(X_{i}^{n})\right)^{2}=O(\delta(n)).
\end{align*}
\end{definition}
\begin{theorem}\label{th:KP}\cite[Theorem 9.7.4]{Kloeden1992} %
  Let $(X^n)$ be locally $\delta(n)$ consistent with $X$ and further assume %
\begin{align}
\EE[|X_{i+1}^{n}-X_{i}^{n}|^{6}]\leq&{\delta(n)n^{-2}}\text{ and }\EE[\max_{i}|X_{i}^{n}|^{2q}]\leq  c(1+|X_{0}|^{2q}) \quad \text{ for every } q\ge 1,
\end{align}
and that the vector fields coefficients $\mu, \sigma$ are in $C^4_b$.
Then\footnote{The rate of convergence is stated in the proof of \cite[Theorem 9.7.4]{Kloeden1992}.} %
\begin{align}\label{eq:KP_conv}
|\EE f(X_{n}^{n})-\EE f(X_{T})|\leq c\cdot\left(\min\left\{ \sqrt{\delta(n)},n^{-1/2}\right\} \right),\quad f\in C_{P}^{3}(\R^{d},\R), 
\end{align}  
where the constant $c$ {depends only on $x, \mu, \sigma, f$.}%
\end{theorem}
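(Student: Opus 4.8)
The plan is to follow the classical telescoping/one-step-error argument for weak convergence of Markov chain schemes; in fact the statement is \cite[Theorem~9.7.4]{Kloeden1992}, and what follows only sketches its structure. First I would set up the backward evolution $u(t,x)\coloneqq\EE[f(X_T)\mid X_t=x]$ and record its regularity: writing $\mathcal{L}$ for the generator of \eqref{eq:SDE_ch6sec1}, $\mathcal{L}g=\mu\cdot\nabla g+\tfrac12\Sigma:\nabla^2 g$, the assumptions $f\in C_P^3$ and $\mu,\sigma\in C_b^4$ yield, via Feynman--Kac estimates, that $u$ solves $\partial_t u+\mathcal{L}u=0$ with terminal datum $u(T,\cdot)=f$ and that $u(t,\cdot)\in C_P^3$ with the polynomial-growth bounds on $u,\nabla u,\nabla^2 u,\nabla^3 u$ uniform in $t\in[0,T]$ (it is exactly here that the extra derivative of regularity on the coefficients is consumed). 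Writing $h=n^{-1}$ for the step and $t_i=ih$ so that $t_n=T$, and using $X_0^n=X_0$ deterministic, the global error then telescopes along the grid:
\begin{align}
\EE f(X_n^n)-\EE f(X_T)=\sum_{i=0}^{n-1}\EE\big[\,u(t_{i+1},X_{i+1}^n)-u(t_i,X_i^n)\,\big].
\end{align}

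Next I would analyse the one-step error. Fixing $i$, conditioning on $X_i^n$ and Taylor-expanding $u(t_{i+1},X_{i+1}^n)$ in space around $X_i^n$ to third order and in time to first order — with $\Delta_i\coloneqq X_{i+1}^n-X_i^n$ — the leading terms are $h\,\partial_t u(t_i,X_i^n)+\nabla u(t_i,X_i^n)\cdot\Delta_i+\tfrac12\nabla^2 u(t_i,X_i^n):\Delta_i^{\otimes2}$, plus a remainder $R_i$ collecting the cubic spatial term, the mixed space--time term and the second-order time term. Taking conditional expectations and invoking Definition~\ref{def:local_consistency} to replace $\EE[\Delta_i\mid X_i^n]$ by $\mu(X_i^n)h$ and $\EE[\Delta_i^{\otimes2}\mid X_i^n]$ by $\Sigma(X_i^n)h$, the leading terms collapse to $h(\partial_t u+\mathcal{L}u)(t_i,X_i^n)=0$. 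What remains to estimate is (i) the consistency defects — the pairings of $\nabla u,\nabla^2 u$ with the local-moment-matching errors, handled by Cauchy--Schwarz together with the polynomial growth of $\nabla u,\nabla^2 u$ absorbed against the a priori moment bound $\EE[\max_i|X_i^n|^{2q}]\le c(1+|X_0|^{2q})$, contributing $O(h\sqrt{\delta(n)})$ per step; and (ii) the remainder $R_i$ — the cubic term bounded by H\"older using $\EE|\Delta_i|^6\le\delta(n)n^{-2}$ and the polynomial weight from $\nabla^3 u$, the other terms using $\EE|\Delta_i|^2\lesssim h$, contributing $O(h\,n^{-1/2})$ per step (and also $O(h\sqrt{\delta(n)})$ when that is sharper). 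Summing the $n$ one-step errors yields $|\EE f(X_n^n)-\EE f(X_T)|=O(\min\{\sqrt{\delta(n)},n^{-1/2}\})$ with constant depending only on $x,\mu,\sigma,f$; the $n^{-1/2}$ ceiling is the price of matching only the first two moments.

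The hard part will be the uniform control of the polynomial-growth weights: every remainder estimate carries a factor $(1+|X_i^n|^{q'})$ inherited from the growth of $u$ and its derivatives, and turning this into a genuine bound requires H\"older inequalities against the moment hypotheses — which is precisely what the sixth-moment assumption $\EE|X_{i+1}^n-X_i^n|^6\le\delta(n)n^{-2}$ and the uniform-in-$i$ bound on $\EE[\max_i|X_i^n|^{2q}]$ are designed to furnish — so checking that all the arising moments are finite and of the claimed order is the bulk of the work. A secondary technical point is establishing $u\in C_P^3$ uniformly in time from $\mu,\sigma\in C_b^4$ and ensuring the Taylor remainders are measurable and integrable. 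Since all of this is done in \cite[Theorem~9.7.4]{Kloeden1992}, one may also simply cite that reference.
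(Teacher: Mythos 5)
Your proposal is correct and matches the paper's treatment: the paper offers no proof of its own for Theorem~\ref{th:KP}, deferring entirely to \cite[Theorem 9.7.4]{Kloeden1992}, and your sketch accurately reproduces the standard argument behind that cited result (telescoping the weak error along the grid via $u(t,x)=\EE[f(X_T)\mid X_t=x]$, cancelling the leading terms through the backward Kolmogorov equation and local consistency, and absorbing the Taylor remainders with the sixth-moment and uniform-moment hypotheses). Nothing further is needed beyond the citation, as you yourself note.
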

Clearly for SDEs the convergence criterion \eqref{eq:KP_conv} is equivalent to the convergence criterion of Definition~\ref{def:conv}.
\begin{remark}\label{rem:weak_conv}
  Our regularity assumptions follow closely \cite{Kloeden1992} from which Theorem \ref{th:KP} is taken. 
  The literature also provides similar results under similar regularity assumptions.
  For example~\citet{Stroock2006,Ethier2009} provides general results regarding Markov processes, whereas \citet{Kushner1990,Kushner2001} generalises to controlled SDEs (we return to this in Section~\ref{sec:Control and Optimal Stopping}) although none of these comes with explicit rates, as in Theorem \ref{th:KP}.
\end{remark}

\paragraph{Lattice-Tree models by Recombination.}\label{sec: building tree models}
We construct a lattice approximation $X^n$ to $X$ by specifying the transition matrix $P^{(n)}_{x,y} \coloneqq \Prob(X_{i+1}^n=y|X_i^n=x)$ inductively, by iterating over $i$.
Let $X_0=x$ a.s.,  %
and define $X^n_0\coloneqq x$.
Now assume that we can construct a lattice-valued random variable $Y_x$ such that 
\begin{align}\label{eq:increment moments}
\EE[Y_x] &\approx {\mu(x)}{n^{-1}},\quad\quad 
 \EE[Y_x^{\otimes 2}] \approx {\mu(x)^{\otimes 2}}{n^{-2}}+{\Sigma(x)}{n^{-1}}. 
\end{align}
We then set $P_{x,y}^{(n)} \coloneqq \Prob(Y_x=y-x)$ which determines the row $P^{(n)}_{x,:}$.
For each $y$ such that $P^{(n)}_{x,y}\neq 0$ we then repeat the same procedure for the row $P^{(n)}_{y,:}$, see Algorithm~\ref{algo:tree_recomb}.
If additionally each $Y_x$ has a small support and the support is constrained on a lattice $\gamma_nQ\Z^d$, with $Q$ denoting a $d\times d$-matrix, then it follows that the transition matrix $P^{(n)}$ is sparse and that 
\begin{align}\label{eq:two moments}
  \EE[X^n_1 - X^n_0|X^n_0=x]  &\approx {\mu(x)}{n^{-1}},\quad\,\,\,
  \EE[(X^n_1 - X^n_0)^{\otimes 2}|X^n_0=x] \approx {\mu(x)^{\otimes 2}}{n^{-2}}+{\Sigma(x)}{n^{-1}}, 
\end{align}
with $X^n$ a MC that has the lattice $Q\gamma\Z^d$ as its state space. 
So far we have made the strong assumption that the lattice-valued random variable $Y_x$ exists: 
this is not obvious, since formally this means to solve the non-linear and lattice-constrained system  
\begin{align}\label{eq:fundamental_system}
  \begin{cases}
    \sum_{i=1}^{r}p_{i}l_{i}\approx {\mu(x)}{n^{-1}}\\
    \sum_{i=1}^{r}p_{i}l_{i}^{\otimes2}\approx {\mu(x)^{\otimes 2}}{n^{-2}}+{\Sigma(x)}{n^{-1}}%
  \end{cases}\\\text{subject to}:p_{k}\ge0,\quad\sum_{k=1}^{r}p_{k}=1,\quad l_{i}\in\gamma Q\Z^{d}, 
\end{align}
where $\sum_ip_i \delta_{l_i}$ denotes the law of the increment $Y_i$.
The main result of the following sections is to show that -- perhaps surprisingly -- this is in general possible, and it can be done efficiently in terms of rate of convergence as well as size of the state space of $X^n$. 
In order to show the existence of such a lattice-valued random variable $Y_x$ that has small support $r$ and that approximately matches the first two moments \eqref{eq:two moments}, we make use of a classic theorem of Carath\'eodory.
\begin{theorem}[Carath\'eodory~\cite{DeLoera2013} ]\label{th:cath}
  Let $\{x_i\}_{i=1}^N$ be a set of $N$ points in $\R^n$ and $N > n+1$.
  Any point $z$ that lies in the convex hull of these $N$ points, 
  $z$ can be expressed as a convex combination of maximum $n+1$ points, 
  i.e.~$z=\sum_{j=1}^{n+1}p_j x^\star_j$, where $x^\star_j\in\{x_i\}_{i=1}^N$ and $0\leq p_i\leq 1$, $\sum_i p_i=1$ .
\end{theorem}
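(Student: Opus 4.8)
The plan is to prove Theorem~\ref{th:cath} by a \emph{descent on the size of the support}: I start from an arbitrary representation of $z$ as a convex combination of the given points and, whenever it involves more than $n+1$ of them, I exhibit another representation using strictly fewer points; iterating terminates at a representation with at most $n+1$ points.

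First I would pick, among all ways of writing $z = \sum_{j} p_j x_{i_j}$ with distinct indices $i_j$, all $p_j>0$, and $\sum_j p_j = 1$, one with the smallest number $k$ of terms --- this is possible since $z$ lies in the convex hull of $\{x_i\}_{i=1}^N$, so at least one such representation exists and we may delete any zero weights. Suppose, for contradiction, $k \ge n+2$. Then the $k-1$ vectors $x_{i_1}-x_{i_k},\dots,x_{i_{k-1}}-x_{i_k}$ lie in $\R^n$ and, since $k-1 \ge n+1 > n$, are linearly dependent: there are scalars $\lambda_1,\dots,\lambda_{k-1}$, not all zero, with $\sum_{j=1}^{k-1}\lambda_j\,(x_{i_j}-x_{i_k}) = 0$. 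Setting $\lambda_k := -\sum_{j=1}^{k-1}\lambda_j$ produces $\lambda_1,\dots,\lambda_k$, not all zero, with $\sum_{j=1}^{k}\lambda_j x_{i_j} = 0$ and $\sum_{j=1}^{k}\lambda_j = 0$.

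The key step is the perturbation $p_j \mapsto p_j - t\lambda_j$. Because $\sum_j\lambda_j = 0$ and not all $\lambda_j$ vanish, the index set $J^{+} := \{j : \lambda_j > 0\}$ is nonempty; set $t^\star := \min_{j\in J^{+}} p_j/\lambda_j$, attained at some $j_0$. One checks directly that $p_j - t^\star\lambda_j \ge 0$ for every $j$ (immediate when $\lambda_j \le 0$ since $t^\star > 0$, and when $\lambda_j > 0$ it follows from $t^\star \le p_j/\lambda_j$), that $p_{j_0} - t^\star\lambda_{j_0} = 0$, and that $\sum_j (p_j - t^\star\lambda_j) = \sum_j p_j - t^\star\sum_j\lambda_j = 1$. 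Hence $z = \sum_{j=1}^{k}(p_j - t^\star\lambda_j)\,x_{i_j}$ is a convex combination of the original points using at most $k-1$ of them, contradicting the minimality of $k$. Therefore $k \le n+1$; and if $k < n+1$ one trivially pads with further points of the set carrying zero weight to reach the stated form. The argument is entirely elementary; the only point requiring a little care is the sign bookkeeping in the choice of $t^\star$, together with the elementary but crucial observation that $\sum_j\lambda_j = 0$ with $\lambda \neq 0$ forces some $\lambda_j$ to be strictly positive. I do not expect any genuine obstacle beyond the dimension count that yields the linear dependence.
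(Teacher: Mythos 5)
Your proof is correct: it is the standard descent argument for Carath\'eodory's theorem (take a minimal-support representation, extract an affine dependence among $k\ge n+2$ points, and perturb the weights by $t^\star$ to annihilate one of them), and every step --- the positivity of $t^\star$, the sign bookkeeping, the preservation of $\sum_j p_j=1$, and the final padding with zero weights allowed by the statement --- checks out. Note that the paper does not prove this theorem at all; it simply cites it as a classical result from \cite{DeLoera2013}, so your argument supplies a complete self-contained proof where the paper relies on a reference.
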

Section~\ref{sec: Models for One-Dimensional Diffusions} discusses this for $d=1$ and Section~\ref{sec: Models for Two-Dimensional Diffusions}  resp.~Section~\ref{sec: Models for Multi-Dimensional Diffusions} for $d=2$ resp.~$d\ge 3$.

\section{Lattice-Tree Models for One-Dimensional Diffusions}\label{sec: Models for One-Dimensional Diffusions}
We now apply the methodology outlined at the end of Section~\ref{sec: main construction} to one-dimensional SDEs.
The first step 
establishes the existence of a random variable $Y$ that matches (or closely approximates) any given first two moments and is additionally supported on a lattice.
The second step 
uses this random variable to define the MC. 
\subsection*{Matching the Moments}%
For $\gamma>0$, $\ma\in\R$ denote 
\begin{align}
\floor{\ma}_{\gamma}:=&\max\{\gamma z,z\in\Z\,\,s.t.\,\,\gamma z\le\ma\},\quad\quad
\ceil{\ma}_{\gamma}:=\min\{\gamma z,z\in\Z\,\,s.t.\,\,\gamma z>\ma\}.
\end{align}
If $\gamma=1$ we suppress the subscript $\gamma$ and the above reduces to the standard notation for the ceiling and floor functions. 
\begin{theorem}\label{th:t_d=1}
Let $\ma,\vb \in\R$, $\vb\ge0$, and $\gamma>0$. 
Then there exists a random variable $Y$ such that
\begin{enumerate}
\item $ \E[Y]=\ma \text{ and } |\EE [Y^{ 2}] - \ma^2- \vb^2| \le \frac{\gamma^2}{4},$
\item $\operatorname{supp}(Y) \subset \{ y \in \gamma \Z : |y| \le \sqrt{a^2+b^2}+\gamma \} .$
\end{enumerate}
Moreover, if  
\begin{align}\label{eq: condition variance}
  \ma^{2}+\vb^{2}\ge(\left\lceil \ma \right\rceil _{\gamma}^{2}-\left\lfloor \ma \right\rfloor _{\gamma}^{2})(\ma-\left\lfloor \ma \right\rfloor _{\gamma}) \gamma^{-1}+\left\lfloor \ma \right\rfloor _{\gamma}^{2},
\end{align}
then one can additionally assume that $\EE[ Y^{ 2}] = \ma^2 + \vb^2$.
\end{theorem}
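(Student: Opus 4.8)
The plan is to realise $Y$ as a mixture of two elementary two-point lattice distributions: an \emph{inner} one supported on the two lattice points $\lfloor a\rfloor_\gamma\le a<\lceil a\rceil_\gamma$ straddling $a$, and an \emph{outer} one supported symmetrically on $\{-v,v\}$ for a lattice point $v$ just above $\sqrt{a^2+b^2}$. I would set $v:=\lceil\sqrt{a^2+b^2}\rceil_\gamma$, so that $v\in\gamma\mathbb{Z}$ and $\sqrt{a^2+b^2}<v\le\sqrt{a^2+b^2}+\gamma$.

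First I would analyse the inner distribution $\nu_{\mathrm{in}}$, which places mass $p:=(a-\lfloor a\rfloor_\gamma)\gamma^{-1}$ on $\lceil a\rceil_\gamma$ and mass $1-p$ on $\lfloor a\rfloor_\gamma$ (note $p\in[0,1)$ since $0\le a-\lfloor a\rfloor_\gamma<\gamma$). A one-line computation shows that under $\nu_{\mathrm{in}}$ one has $\mathbb{E}[Y]=a$ and $\mathbb{E}[Y^2]=S_0$, where $S_0:=(\lceil a\rceil_\gamma^2-\lfloor a\rfloor_\gamma^2)(a-\lfloor a\rfloor_\gamma)\gamma^{-1}+\lfloor a\rfloor_\gamma^2$ is precisely the right-hand side of \eqref{eq: condition variance}; moreover, since the two atoms are at distance $\gamma$ and $a$ lies between them, the variance is $S_0-a^2=p(1-p)\gamma^2$, so $a^2\le S_0\le a^2+\gamma^2/4$. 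For the outer distribution $\nu_{\mathrm{out}}$ on $\{-v,v\}$ I would choose the masses so that the mean equals $a$; this is legitimate because $|a|\le\sqrt{a^2+b^2}<v$, and then under $\nu_{\mathrm{out}}$ one has $\mathbb{E}[Y^2]=v^2\ge a^2+b^2$.

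Then I would split into two cases. If \eqref{eq: condition variance} holds, i.e. $a^2+b^2\ge S_0$, let $Y$ have law $q\,\nu_{\mathrm{in}}+(1-q)\,\nu_{\mathrm{out}}$ with $q:=(v^2-a^2-b^2)/(v^2-S_0)$; since $S_0\le a^2+b^2\le v^2$ we get $q\in[0,1]$ (in the degenerate case $v^2=S_0$ all three quantities coincide and one simply takes $\nu_{\mathrm{out}}$). By linearity, $\mathbb{E}[Y]=a$ and $\mathbb{E}[Y^2]=qS_0+(1-q)v^2=a^2+b^2$ exactly, which proves the ``moreover'' assertion, and \emph{a fortiori} part~1; part~2 follows because $\operatorname{supp}(Y)\subseteq\{-v,\lfloor a\rfloor_\gamma,\lceil a\rceil_\gamma,v\}$ and each of these lattice points has absolute value at most $\sqrt{a^2+b^2}+\gamma$ (using $|\lfloor a\rfloor_\gamma|\le|a|+\gamma$, $|\lceil a\rceil_\gamma|\le|a|+\gamma$, $v\le\sqrt{a^2+b^2}+\gamma$, and $|a|\le\sqrt{a^2+b^2}$). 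If instead \eqref{eq: condition variance} fails, then $b^2=(a^2+b^2)-a^2<S_0-a^2\le\gamma^2/4$; take $Y\sim\nu_{\mathrm{in}}$, so that $\mathbb{E}[Y]=a$ and $|\mathbb{E}[Y^2]-a^2-b^2|=S_0-(a^2+b^2)\le S_0-a^2\le\gamma^2/4$, while the support bound holds as before.

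The one point that needs care — and the reason for the explicit inner/outer decomposition rather than a black-box appeal to Carathéodory's theorem (Theorem~\ref{th:cath}) applied to the moment curve $t\mapsto(t,t^2)$, $t\in\gamma\mathbb{Z}$ — is the support constraint of part~2: Carathéodory would express $(a,a^2+b^2)$ using three lattice atoms but with no bound on how far out they lie, whereas the construction above keeps every atom inside the prescribed ball of radius $\sqrt{a^2+b^2}+\gamma$ while still matching both moments exactly whenever the piecewise-linear lower envelope value $S_0$ of $t\mapsto t^2$ at $t=a$ permits it. Everything else is routine: checking that the quoted masses are genuine probabilities, verifying the two moment identities, and the floor/ceiling estimates $a-\gamma<\lfloor a\rfloor_\gamma\le a<\lceil a\rceil_\gamma\le a+\gamma$ together with $\sqrt{a^2+b^2}<v\le\sqrt{a^2+b^2}+\gamma$.
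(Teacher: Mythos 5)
Your construction is correct: the inner measure $\nu_{\mathrm{in}}$ has mean $a$ and second moment exactly the right-hand side $S_0$ of \eqref{eq: condition variance} with $a^2\le S_0\le a^2+\gamma^2/4$, the outer measure $\nu_{\mathrm{out}}$ on $\{\pm v\}$, $v=\lceil\sqrt{a^2+b^2}\rceil_\gamma$, is legitimate because $|a|\le\sqrt{a^2+b^2}<v$, and since $S_0\le a^2+b^2<v^2$ whenever \eqref{eq: condition variance} holds, your mixture weight $q$ is well defined in $[0,1]$ and matches both moments exactly; when \eqref{eq: condition variance} fails, taking $\nu_{\mathrm{in}}$ gives the $\gamma^2/4$ bound, and all four candidate atoms lie in $\gamma\Z$ within the prescribed radius. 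Your route differs from the paper's in the main case: the paper (after wlog $a\ge0$) splits into three cases and, in its Cases 2 and 3, only \emph{asserts} that $(a,a^2+b^2)$ lies in the convex hull of an explicit bounded four-point set on the parabola and invokes Carath\'eodory's Theorem~\ref{th:cath} to get a representing measure with at most three atoms, without exhibiting weights; your argument merges those two cases into a single explicit two-component mixture with closed-form weights, which is more constructive and makes the support bound immediate. (Your closing remark slightly mischaracterizes the paper: it does not apply Carath\'eodory to the whole moment curve but to bounded four-point sets, so its support bound also follows by construction.) What the paper's version buys is the Carath\'eodory guarantee of at most three atoms, which is quietly used later (e.g.\ the sum $\sum_{j=1}^{3}$ in \eqref{eq:d=1_ineq_assumptions_ch6}); your measure has up to four atoms, which only changes constants, and could in any case be reduced to three by one recombination step. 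Your case ``\eqref{eq: condition variance} fails'' coincides with the paper's Case~1.
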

\begin{proof}
  Wlog suppose $\ma\ge0$.
  For a discrete random variable $Y$ denote $p_i \coloneqq \Pr(Y=y_i)$. 
If  
      \begin{align}\label{eq:barycenter}
      \EE[(Y,Y^2)] = \sum_i p_i (y_i,y_i^2) = (a, m)
      \end{align}
for a $m$ such that $|m - (a^2+b^2)| \le \gamma^2/4$ and $y_i \in \gamma \Z$, then the result follows. 
We distinguish three cases, depending on the position of $(a,a^2+b^2)$ in relation to the points $(\ceil{a}_\gamma, \ceil{a}_\gamma^2)$, $(\floor{a}_\gamma, \floor{a}_\gamma^2)$, as depicted in Figure~\ref{fig:proof_main_th}.   
Formally,%
\begin{align}\label{eq:cases}
  \begin{cases}
\text{Case 1}\,\,  : \ma^{2}\leq\ma^{2}+\vb^{2}\leq\frac{\left\lceil \ma\right\rceil_{\gamma}^{2}-\left\lfloor \ma\right\rfloor_{\gamma}^{2}}{\left\lceil \ma\right\rceil_{\gamma}-\left\lfloor \ma\right\rfloor_{\gamma}}(\ma-\left\lfloor \ma\right\rfloor_{\gamma})+\left\lfloor \ma\right\rfloor_{\gamma}^{2},\\ 
\text{Case 2}\,\, :
  \frac{\left\lceil \ma\right\rceil_{\gamma}^{2}-\left\lfloor \ma\right\rfloor_{\gamma}^{2}}{\left\lceil \ma\right\rceil_{\gamma}-\left\lfloor \ma\right\rfloor_{\gamma}}(\ma-\left\lfloor \ma\right\rfloor_{\gamma})+\left\lfloor \ma\right\rfloor_{\gamma}^{2}\leq\ma^{2}+\vb^{2}\leq\left\lceil \ma\right\rceil_{\gamma}^{2},\\
\text{Case 3}\,\,  : \left\lceil \ma\right\rceil_{\gamma}^{2}\leq\ma^{2}+\vb^{2}. 
  \end{cases}
\end{align}
\begin{figure}[ht]
\centering
\includegraphics[width=0.37\textwidth]{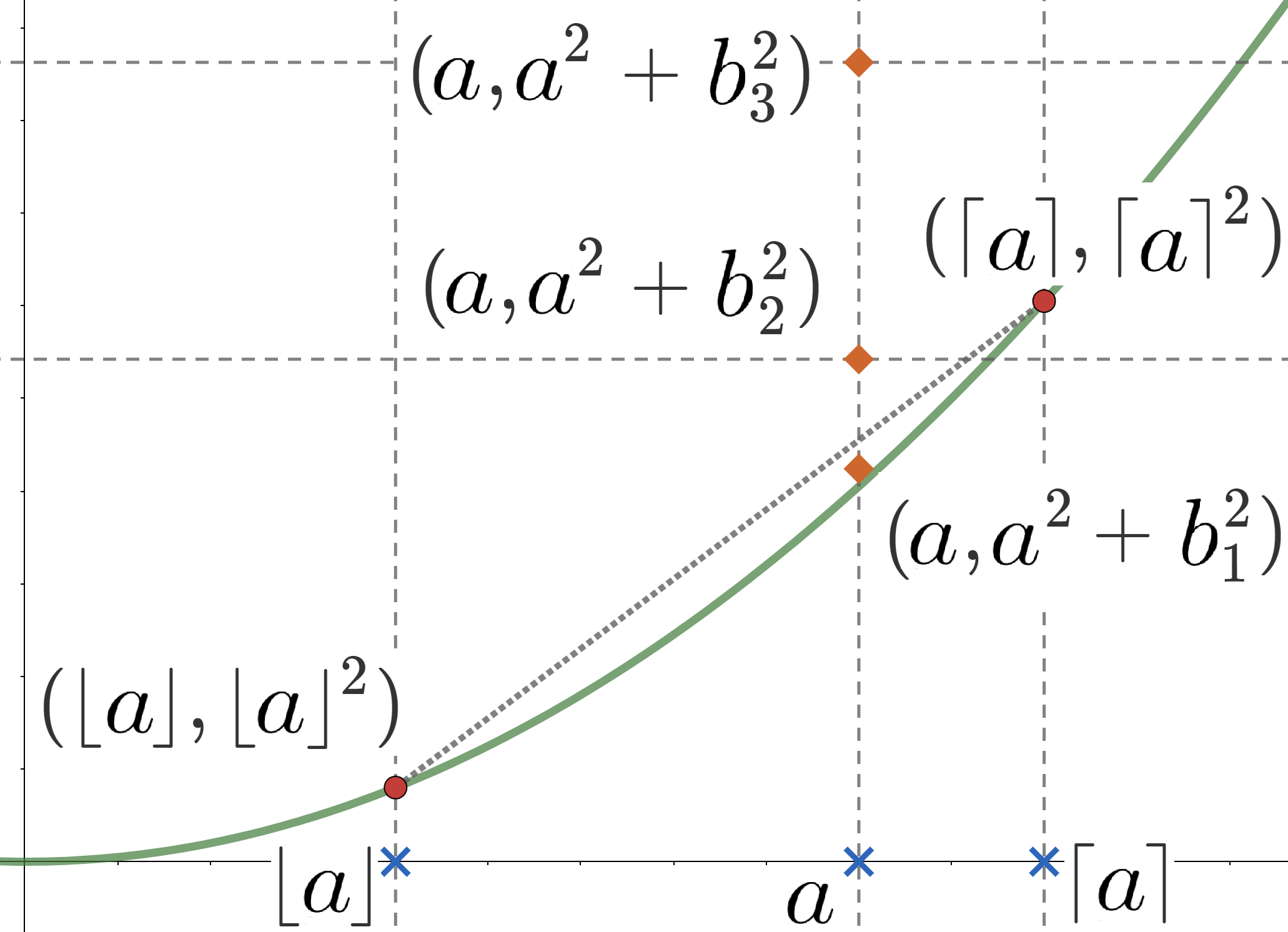}
\caption[Proof of Theorem~\ref{th:t_d=1}.]{Proof of Theorem~\ref{th:t_d=1}. $\vb_i$ represents $b$ under the hypothesis of Case $i$.}\label{fig:proof_main_th}
\end{figure}

\noindent\textbf{Case 1}. %
We claim that we can choose $y_1=\left\lfloor \ma\right\rfloor_{\gamma}$, $y_2=\left\lceil \ma\right\rceil_{\gamma}$, $p_1=\frac{\ma-\left\lfloor \ma\right\rfloor_{\gamma}}{\left\lceil \ma\right\rceil_{\gamma}-\left\lfloor \ma\right\rfloor_{\gamma}}$ and $p_2=1-p_1$.
This gives $\EE[ Y]=\ma$ and the bound on $|\EE Y^2 - \ma^2-\vb^2 |$ follows
from a direct calculation: 
\begin{align}
\EE [Y^{2}]=&\frac{\ma-\left\lfloor \ma\right\rfloor_{\gamma}}{\left\lceil \ma\right\rceil_{\gamma}-\left\lfloor \ma\right\rfloor_{\gamma}}\left\lceil \ma\right\rceil_{\gamma}^{2}+\left(1-\frac{\ma-\left\lfloor \ma\right\rfloor_{\gamma}}{\left\lceil \ma\right\rceil_{\gamma}-\left\lfloor \ma\right\rfloor_{\gamma}}\right)\left\lfloor \ma\right\rfloor_{\gamma}^{2}=\frac{\ma-\left\lfloor \ma\right\rfloor_{\gamma}}{\gamma}(\left\lceil \ma\right\rceil_{\gamma}^{2}-\left\lfloor \ma\right\rfloor_{\gamma}^{2})+\left\lfloor \ma\right\rfloor_{\gamma}^{2}\\=&\frac{\ma-\left\lfloor \ma\right\rfloor_{\gamma}}{\gamma}((\left\lfloor \ma\right\rfloor_{\gamma}+\gamma)^{2}-\left\lfloor \ma\right\rfloor_{\gamma}^{2})+\left\lfloor \ma\right\rfloor_{\gamma}^{2}=(\ma-\left\lfloor \ma\right\rfloor_{\gamma})(\gamma+2\left\lfloor \ma\right\rfloor_{\gamma})+\left\lfloor \ma\right\rfloor_{\gamma}^{2}.
\end{align}
Assuming that $\vb$ satisfies the inequalities~\eqref{eq:cases}-Case 1, results in 
\begin{align}
\EE [Y^{2}]-\ma^{2}-\vb^{2}=&(\ma-\left\lfloor \ma\right\rfloor_{\gamma})(\gamma+2\left\lfloor \ma\right\rfloor_{\gamma})+\left\lfloor \ma\right\rfloor_{\gamma}^{2}-\ma^{2}-\vb^{2}\\\geq&(\ma-\left\lfloor \ma\right\rfloor_{\gamma})(\gamma+2\left\lfloor \ma\right\rfloor_{\gamma})-\frac{\left\lceil \ma\right\rceil_{\gamma}^{2}-\left\lfloor \ma\right\rfloor_{\gamma}^{2}}{\gamma}(\ma-\left\lfloor \ma\right\rfloor_{\gamma})\\\geq&(\ma-\left\lfloor \ma\right\rfloor_{\gamma})(\gamma+2\left\lfloor \ma\right\rfloor_{\gamma})-(\gamma+2\left\lfloor \ma\right\rfloor_{\gamma})(\ma-\left\lfloor \ma\right\rfloor_{\gamma})\geq0.
\end{align}
Denoting $\theta:=(\ma-\left\lfloor \ma\right\rfloor_{\gamma})/\gamma\in[0,1]$ we write 
\begin{align}
\EE[ Y^{2}]-\ma^{2}-\vb^{2}=&(\ma-\left\lfloor \ma\right\rfloor_{\gamma})(\gamma+2\left\lfloor \ma\right\rfloor_{\gamma})+\left\lfloor \ma\right\rfloor_{\gamma}^{2}-\ma^{2}-\vb^{2}\\\leq&(\ma-\left\lfloor \ma\right\rfloor_{\gamma})(\gamma+2\left\lfloor \ma\right\rfloor_{\gamma})+\left\lfloor \ma\right\rfloor_{\gamma}^{2}-\ma^{2}\\\leq&\theta\gamma(\gamma+2\left\lfloor \ma\right\rfloor_{\gamma})+\left\lfloor \ma\right\rfloor_{\gamma}^{2}-(\left\lfloor \ma\right\rfloor_{\gamma}+\theta\gamma)^{2}\leq\theta\gamma^{2}(1-\theta)\leq\frac{\gamma^{2}}{4}. 
\end{align}

\noindent
\textbf{Case 2}. %
The point $(\ma, \ma^2+\vb^2)$ is contained in the convex hull spanned by the four points\footnote{Other solutions can exist.}
  \begin{align}\label{eq:convex hull}
    \{(y,y^2): y \in \{\pm \ceil{\ma}_\gamma,\pm \floor{\ma}_\gamma\}\}. 
  \end{align}
By Carath\'eodory's Theorem~\ref{th:cath}, $(\ma, \ma^2+\vb^2)$ is a convex combination of three points of the set \eqref{eq:convex hull}. 
Hence, \eqref{eq:barycenter} holds with $m=\ma^2+\vb^2$. 

\noindent\textbf{Case 3}. %
Since $\ma^2+\vb^2 \le \ceil*{ \sqrt{\ma^2+\vb^2 } }_\gamma^2 $ the point $(\ma, \ma^2+\vb^2)$ is contained in the convex hull spanned by the four points   
$
  \{(y,y^2) : y \in \{\pm \ceil{a}_\gamma, \pm \ceil*{ \sqrt{a^2+b^2 } }^2_\gamma\} \}, 
$
and by Carath\'eodory's Theorem~\ref{th:cath} we can conclude.

To finish the proof, we need the bounds on the support of the r.v. $Y$, however it is sufficient to note that this is true by construction. 
\end{proof}

For special cases of $\ma,\vb$ one gets stronger results (with simpler proofs).  
\begin{corollary}\label{cor:mu=0_dim1}
  Let $\vb\ge 0$, and $\gamma>0$.
  Then there exists a random variable $Y$ with
  \begin{enumerate}
  \item $\EE[Y] =0 $ and $ \EE [Y^{ 2} ]= \vb^2$, 
  \item $\operatorname{supp}(Y) \subset \{y \in\gamma \Z\text{ : } | y |\leq  \vb + \gamma \}$.
  \end{enumerate}
\end{corollary}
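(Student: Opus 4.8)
The plan is to avoid Carath\'eodory entirely and write down an explicit law: since the target mean is $0$, a symmetric three‑point distribution on $\{-c,0,c\}$ for an appropriate $c\in\gamma\Z$ has mean zero for free, so only the second moment has to be matched, and that amounts to a single linear equation in the mass placed on $\pm c$.

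Concretely, I would set $c:=\ceil{\vb}_\gamma$, the smallest multiple of $\gamma$ strictly exceeding $\vb$; thus $c\in\gamma\Z$, $c>0$, and $\vb<c\le\vb+\gamma$. Define $p:=\vb^{2}/(2c^{2})$ and let $Y$ take the value $c$ with probability $p$, the value $-c$ with probability $p$, and the value $0$ with probability $1-2p$. First I would check that this is a genuine probability distribution: from $0\le\vb<c$ one gets $0\le p<1/2$, so the three masses lie in $[0,1]$ and sum to $1$ (the degenerate case $\vb=0$ gives $c=\gamma$, $p=0$, $Y\equiv 0$). Then $\EE[Y]=pc+p(-c)+0=0$ by symmetry, $\EE[Y^{2}]=pc^{2}+pc^{2}=2pc^{2}=\vb^{2}$, and $\supp(Y)=\{-c,0,c\}\subset\{y\in\gamma\Z:|y|\le\vb+\gamma\}$ because $|\pm c|=c\le\vb+\gamma$. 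This establishes both items.

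There is no real obstacle here: the only points that need care are the convention that $\ceil{\cdot}_\gamma$ is a \emph{strict} ceiling (this is precisely what forces $p<1/2$, hence makes $Y$ well defined, and also handles $\vb=0$ uniformly) and the elementary estimate $\ceil{\vb}_\gamma\le\vb+\gamma$. As an alternative derivation I would remark that the corollary is just the case $\ma=0$ of Theorem~\ref{th:t_d=1}: when $\ma=0$ the right‑hand side of~\eqref{eq: condition variance} is $0$, so the extra hypothesis reads $\vb^{2}\ge 0$ and holds trivially, and Theorem~\ref{th:t_d=1} then yields exactly a $Y$ with $\EE[Y]=0$, $\EE[Y^{2}]=\vb^{2}$, and $\supp(Y)\subset\{y\in\gamma\Z:|y|\le\vb+\gamma\}$.
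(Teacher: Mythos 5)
Your proposal is correct and is essentially the paper's own argument: the paper also takes the smallest positive lattice point $\ma_{++}$ with $\ma_{++}>\vb$ and puts mass $\vb^{2}/(2\ma_{++}^{2})$ on $\pm\ma_{++}$ and the remainder on $0$, which is exactly your three-point law with $c=\ceil{\vb}_{\gamma}$. Your extra checks ($p<1/2$, the case $\vb=0$, and the remark that the statement also follows from Theorem~\ref{th:t_d=1} with $\ma=0$) are correct but only spell out details the paper leaves implicit.
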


\begin{proof}
Let us call $\ma_{++}=\min\{\gamma z>0, z\in\Z\text{ s.t. }\gamma^2z^2>\vb^2\}$, then one possible solution (but not the only one) is the r.v. $Y$ with support on $\pm\ma_{++}$ with probability $\vb^2/(2\ma_{++}^2)$ and $0$ with probability $1-\vb^2/\ma_{++}^2$.
\end{proof}

\begin{corollary}
  Let $\ma, \vb\in\R$, $\vb\ge 0$, and $\gamma>0$.
  Then there exists a r.v. $Y$ with 
  \begin{enumerate}
  \item$ \EE [Y] = \ma $ and $ \EE [Y^2] =\ma^2 + \vb^2 $;
  \item $\operatorname{supp}(Y) \subset \{y \in\gamma \Z\cup\{\ma\}\text{ : } | y |\leq  \sqrt{\ma^2 + \vb^2} + \gamma \}$.
  \end{enumerate}
\end{corollary}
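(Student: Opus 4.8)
The plan is to dichotomize on the extra hypothesis \eqref{eq: condition variance} of Theorem~\ref{th:t_d=1}. By replacing $Y$ with $-Y$ at the end, I would first assume $\ma\ge 0$. Introduce the two-point lattice random variable $Y_0$ supported on $\{\floor{\ma}_\gamma,\ceil{\ma}_\gamma\}$ with $\Pr(Y_0=\ceil{\ma}_\gamma)=(\ma-\floor{\ma}_\gamma)/\gamma$ and $\Pr(Y_0=\floor{\ma}_\gamma)=(\ceil{\ma}_\gamma-\ma)/\gamma$; this is exactly the interpolant from Case~1 in the proof of Theorem~\ref{th:t_d=1}, for which $\EE[Y_0]=\ma$ and $\EE[Y_0^2]=m_0$ with $m_0\coloneqq(\ma-\floor{\ma}_\gamma)\gamma^{-1}(\ceil{\ma}_\gamma^2-\floor{\ma}_\gamma^2)+\floor{\ma}_\gamma^2$. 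Since $\ceil{\ma}_\gamma-\floor{\ma}_\gamma=\gamma$, the right-hand side of \eqref{eq: condition variance} is precisely $m_0$, so that condition reads $\ma^2+\vb^2\ge m_0$.

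If $\ma^2+\vb^2\ge m_0$, I would simply invoke the final assertion of Theorem~\ref{th:t_d=1}, which already gives a random variable supported on $\{y\in\gamma\Z:|y|\le\sqrt{\ma^2+\vb^2}+\gamma\}$ with $\EE[Y]=\ma$ and $\EE[Y^2]=\ma^2+\vb^2$; since $\gamma\Z\subset\gamma\Z\cup\{\ma\}$ this is already of the required form and nothing more is needed.

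If instead $\ma^2+\vb^2<m_0$, I would build $Y$ as a mixture of $Y_0$ with an atom at $\ma$. The key elementary fact is $m_0=\EE[Y_0^2]\ge(\EE[Y_0])^2=\ma^2$, so with $\vb\ge 0$ the strict inequality forces $m_0>\ma^2$ and $\lambda\coloneqq\vb^2/(m_0-\ma^2)$ is a genuine probability in $[0,1)$. Setting $Y=Y_0$ with probability $\lambda$ and $Y=\ma$ with probability $1-\lambda$ gives $\EE[Y]=\lambda\ma+(1-\lambda)\ma=\ma$ and $\EE[Y^2]=\lambda m_0+(1-\lambda)\ma^2=\ma^2+\lambda(m_0-\ma^2)=\ma^2+\vb^2$, which is item~(1). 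For item~(2), $\operatorname{supp}(Y)\subseteq\{\floor{\ma}_\gamma,\ceil{\ma}_\gamma,\ma\}\subset\gamma\Z\cup\{\ma\}$, and from $0\le\floor{\ma}_\gamma\le\ma\le\sqrt{\ma^2+\vb^2}$ together with $\ceil{\ma}_\gamma=\floor{\ma}_\gamma+\gamma\le\ma+\gamma$, every atom has modulus at most $\sqrt{\ma^2+\vb^2}+\gamma$. Undoing the normalization $\ma\ge 0$ only reflects $Y$ about the origin, which preserves $\EE[Y]=\ma$, $\EE[Y^2]=\ma^2+\vb^2$ and the support bound.

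I do not expect a genuine obstacle: the construction is essentially forced once one observes that the threshold in \eqref{eq: condition variance} is exactly the second moment $m_0$ of the canonical two-point lattice interpolant of $\ma$. Above that threshold Theorem~\ref{th:t_d=1} works on $\gamma\Z$ alone, while below it the second moments \emph{not} attainable by a mean-$\ma$ lattice-supported law are precisely those smaller than $m_0$, and these are reached by diluting $Y_0$ with the atom at $\ma$, which drives $\EE[Y^2]$ linearly down to $\ma^2$ without disturbing the mean. The only point needing a (trivial) argument is $m_0\ge\ma^2$, guaranteeing $\lambda\le 1$.
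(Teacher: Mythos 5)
Your proof is correct, but it proceeds differently from the paper. The paper disposes of this corollary in one line: it adds the two points $(\pm\ma,\ma^2)$ to the four lattice points $(\pm\floor{\ma}_\gamma,\floor{\ma}_\gamma^2)$, $(\pm\ceil{\ma}_\gamma,\ceil{\ma}_\gamma^2)$ together with $(\pm\ceil{\sqrt{\ma^2+\vb^2}}_\gamma,\ceil{\sqrt{\ma^2+\vb^2}}_\gamma^2)$, asserts (with reference to Figure~\ref{fig:proof_main_th}) that $(\ma,\ma^2+\vb^2)$ lies in the convex hull of these points on the parabola, and invokes Carath\'eodory's Theorem~\ref{th:cath} to extract a representing measure. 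You instead split on whether the threshold \eqref{eq: condition variance} holds, correctly identifying its right-hand side as the second moment $m_0$ of the canonical two-point bracket $Y_0$ of $\ma$ on $\gamma\Z$: above the threshold you reuse the ``Moreover'' clause of Theorem~\ref{th:t_d=1} verbatim (support in $\gamma\Z$, both moments exact), and below it you dilute $Y_0$ with an atom at $\ma$, with mixing weight $\lambda=\vb^2/(m_0-\ma^2)$, whose admissibility you justify via $m_0>\ma^2+\vb^2\ge\ma^2$. What your route buys: an entirely explicit measure with at most three atoms and closed-form weights, no appeal to the figure or to a convex-hull membership claim, and a support that genuinely stays inside $\gamma\Z\cup\{\ma\}$ near $\ma$ --- note that the paper's hull argument works with the reflected points $-\ma$, $-\floor{\ma}_\gamma$, etc., so a Carath\'eodory representation could a priori charge $-\ma$, which is not literally an element of $\gamma\Z\cup\{\ma\}$; your construction sidesteps this. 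What the paper's route buys is brevity and uniformity (no case split), at the cost of leaving the hull membership and the choice of representing points implicit. Your reduction to $\ma\ge0$ and the reflection $Y\mapsto-Y$ at the end are handled correctly, since $-\gamma\Z=\gamma\Z$ and both moments and the modulus bound are invariant under reflection.
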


\begin{proof}
It should be clear from Figure~\ref{fig:proof_main_th} that adding $\ma$ to the lattice $\gamma\Z$, the convex hull of the $6$ points 
$(\pm\lfloor \ma \rfloor_\gamma, \lfloor \ma \rfloor_\gamma^2), 
(\pm \ma, \ma^2), 
(\pm\lceil \ma \rceil_\gamma, \lceil \ma \rceil_\gamma^2), 
(\pm \lceil \sqrt{\ma^2+\vb^2}\rceil_\gamma, \lceil \sqrt{\ma^2+\vb^2}\rceil_\gamma^2)
$ contains the point $(\ma, \ma^2+\vb^2)$ and the Caratheodory's Theorem~\ref{th:cath} concludes the proof.
\end{proof}
\begin{remark}\label{rem:constraint_tree_recomb}
In some applications, it can be required to constrain the support of the r.v. on specific intervals, e.g. in finance the price quantity should be non-negative. 
In this case, from Figure~\ref{fig:proof_main_th}, it should be clear that one could proceed looking for the smallest point $\gamma z_\star$ such that the point $(\ma, \ma^2+\vb^2)$ lays in the convex hull of the points 
$(\gamma, \gamma^2),$ 
$(\lfloor \ma \rfloor_\gamma, \lfloor \ma \rfloor_\gamma^2),$ 
$(\lceil \ma \rceil_\gamma, \lceil \ma \rceil_\gamma^2)$ and 
$(\gamma z_\star, \gamma^2 z^2_\star)$.
\end{remark}
\subsection*{Constructing the Lattice-Tree Model}%
Now we are ready to build the Markov chain $X^n$ which approximates $X$ and such that the state space recombines if the SDE coefficients are bounded 
for the case $d=1$. 
We extend to models with dimension greater than $1$ in the next Sections. 
 \begin{theorem}\label{th:mm_d=1}
   Let $d=1$ and $\mu, \sigma \in C^4_b$, $\beta>0$ and $\sigma_{\min}^2 := \inf_x \sigma^2(x)$.
   Set %
   \begin{align}
     \gamma_{n}\coloneqq 
     \begin{cases}
2n^{-1/2}\sigma_{\min} & \text{, if }\sigma_{\min}>0,\\
\sqrt{n^{-1-\beta}}, & \text{, otherwise}.
\end{cases}
   \end{align}
   Then there exists a lattice approximation $(X^n)_{n \ge 1}$ to $X$  on $X_0 \cup \gamma_n\Z$  respect to the class of functions $f\in C^3_P$ with rate 
   $1/2$ if $\sigma_{\min}>0$, or $\min(\beta,\frac{1}{2})$ otherwise. 
   Moreover
   \begin{align}
     \begin{cases}
      \operatorname{card}[\operatorname{supp}(X_{i}^{n})]\leq\frac{icn^{-1/2}}{\gamma_{n}}+ic &\text{, if }\mu,\sigma \text{ are bounded,}\\
      \operatorname{card}[\operatorname{supp}(X_{i}^{n})]\leq\frac{i(cn^{-1/2}+\gamma_{n})\exp(cin^{-1/2})}{\gamma_{n}}& \text{, if }\mu,\sigma \text{ have linear growth}.
    \end{cases}
     \end{align}
     Note that, if $\mu, \sigma$ are bounded and $\sigma_{\min}>0$, then $\operatorname{card}[\operatorname{supp}(X_{i}^{n})]\le ic$, i.e. $X^n$ recombines.
 \end{theorem}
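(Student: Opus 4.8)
The plan is to instantiate the iterated‑recombination scheme of Section~\ref{sec: main construction} (Algorithm~\ref{algo:tree_recomb}) with Theorem~\ref{th:t_d=1} as the one‑step building block, then to read off weak convergence from Theorem~\ref{th:KP} and the support bounds from the support control built into Theorem~\ref{th:t_d=1}. Concretely: put $X^n_0=x$; for the first step apply Theorem~\ref{th:t_d=1} with $\ma=x+\mu(x)n^{-1}$, $\vb=\sigma(x)n^{-1/2}$, $\gamma=\gamma_n$ to get a $\gamma_n\Z$‑valued $W$ and set $X^n_1=W$; and for every state $y\in\gamma_n\Z$ apply Theorem~\ref{th:t_d=1} with $\ma=\mu(y)n^{-1}$, $\vb=\sigma(y)n^{-1/2}$, $\gamma=\gamma_n$ to get a $\gamma_n\Z$‑valued increment $Y_y$, setting $X^n_{i+1}=X^n_i+Y_{X^n_i}$. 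Each $Y_y$ is an honest probability measure (Carath\'eodory produces nonnegative weights summing to one), so the transition matrix is stochastic; by construction $X^n_i\in X_0\cup\gamma_n\Z$, so $(X^n)$ is a lattice approximation, and a one‑line computation using $\EE[W]=x+\mu(x)n^{-1}$ and $|\EE[W^2]-\ma^2-\vb^2|\le\gamma_n^2/4$ gives $\EE[X^n_1-X^n_0\mid X^n_0=x]=\mu(x)n^{-1}$ and $\EE[(X^n_1-X^n_0)^2\mid X^n_0=x]=\mu(x)^2n^{-2}+\sigma^2(x)n^{-1}+O(\gamma_n^2)$.

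For the convergence statement I verify the hypotheses of Theorem~\ref{th:KP}. By Theorem~\ref{th:t_d=1} the first moment of each increment is matched exactly, so the drift part of the local‑consistency condition in Definition~\ref{def:local_consistency} vanishes identically. For the diffusion part the decisive point is that the constant $2$ in $\gamma_n=2n^{-1/2}\sigma_{\min}$ is chosen exactly so that, when $\sigma_{\min}>0$, one has $\vb^2=\sigma^2(y)n^{-1}\ge\sigma_{\min}^2 n^{-1}=\gamma_n^2/4$, and (by the Case~1 estimate inside the proof of Theorem~\ref{th:t_d=1}) $\gamma_n^2/4$ dominates the gap between $\ma^2+\vb^2$ and the right‑hand side of \eqref{eq: condition variance}; hence \eqref{eq: condition variance} holds and $\EE[Y_y^2]=\mu(y)^2n^{-2}+\sigma^2(y)n^{-1}$ exactly. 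Therefore $\EE(\EE[(X^n_{i+1}-X^n_i)^{\otimes2}/n^{-1}\mid X^n_i]-\Sigma(X^n_i))^2=n^{-2}\EE[\mu(X^n_i)^4]=O(n^{-2})$ and $(X^n)$ is locally $\delta(n)$‑consistent with $\delta(n)\asymp n^{-1}$ (the sixth‑moment requirement below also contributes only order $n^{-1}$), which yields rate $1/2$ via Theorem~\ref{th:KP}. When $\sigma_{\min}=0$ we instead use $\gamma_n^2=n^{-1-\beta}$, the recombination defect enters the diffusion consistency error as $n\cdot\gamma_n^2/4=\tfrac14 n^{-\beta}$, so $\delta(n)\asymp n^{-\min(2\beta,1)}$ and Theorem~\ref{th:KP} gives rate $\min(\beta,1/2)$.

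The remaining two hypotheses of Theorem~\ref{th:KP} follow from the support control $\operatorname{supp}(Y_y)\subset\{z\in\gamma_n\Z:\ |z|\le\sqrt{\mu(y)^2n^{-2}+\sigma(y)^2n^{-1}}+\gamma_n\}$ of Theorem~\ref{th:t_d=1}: it gives directly $\EE[|X^n_{i+1}-X^n_i|^6]=O(n^{-3}+\gamma_n^6)=O(n^{-3})$, and for $\EE[\max_i|X^n_i|^{2q}]\le c(1+|X_0|^{2q})$ one writes $X^n_i=X_0+M_i+A_i$ with $M$ the martingale part and $A_i=\sum_{j<i}\mu(X^n_j)n^{-1}$, bounds $|A_i|$ and $\langle M\rangle_i\le\sum_{j<i}\EE[Y_{X^n_j}^2\mid X^n_j]$ uniformly for $i\le n$ (directly when $\mu,\sigma$ are bounded, via a discrete Gr\"onwall inequality under linear growth), and applies the Burkholder--Davis--Gundy inequality; the assumption $\mu,\sigma\in C^4_b$ is precisely the regularity Theorem~\ref{th:KP} asks for.

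For the support cardinality, the same bound on $|Y_y|$ shows that each step displaces $X^n$ by at most $K_n$ lattice units; when $\mu,\sigma$ are bounded $K_n\le cn^{-1/2}/\gamma_n+c$ is independent of the current state, so $\operatorname{supp}(X^n_i)$ is contained in an interval of $O(iK_n)$ lattice points, which is the first displayed bound, and when moreover $\sigma_{\min}>0$ the quantity $cn^{-1/2}/\gamma_n=c/(2\sigma_{\min})$ is constant and the bound becomes $O(i)$, i.e.\ $X^n$ recombines. Under linear growth, tracking $R_i:=\max\{|z|:z\in\operatorname{supp}(X^n_i)\}$ through $R_{i+1}\le R_i(1+cn^{-1/2})+cn^{-1/2}+\gamma_n$ and dividing by $\gamma_n$ produces the stated exponential bound. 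The crux I anticipate is the bookkeeping that ties $\gamma_n$ to $\delta(n)$: checking that the constant $2$ is exactly what forces \eqref{eq: condition variance} (so the diffusion moment is matched \emph{exactly} and the full rate $1/2$ survives), versus the degenerate branch where the unavoidable $\gamma_n^2/4$ recombination defect is what caps the rate at $\min(\beta,1/2)$; securing the uniform moment bound $\EE[\max_i|X^n_i|^{2q}]\le c(1+|X_0|^{2q})$ is the other technical point, although it is classical.
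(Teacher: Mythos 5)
Your proposal is correct and follows essentially the same route as the paper: per-step application of Theorem~\ref{th:t_d=1} (with the choice $\gamma_n=2n^{-1/2}\sigma_{\min}$ forcing condition~\eqref{eq: condition variance}, hence exact moment matching in the elliptic case, and the $\gamma_n^2/4$ defect capping the rate at $\min(\beta,1/2)$ otherwise), a shifted first step to handle $X_0\notin\gamma_n\Z$, verification of the Kloeden--Platen conditions (your direct check of Definition~\ref{def:local_consistency} plus BDG/Gr\"onwall for the maximum-moment bound is exactly the content of Lemma~\ref{lemma: convergence assumptions}, Lemma~\ref{lem:eqv_loc_cons} and Lemma~\ref{lem:maximum_scheme}), and the same per-step displacement/Gr\"onwall argument for the support cardinality. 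Your bookkeeping of how the sixth-moment constraint and the recombination defect determine $\delta(n)$ is, if anything, slightly more explicit than the paper's.
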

The proof below allows to state the same result but with the lattice approximation supported on the ``shifted lattice'' $X_0 + \gamma_n\Z$ instead of $X_0 \cup \gamma_n \Z$.
However, an advantage of the above formulation is that several diffusions can be supported on the same lattice, namely $\gamma_n \Z$ and the union of their starting points, whereas shifting the lattice would require to build a complete new lattice every time a diffusion is added.   
We prepare the proof of Theorem~\ref{th:mm_d=1} with the following Lemma. 
\begin{lemma}\label{lemma: convergence assumptions}
  If there exists $\alpha >0 $ and a $q \ge 1$ such that for $m=1, 2$
  \begin{enumerate}
   \item\label{it:my_local_consistency}
     $\left|\EE[(X_{i+1}-X_{i})^{\otimes m}|X_{i}=X^n_i]-\EE[(X_{i+1}^{n}-X_{i}^{n})^{\otimes m}|X_{i}^{n}=X^n_i]\right|\leq c(1+|X^n_i|^{q})n^{-1-\alpha}$,
  \item \label{it: uniform moments}
    $\EE[\max_{i}|X_{i}^{n}|^{q}]\leq  c(1+|X_{0}|^{q})$,
\item\label{it:6power_conditioned}
   $ \EE [ |X_{i+1}^{n}-X_{i}^{n}|^{6} | X_i^n ]\leq c(1+|X_{i}^{n}|^{q})n^{-2-2\alpha}$,
\end{enumerate}
then $X^n$ converges weakly with rate $\min\{\alpha,\frac{1}{2}\}$ to $X$ for $f\in C^3_P$.
\end{lemma}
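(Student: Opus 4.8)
The plan is to reduce Lemma~\ref{lemma: convergence assumptions} to Theorem~\ref{th:KP}, which is the abstract weak-convergence result from \cite{Kloeden1992}. The key observation is that the three hypotheses of the Lemma are designed precisely so that, once combined with the local-consistency estimates for the SDE itself (the two displays just before Definition~\ref{def:local_consistency}), they imply the hypotheses of Theorem~\ref{th:KP}: namely that $(X^n)$ is locally $\delta(n)$-consistent with $X$ for $\delta(n) = n^{-2\alpha}$ (up to constants), that $\EE[|X^n_{i+1}-X^n_i|^6] \le \delta(n) n^{-2}$, and that $\EE[\max_i |X^n_i|^{2q}] \le c(1+|X_0|^{2q})$ for every $q \ge 1$.

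First I would verify the local consistency. By the triangle inequality,
\begin{align}
\left|\EE\left[\frac{X^n_{i+1}-X^n_i}{n^{-1}}\Big| X^n_i = x\right] - \mu(x)\right|
&\le \left|\EE\left[\frac{(X_{i+1}-X_i)}{n^{-1}}\Big| X_i = x\right] - \mu(x)\right|
+ n \left|\EE[X^n_{i+1}-X^n_i|X^n_i=x] - \EE[X_{i+1}-X_i|X_i=x]\right|.
\end{align}
The first term on the right is $O((1+|x|^q)n^{-\alpha})$ by the SDE's own local consistency estimate (Ito's lemma, as recalled in the excerpt), and the second term is $O((1+|x|^q)n^{-\alpha})$ by hypothesis~\ref{it:my_local_consistency} with $m=1$. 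Squaring, taking expectations, and using hypothesis~\ref{it: uniform moments} to control $\EE[(1+|X^n_i|^q)^2] \le \EE[(1+|X^n_i|^{2q})]\cdot c \le c(1+|X_0|^{2q})$, one gets $\EE(\cdots)^2 = O(n^{-2\alpha})$; the same argument with $m=2$, $\otimes 2$ in place of $\otimes 1$, and $\Sigma(x)$ in place of $\mu(x)$ handles the second consistency condition. So $\delta(n) = c\,n^{-2\alpha}$ works.

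Next, hypothesis~\ref{it:6power_conditioned} together with~\ref{it: uniform moments} gives $\EE[|X^n_{i+1}-X^n_i|^6] = \EE[\EE[|X^n_{i+1}-X^n_i|^6|X^n_i]] \le c(1+|X_0|^q)n^{-2-2\alpha} \le \delta(n) n^{-2}$ with the same $\delta(n)=cn^{-2\alpha}$, and hypothesis~\ref{it: uniform moments} (stated for the single exponent $q$, but one applies it at every power, which is exactly the ``for every $q \ge 1$'' requirement in Theorem~\ref{th:KP} — here I would note that in the actual construction the moment bound holds at all powers, or simply invoke it as an assumption for each $q$) supplies the uniform-moment condition $\EE[\max_i |X^n_i|^{2q}] \le c(1+|X_0|^{2q})$. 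The coefficients $\mu,\sigma$ are in $C^4_b$ by the standing hypothesis. Hence all hypotheses of Theorem~\ref{th:KP} are met, and \eqref{eq:KP_conv} yields $|\EE f(X^n_n) - \EE f(X_T)| \le c\min\{\sqrt{\delta(n)}, n^{-1/2}\} = c\min\{n^{-\alpha}, n^{-1/2}\} = c\,n^{-\min\{\alpha,1/2\}}$ for $f \in C^3_P(\R^d,\R)$, which is exactly weak convergence with rate $\min\{\alpha,\tfrac12\}$ in the sense of Definition~\ref{def:conv}; for intermediate times $t$ one runs the same argument on $[0,t]$ with $\lfloor nt/T\rfloor$ steps.

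The main obstacle — really the only non-bookkeeping point — is the passage from a \emph{conditional} bound on the one-step increment difference (hypothesis~\ref{it:my_local_consistency}, which compares the MC increment to the \emph{true SDE} increment started from the same point $X^n_i$) to the \emph{unconditional} squared-error form required by Definition~\ref{def:local_consistency}; this is handled by inserting the SDE's intrinsic local-consistency estimate as an intermediate term and then using hypothesis~\ref{it: uniform moments} to bound the resulting polynomial moments of $X^n_i$ uniformly in $i$. One should also be slightly careful that Definition~\ref{def:local_consistency} quantifies over $i \le n$ while the bounds are uniform in $i$, so no issue arises. Everything else is the triangle inequality and Jensen's inequality.
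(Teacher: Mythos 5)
Your proposal is correct and follows essentially the same route as the paper: the paper also reduces the Lemma to Theorem~\ref{th:KP}, with your triangle-inequality/square-and-take-expectations step being exactly the content of its Appendix Lemma~\ref{lem:eqv_loc_cons} (items 1 and 2 giving local $\delta(n)$-consistency with $\delta(n)=n^{-2\alpha}$), and items 2 and 3 supplying the sixth-moment and uniform-moment conditions. Your side remarks (the single-$q$ versus all-$q$ issue, handled in the construction by Lemma~\ref{lem:maximum_scheme}, and the uniformity in $i\le n$) are consistent with how the paper uses these bounds.
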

\begin{proof}
 The first two items imply the local consistency~\eqref{eq:KP_consistency} for $\delta(n)=n^{-2\alpha}$, see Appendix, Lemma~\ref{lem:eqv_loc_cons}.
 Similarly, the second and third items imply the conditions required by Theorem~\ref{th:KP}, in particular the third item, thanks to item~\ref{it: uniform moments}, implies $ \EE [ |X_{i+1}^{n}-X_{i}^{n}|^{6} ]\leq \delta(n)n^{-1}$ with $\delta(n)=n^{-2\alpha}$. 
\end{proof}

\begin{proof}[Proof of Theorem~\ref{th:mm_d=1}]%
We first deal with the strictly elliptic case when $\sigma^2_{\text{min}}\coloneqq \inf_x \sigma^2(x)> 0$. 
Theorem~\ref{th:t_d=1} applied with
$
\ma={\mu(x)}{n^{-1}}$ and $\vb={\sigma(x)}{ n^{-1/2}}
$
guarantees the existence of a random variable $Y_x$ such that
\begin{align}\label{eq: right moments}
  \EE[Y_x]={\mu(x)}{n^{-1}} , \quad \EE[Y_x^2] = {\mu^2(x)}{n^{-2}} +{\sigma^2(x)}{n^{-1}}
\end{align}
and such that 
\begin{align}\label{eq: grid}
\operatorname{supp}(Y_x) =  \left\{ z \gamma_n,z\in\Z\text{ s.t. }|z| \gamma_n\leq\sqrt{ {\mu(x)^{2}}{n^{-2}}+ {\sigma(x)^{2}}{n^{-1}} }+ \gamma_n \right\},
\end{align}
if we can show that  
\begin{align}\label{eq:to_be_shown}
\ma^{2}+\vb^{2}\ge(\left\lceil \ma \right\rceil _{\gamma_n}^{2}-\left\lfloor \ma \right\rfloor _{\gamma_n}^{2})(\ma-\left\lfloor \ma \right\rfloor _{\gamma_n}) \gamma_n^{-1}+\left\lfloor \ma \right\rfloor _{\gamma_n}^{2}.
\end{align}
Substituting \eqref{eq:to_be_shown} yields
\begin{align}\label{eq:to_be_satisfied_ch6_d=1}
\frac{\sigma(x)^{2}}{n}\!&\geq\!\frac{\ceil{\frac{\mu(x)}{n}}_{\gamma_{n}}^{2}\!-\!\floor{\frac{\mu(x)}{n}}_{\gamma_{n}}^{2}}{\gamma_{n}}\!\left(\!\frac{\mu(x)}{n}\!-\!\floor{\frac{\mu(x)}{n}}_{\gamma_{n}}\!\right)\!+\!\floor{\frac{\mu(x)}{n}}_{\gamma_{n}}^{2}\!-\!\frac{\mu^{2}(x)}{n^{2}}\!=\!\theta\gamma_{n}^{2}(1-\theta), 
\end{align}
where $\theta$ in the last equality is defined as 
$
  \theta\coloneqq (\frac{\mu(x)}{n}-\floor{\frac{\mu(x)}{n}}_{\gamma_{n}} )\gamma_{n}^{-1}. 
$
Since $\theta\in[0,1]$, choosing $\gamma_n\le {2}{n^{-1/2}} \sigma_{\min} $ implies \eqref{eq:to_be_satisfied_ch6_d=1} because $\theta(1-\theta)\in[0, 1/4]$ if $\theta\in[0,1]$.
Hence, the existence of the random variable $Y_x$ matching the moments~\eqref{eq: right moments} and with support on the grid \eqref{eq: grid} follows.
By applying this construction to different $x$ we can define a Markov chain $X^{n}$ as described in Algorithm~\ref{algo:tree_recomb} by inductively filling out the transition matrix $P^{(n)}$. \\
If $X^n_0$ is not in the lattice $\gamma_n\Z$ some attention is needed: to have $X_1^n$ in $\gamma_n\Z$ it is necessary to build $Y_{X^n_0}$ on the lattice $\gamma_n\Z-X_0^n$, indeed $X^n_1 = X^n_0+Y_{X^n_0}$ would be in $\gamma_n\Z$. 
This is also equivalent to build the r.v. $\tilde Y_{X^n_0}$ on $\gamma_n\Z$ s.t. 
$\EE\tilde Y_{X^n_0} = \mu(X_0^n)+X_0^n$ and 
$\EE\tilde Y_{X^n_0}^{2} = \sigma(X_0^n)^2 + (\mu(X_0^n)+X_0^n)^2$ and define $Y_{X^n_0}:=\tilde Y_{X^n_0} -X_0^n$ with support on $\gamma_n\Z-X_0$ s.t. 
$\EE Y_{X^n_0} = \mu(X_0^n)$ and 
$\VV Y_{X^n_0}= \sigma(X_0^n)^2$. \\
The resulting Markov chain $X^{n}$ evolves on the state space $\gamma_n \Z$ and it only remains to show the claimed convergence rate and growth of the support as the time progresses.
We do this by showing that the assumptions of Lemma~\ref{lemma: convergence assumptions} apply. 
Lemma~\ref{lemma: convergence assumptions}-Item~\eqref{it:my_local_consistency} holds by~\eqref{eq: right moments}.
Lemma~\ref{lemma: convergence assumptions}-Item~\eqref{it: uniform moments} holds in great generality for Markov chains following this construction, see Lemma~\ref{lem:maximum_scheme}.  
For the remaining Lemma~\ref{lemma: convergence assumptions}-Item~\eqref{it:6power_conditioned} note that for some $z_j\in\Z$
\begin{align}\label{eq:d=1_ineq_assumptions_ch6}
\EE\left|Y_{x}\right|^{6}\!=\!&\sum_{j=1}^{3}p_{j}|2n^{-1/2}\sigma_{\text{min}}z_{j}|^{6}\!\leq\!c\left|n^{-6}\mu(x)^{6}\!+\!n^{-3}\sigma(x)^{6}\!+\!2^{6}n^{-3}\sigma_{\text{min}}^{3}\right|\!\leq\!c(1\!+\!|x|^{6})n^{-3},
\end{align}
which we can apply to $x=X_{i}^n$ and shows that the assumptions of Lemma~\ref{lemma: convergence assumptions} are satisfied which in turn finishes the proof of the convergence rate.

When $\mu, \sigma$ have linear growth, in order to bound the growth of the support we use the discrete {Gr\"onwalls Inequality, cf. Lemma~\ref{lem:Gronw_dis}, }
\begin{align}
  |X_{i}^{n}|\leq&\sum_{j=0}^{i-1}\left|Y_{X_{j}^{n}}\right|\leq\sum_{j=0}^{i-1}n^{-1/2}c(1+|X_{j}^{n}|)+\gamma_{n}\leq (cn^{-1/2}i+i\gamma_{n})\exp ( cin^{-1/2}).
\end{align}
Since the points $l_k \in \gamma_n\Z$ such that $|l_{k}|\leq c $ are $c/\gamma_n$ we have that %
\[
  \operatorname{card}[\operatorname{supp}(X_{i}^{n})]\leq\frac{i(c{n^{-1/2}}+\gamma_{n})\exp({cin^{-1/2})}}{\gamma_{n}}.
\]
If $\mu, \sigma$ are bounded, it is easy to obtain through similar reasoning that 
\[
  |X_{i}^{n}|\le\sum_{j=0}^{i-1}\left|Y_{X_{j}^{n}}\right|\leq icn^{-1/2}+i\gamma_n, 
\] 
which implies that 
\[
  \operatorname{card}[\operatorname{supp}(X_{i}^{n})]\leq\frac{icn^{-1/2}}{\gamma_{n}}+ic.
\]
This finishes the proof of the elliptic case $\sigma^2_{\text{min}}>0$.

The non-elliptic case, $\inf \sigma^2(x)=0$, follows analogously: 
as before we rely on Theorem~\ref{th:t_d=1} that ensures the existence of a random variable $Y_x$ such that 
\begin{align}\label{eq: right moments non-elliptic}
  \EE\left[Y_{x}\right]=n^{-1}\mu(x),\quad\EE[Y_{x}^{2}-n^{-2}\mu^{2}(x)-n^{-1}\sigma^{2}(x)]\le\frac{\gamma_{n}^{2}}{4}
\end{align}
and that is supported on {$\gamma_n\Z$}. 
Note that, unlike in the elliptic case, this time we do not match the second moment exactly,  
and the second moment condition shows that for any any $\beta>0$ we can take $\gamma_n\leq n^{\frac{-1-\beta}{2}}$.

For the rates of convergence it is enough to observe that $\sqrt{\delta(n)}$ of Theorem~\ref{th:KP} is equal to 
$\beta$ due to approximation of the second moment, see Lemma~\ref{lemma: convergence assumptions}-Item~\eqref{it:my_local_consistency}.
Moreover, remember that the rate of convergence in Theorem~\ref{th:KP} is bounded by $1/2$. 

Note that both the choices of $\gamma_n$ satisfy $n\gamma_n^2<c$ for some $c>0$, as required by Lemma~\ref{lem:maximum_scheme}.
\end{proof}
For the special case $\mu(x)=0$ we can prove the following Corollary. 
\begin{corollary}
Let $d=1$, $\mu=0$, $\sigma \in C^4_b$ and for any $c>0$ set $\gamma_{n} = c n^{-1/2}$. 
   If $X_0^n\in\gamma_n\Z$, then there exists a lattice approximation $(X^n)_{n \ge 1}$ to $X$ on $\gamma_n\Z$  respect to the class of functions $f\in C^3_P$ with rate $1/2$. 
   Moreover, the same bounds on the cardinality of the support of Theorem~\ref{th:mm_d=1} hold true. 
\end{corollary}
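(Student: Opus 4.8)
The plan is to specialize the proof of Theorem~\ref{th:mm_d=1} to the case $\mu\equiv 0$, where the constructions simplify considerably. First I would invoke Corollary~\ref{cor:mu=0_dim1} (rather than the full Theorem~\ref{th:t_d=1}) with parameters $\ma=0$ and $\vb = \sigma(x)n^{-1/2}$ and lattice spacing $\gamma_n = cn^{-1/2}$: this directly yields, for each $x$, a random variable $Y_x$ with $\EE[Y_x]=0$, $\EE[Y_x^2]=\sigma^2(x)n^{-1}$ \emph{exactly}, and $\operatorname{supp}(Y_x)\subset\{y\in\gamma_n\Z:|y|\le \sigma(x)n^{-1/2}+\gamma_n\}$. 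Since $\ma=0$ there is no issue with the condition \eqref{eq: condition variance} (which was the only obstacle requiring $\sigma_{\min}>0$ in the general theorem), so no ellipticity hypothesis is needed here. Because $X_0^n\in\gamma_n\Z$ and all increments land in $\gamma_n\Z$, the resulting Markov chain $X^n$ built by the inductive filling-out of the transition matrix (Algorithm~\ref{algo:tree_recomb}) evolves on $\gamma_n\Z$, with no need for the shifted-lattice workaround.

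Next I would verify the three hypotheses of Lemma~\ref{lemma: convergence assumptions} to get weak convergence with rate $\min\{\alpha,\tfrac12\}$. Item~\eqref{it:my_local_consistency} holds by construction with the moments matched exactly, so in fact the local consistency error is governed entirely by the Ito-expansion remainder for the SDE, giving $\alpha>0$ as in the local-consistency estimates stated in the Background section; here, since $\mu=0$, the first moment is trivially matched and the relevant $\alpha$ from the second-order Ito expansion of $\Sigma$ gives rate $1/2$. Item~\eqref{it: uniform moments}, the uniform-in-$i$ moment bound, holds by Lemma~\ref{lem:maximum_scheme}, which applies since $\gamma_n = cn^{-1/2}$ satisfies $n\gamma_n^2 < c^2$. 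Item~\eqref{it:6power_conditioned} is the sixth-moment bound on the conditional increment: here $\EE[|Y_x|^6] = \sum_j p_j|\gamma_n z_j|^6 \le c(\sigma^2(x)n^{-1}+\gamma_n^2)^3 \le c(1+|x|^6)n^{-3}$, exactly as in \eqref{eq:d=1_ineq_assumptions_ch6} but without the extra $\sigma_{\min}$ term, so this is routine. Together these give the claimed rate $1/2$ for $f\in C^3_P$.

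Finally, for the cardinality bounds on $\operatorname{supp}(X_i^n)$ I would repeat verbatim the argument at the end of the proof of Theorem~\ref{th:mm_d=1}: in the bounded-coefficients case, $|X_i^n|\le \sum_{j=0}^{i-1}|Y_{X_j^n}|\le icn^{-1/2}+i\gamma_n$, and since there are at most $c/\gamma_n$ lattice points of $\gamma_n\Z$ in any interval of length $c$, we get $\operatorname{card}[\operatorname{supp}(X_i^n)]\le icn^{-1/2}/\gamma_n + ic$; in the linear-growth case, the discrete Gr\"onwall inequality (Lemma~\ref{lem:Gronw_dis}) gives $|X_i^n|\le (cn^{-1/2}i+i\gamma_n)\exp(cin^{-1/2})$ and hence the exponential bound on the support cardinality. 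These are precisely the bounds asserted in Theorem~\ref{th:mm_d=1}, so the statement follows.

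I do not expect any real obstacle here: the $\mu=0$ restriction removes exactly the technical complications (the ellipticity condition \eqref{eq: condition variance}, the Case~1/2/3 case analysis, the shifted-lattice bookkeeping) that made the general proof delicate, so the corollary is essentially a simplified re-run of Theorem~\ref{th:mm_d=1}. The only point requiring a line of care is confirming that the choice $\gamma_n=cn^{-1/2}$ for \emph{arbitrary} $c>0$ is still compatible with Lemma~\ref{lem:maximum_scheme} (it is, since only $n\gamma_n^2<\text{const}$ is needed) and that exact second-moment matching — rather than the $\gamma_n^2/4$-approximate matching of the non-elliptic case — is what delivers the full rate $1/2$ instead of $\min(\beta,\tfrac12)$.
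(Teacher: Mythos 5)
Your proposal is correct and follows essentially the same route as the paper: the paper's own proof simply invokes Corollary~\ref{cor:mu=0_dim1} for exact moment matching when $\mu=0$ and then repeats the reasoning of Theorem~\ref{th:mm_d=1}, noting exactly as you do that the arbitrary choice $\gamma_n=cn^{-1/2}$ is admissible because exact matching is lattice-independent and only the constraint $n\gamma_n^2<c$ from Lemma~\ref{lem:maximum_scheme} must hold. Your write-up just fills in the verification of Lemma~\ref{lemma: convergence assumptions} and the support bounds more explicitly than the paper does.
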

\begin{proof}
If $\mu(x)=0$ we can use Corollary~\ref{cor:mu=0_dim1} and then follow the same reasoning of the previous proof. 
Since the first two moments can be approximated exactly independently of the lattice considered, we can choose $\gamma_n=c n^{-1/2}$ for any $c>0$. 
The dependence of $\gamma_n$ on $n$ is chosen because required by Lemma~\ref{lem:maximum_scheme}: $\gamma_n$ must satisfy $n\gamma_n^2<c$ for some $c>0$. 
\end{proof}
\section{Lattice-Tree Models for Two-Dimensional Diffusions}\label{sec: Models for Two-Dimensional Diffusions}
We now carry out the same procedure in the two-dimensional case.
The key difference to the one-dimensional case is in the first step: the existence of a random variable that matches the moments closely and is supported on a lattice is much more involved to be shown. 
Nevertheless, we show that by using the eigenvalues of the matrix $\vB$ the system in \eqref{eq:fundamental_system} can be solved.  
\subsection*{Matching the Moments}%
Throughout this Section we fix $\ma \in \R^d$ and $\vB \in \R^{d\times d}$.
We assume that $\vB$ is symmetric and positive definite and denote the eigenvalues of $\vB$ with $(\lambda_i)_i$ ordered from biggest to smallest, 
$\lambda_1 \ge \lambda_2 \ge \cdots \ge \lambda_{\min}$ with $\lambda_{\min}$ denoting the smallest eigenvalue. 
We need the following two theorems to prove the main statements of this Section. 
  \begin{theorem}\label{th:hoff}[\citet{Hoffman1953}]
  If $A$ and $B$ are normal matrices with eigenvalues $\lambda_i[A]$ and $\lambda_j[B]$, then there exists a suitable numbering of the eigenvalues s.t.
  \[
  \sum_i |  \lambda_i[A] - \lambda_i[B]   |^2\le |A-B |^2.
  \]
  \end{theorem}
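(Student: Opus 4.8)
The plan is to reduce the Frobenius-norm inequality to the maximization of a linear functional over the set of doubly stochastic matrices and then to invoke the Birkhoff--von Neumann theorem. Since $A$ and $B$ are normal they are unitarily diagonalizable, so I would first write $A = U D_A U^{*}$ and $B = V D_B V^{*}$ with $U,V$ unitary, $D_A = \diag(\lambda_i[A])$ and $D_B = \diag(\lambda_j[B])$. Using unitary invariance of the Frobenius norm this turns $|A-B|^{2}$ into $|D_A - W D_B W^{*}|^{2}$ with $W := U^{*}V$ unitary, which is the key simplification.

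Next I would expand the square, obtaining $|D_A - W D_B W^{*}|^{2} = |D_A|^{2} + |D_B|^{2} - 2\operatorname{Re}\tr(D_A W D_B^{*} W^{*})$ by cyclicity of the trace, and then compute $\operatorname{Re}\tr(D_A W D_B^{*} W^{*}) = \sum_{i,j}\operatorname{Re}\!\bigl(\lambda_i[A]\,\overline{\lambda_j[B]}\bigr)\,s_{ij}$, where $s_{ij} := |W_{ij}|^{2}$. The crucial observation is that, since the rows and columns of the unitary matrix $W$ are orthonormal, the matrix $S = (s_{ij})$ has nonnegative entries with all row and column sums equal to $1$, i.e.\ $S$ is doubly stochastic.

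Finally, since $S \mapsto \sum_{i,j}\operatorname{Re}(\lambda_i[A]\,\overline{\lambda_j[B]})\,s_{ij}$ is linear, over the compact convex Birkhoff polytope of doubly stochastic matrices it attains its maximum at an extreme point, and by the Birkhoff--von Neumann theorem these extreme points are exactly the permutation matrices. Hence there is a permutation $\pi$ with $\operatorname{Re}\tr(D_A W D_B^{*} W^{*}) \le \sum_i \operatorname{Re}(\lambda_i[A]\,\overline{\lambda_{\pi(i)}[B]})$, and substituting back gives
\[
|A-B|^{2} \ge |D_A|^{2}+|D_B|^{2} - 2\sum_i \operatorname{Re}\bigl(\lambda_i[A]\,\overline{\lambda_{\pi(i)}[B]}\bigr) = \sum_i \bigl|\lambda_i[A]-\lambda_{\pi(i)}[B]\bigr|^{2},
\]
so renumbering the eigenvalues of $B$ according to $\pi$ yields the claim. (In the real symmetric case actually needed later, the real parts are superfluous and $W$ may be taken orthogonal.)

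The step I expect to be the main obstacle is the last one: recognizing that $(|W_{ij}|^{2})$ is doubly stochastic is exactly what lets the unitary conjugation be replaced by an optimal permutation via Birkhoff's theorem, and one must be slightly careful with complex conjugates when splitting the two cross terms into twice a real part. By contrast, the diagonalization and the expansion of the square are routine linear algebra once normality has been used.
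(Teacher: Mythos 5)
Your proposal is correct, and it is the classical proof of the Hoffman--Wielandt theorem: unitary diagonalization of the two normal matrices, unitary invariance of the Frobenius norm to reduce to $|D_A - W D_B W^{*}|^{2}$, the observation that $(|W_{ij}|^{2})$ is doubly stochastic, and the Birkhoff--von Neumann theorem to replace the doubly stochastic matrix by an optimal permutation. The inequality directions are handled correctly (the value of the linear functional at $S$ is bounded above by its value at the maximizing permutation, which after the sign flip gives the lower bound on $|A-B|^{2}$), and the complex-conjugate bookkeeping in the cross term is right. Note, however, that the paper does not prove Theorem~\ref{th:hoff} at all: it is imported as a known result with a citation to Hoffman and Wielandt, and is only used later for real symmetric matrices (in the proof of Theorem~\ref{th:t_d=2}, to compare the eigenvalues of $\vB$ and $\vB-\VV\tilde Y$), where, as you remark, the real parts are unnecessary and $W$ may be taken orthogonal. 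So there is no in-paper argument to compare against; your write-up is a complete and standard self-contained proof of the cited result.
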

  \begin{theorem}\label{th:Gerschgorin}[Gerschgorin Circle Theorem \cite{Gerschgorin1931}]
  Let $B\in \R^{d\times d }$ with entries $b_{ij}$, then every eigenvalues $\lambda_i$ of $B$ lies in at least one of the Gershgorin circles 
  \[ 
	  \text{Circle}(b_{ii}):=\{c\in\mathbb{C} : |c-b_{ii}|\le \sum_{j\not=i} |b_{ij}|\}. 
  \]
  \end{theorem}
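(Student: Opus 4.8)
The plan is to run the classical largest-component argument on an eigenvector. First I would fix any eigenvalue $\lambda$ of $B$ and choose a corresponding eigenvector $v=(v_1,\dots,v_d)^\top\neq 0$, so that $Bv=\lambda v$. Then I would pick an index $k$ at which $|v_k|=\max_{1\le j\le d}|v_j|$; since $v\neq 0$, we have $|v_k|>0$. Reading off the $k$-th component of $Bv=\lambda v$ gives $\sum_{j=1}^d b_{kj}v_j=\lambda v_k$, and isolating the diagonal term yields $(\lambda-b_{kk})v_k=\sum_{j\neq k}b_{kj}v_j$.

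Next I would take absolute values and apply the triangle inequality, using $|v_j|\le|v_k|$ for every $j$: this gives $|\lambda-b_{kk}|\,|v_k|\le\sum_{j\neq k}|b_{kj}|\,|v_j|\le|v_k|\sum_{j\neq k}|b_{kj}|$. Dividing by $|v_k|>0$ produces $|\lambda-b_{kk}|\le\sum_{j\neq k}|b_{kj}|$, i.e. $\lambda\in\mathrm{Circle}(b_{kk})$. As $\lambda$ was an arbitrary eigenvalue, each eigenvalue of $B$ lies in at least one Gershgorin circle, which is the assertion.

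There is no real obstacle here: the single point one must not skip is that $|v_k|\neq 0$, which is precisely why $k$ is taken as the index of a maximal-modulus component of a nonzero vector (the division step is otherwise illegitimate). The statement is classical, quoted here from \cite{Gerschgorin1931}, and in this paper it only plays the auxiliary role -- in tandem with Theorem~\ref{th:hoff} -- of confining the eigenvalues of the covariance-type matrix $\vB$ to intervals controlled by its diagonal entries, which is what the two-dimensional lattice moment-matching construction of this Section will need.
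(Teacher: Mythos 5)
Your argument is correct: it is the canonical proof of the Gerschgorin circle theorem (fix an eigenvalue, take the maximal-modulus component of an eigenvector, read off that row of $Bv=\lambda v$, and apply the triangle inequality), and you correctly flag the only delicate point, namely that $|v_k|>0$ justifies the division. The paper itself gives no proof — it states the result as a classical fact quoted from \cite{Gerschgorin1931} and uses it only to bound the eigenvalues of $\Sigma$ in the two-dimensional construction — so there is nothing in the paper to compare against; your write-up simply supplies the standard argument behind the citation, and it does so without any gap.
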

Now we are ready to prove the main statement. 
\begin{theorem}\label{th:t_d=2}
  Let $d=2$, $\ma \in \R^d$, $\vB \in \R^{d\times d}$ symmetric and positive definite, 
  $\lambda_{\min}>0$, then for every $\gamma$ such that $0<\gamma \le \sqrt{\lambda_{\min} / 3}$ there exists a random variable $Y$ such that
  \begin{enumerate}
  \item $\EE[ Y]=\ma$ and $\EE[ Y^{\otimes 2}]=\ma^{\otimes 2}+\vB$;	
  \item $\operatorname{supp}(Y)\subset \{ y\in \gamma \Z^2 : |y|_\infty \leq |\ma|_\infty + \sqrt{2\lambda_1}+\sqrt{2\lambda_2} + 6\gamma \}$. 
  \end{enumerate}
  Moreover, if $\ma =0$, the above applies for every $0 < \gamma \le \sqrt{\lambda_{\min}}$.
\end{theorem}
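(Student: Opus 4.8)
The plan is to realize the prescribed moments by an \emph{explicit} $\gamma\Z^2$-supported probability measure and then invoke Carath\'eodory's Theorem~\ref{th:cath} to trim its support to at most $6$ atoms, exactly as in the proof of Theorem~\ref{th:t_d=1}. Writing the law of $Y$ as $\sum_i p_i\delta_{\ell_i}$, the requirements $\EE[Y]=\ma$ and $\EE[Y^{\otimes 2}]=\ma^{\otimes 2}+\vB$ say precisely that the point $(\ma,\ma^{\otimes 2}+\vB)\in\R^2\times\operatorname{Sym}(2)\cong\R^5$ lies in the convex hull of $\{(\ell,\ell^{\otimes 2}):\ell\in\gamma\Z^2,\ |\ell|_\infty\le R\}$ with $R=|\ma|_\infty+\sqrt{2\lambda_1}+\sqrt{2\lambda_2}+6\gamma$; once this membership is shown, Carath\'eodory yields the $6$-atom representation and the support bound is read off from $R$. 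So the whole proof reduces to establishing this convex-hull membership with a configuration of lattice points whose $\ell^\infty$-radius does not exceed $R$.

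First I would reduce to the centered case $\ma=0$. Let $\Xi$ be the $\gamma\Z^2$-valued variable with independent coordinates $\Xi_j$ supported on $\{\floor{\ma_j}_{\gamma},\ceil{\ma_j}_{\gamma}\}$ and $\EE[\Xi_j]=\ma_j$; then $\operatorname{Cov}(\Xi)$ is diagonal with entries in $[0,\gamma^2/4]$, so $\vB-\operatorname{Cov}(\Xi)$ is still symmetric positive definite with smallest eigenvalue at least $\lambda_{\min}-\gamma^2/4\ge\frac{11}{12}\lambda_{\min}>\gamma^2$ under the hypothesis $\gamma\le\sqrt{\lambda_{\min}/3}$. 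If $Y_0$ is a centered $\gamma\Z^2$-valued variable with $\operatorname{Cov}(Y_0)=\vB-\operatorname{Cov}(\Xi)$ and $Y_0$ independent of $\Xi$, then $Y:=\Xi+Y_0$ has the required mean and second moment, and $|\Xi|_\infty\le|\ma|_\infty+\gamma$, so the support estimate follows once $Y_0$ is controlled. This is exactly the role of the factor $3$: for $\ma=0$ no buffer is needed, which is why the cleaner hypothesis $\gamma\le\sqrt{\lambda_{\min}}$ suffices there.

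The core step is therefore: given $\vC\in\R^{2\times 2}$ symmetric positive definite with $\lambda_{\min}(\vC)\ge\gamma^2$, build a centered $\gamma\Z^2$-valued variable with covariance \emph{exactly} $\vC$ and $\ell^\infty$-radius $\lesssim\sqrt{2\lambda_1(\vC)}+\sqrt{2\lambda_2(\vC)}+O(\gamma)$. Diagonalize $\vC=\lambda_1 v_1v_1^\top+\lambda_2 v_2v_2^\top$. The key two-dimensional device is that any integer vector $u_1=(p,q)\in\Z^2$ has an \emph{exactly} orthogonal integer companion $u_2=(-q,p)$, so one can pick $u_1$ with direction close to $v_1$ (hence $u_2$ close to $v_2$), the closeness improving with $|u_1|$; this is precisely what fails for $d\ge 3$ and foreshadows the stronger hypotheses needed there. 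One then lays out a candidate configuration of lattice points near $0$, near $\pm\sqrt{2\lambda_i}\,v_i$, and near $\pm(\sqrt{\lambda_1}v_1\pm\sqrt{\lambda_2}v_2)$ (which is where the $\sqrt 2$, i.e. the $\ell^2$-to-coordinate-sum factor, and the $6\gamma$ rounding slack enter), and shows that $(\,0,\vC)$ lies in the convex hull of their moment images. The point of Theorems~\ref{th:hoff} and~\ref{th:Gerschgorin} is to certify this: the Hoffman--Wielandt inequality bounds how far the spectrum of the covariance of any candidate measure drifts when the continuous eigenframe $v_1,v_2$ is replaced by the nearby lattice frame, and Gershgorin's disc theorem turns the positivity of the mixing weights into a diagonal-dominance condition on $\vC$ written in the $u_1,u_2$ basis; both estimates are exactly governed by $\lambda_{\min}(\vC)\ge\gamma^2$, i.e. (after unfolding the reduction) by $\gamma\le\sqrt{\lambda_{\min}/3}$ for $\vC=\vB$.

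I expect the main obstacle to be precisely this certification: choosing the lattice directions, scales, and weights so that the mixing weights are simultaneously nonnegative, the residual produced by lattice rounding is absorbed \emph{exactly} rather than merely to within $O(\gamma^2)$, and the total $\ell^\infty$-radius stays below $\sqrt{2\lambda_1}+\sqrt{2\lambda_2}+6\gamma$. This is the genuine two-dimensional counterpart of the three-case analysis in Theorem~\ref{th:t_d=1}, and it is the only place where the spectral hypothesis is really used; the remaining pieces — the mean reduction above and the final Carath\'eodory trimming with the support read off from $R$ — are routine.
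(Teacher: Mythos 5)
Your outer layers are sound and essentially parallel the paper's second half: decomposing $Y=\Xi+Y_0$ with a lattice-valued $\Xi$ of mean $\ma$ and covariance $O(\gamma^2)$, checking via a Weyl/Hoffman--Wielandt-type perturbation bound that $\vB-\operatorname{Cov}(\Xi)$ stays positive definite with $\lambda_{\min}$ above $\gamma^2$ under $\gamma\le\sqrt{\lambda_{\min}/3}$ (the paper does exactly this, with $\tilde Y$ supported on the hypercube corners and Theorem~\ref{th:hoff}), and finishing with Carath\'eodory's Theorem~\ref{th:cath} to trim the support. But the heart of the theorem is the step you explicitly leave as "the main obstacle": for a centered variable, the existence of lattice points $l_1,\dots,l_r\in\gamma\Z^2$ and nonnegative weights with $\sum_i p_i l_il_i^{\top}=\vC$ \emph{exactly}, within the stated $\ell^\infty$-radius, for every $\gamma\le\sqrt{\lambda_{\min}(\vC)}$. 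This is where essentially all of the paper's work lies: after the Rademacher symmetrization $Y=Z\cdot\bar Y$ reduces the problem to the second moment, the paper does not try to build the weights directly; it invokes the coordinatewise-domination criterion \eqref{eq: hypercube} in the three-dimensional space of symmetric matrices and then verifies, for each of the eight sign patterns of System~\eqref{eq:system_simplified}, that an explicit lattice point with the required bound exists — a delicate case analysis that is precisely what your sketch does not supply.

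Moreover, the route you sketch for that step is doubtful as stated. Choosing an integer frame $u_1=(p,q)$, $u_2=(-q,p)$ close in direction to the eigenframe requires $|(p,q)|$ large, but the admissible lattice points are confined to radius about $\sqrt{2\lambda_1}+\sqrt{2\lambda_2}+6\gamma$, i.e.\ only on the order of $\sqrt{\lambda_1}/\gamma$ lattice steps; when $\gamma$ is near $\sqrt{\lambda_{\min}/3}$ the attainable angular resolution is coarse and the rounding residuals are of the same order as $\lambda_2$, so it is not clear they can be "absorbed exactly" by adjusting weights while keeping them nonnegative. Hoffman--Wielandt and Gershgorin (Theorems~\ref{th:hoff}, \ref{th:Gerschgorin}) give perturbation and localization bounds on spectra, not exact convex-hull membership, so they cannot by themselves certify the exact moment identity; in the paper they play only auxiliary roles (the mean reduction, and eigenvalue bounds in Theorem~\ref{th:mm_d=2}). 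As it stands, the proposal reduces the theorem to its hardest assertion and stops there, so there is a genuine gap.
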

\begin{proof}
  We first consider the case when $\ma=0$.
  In this case, note that if $\bar Y$ is a discrete random variable that matches the second moment, $\EE \bar Y^{\otimes 2}=\vB$, and $Z$ is a Bernoulli random variable independent of $\bar Y $, such that $\Prob(Z=\pm 1)=\frac{1}{2}$, then the random variable $Y\coloneqq Z \cdot \bar Y$ has mean $\EE[Y]=a=0$ and $\EE[Y^{\otimes 2}]=\vB$.
  Hence, solving the system~\eqref{eq:fundamental_system} reduces to find $l_1, \ldots, l_r \in\gamma\Z^2$ such that 
    \begin{align}\label{eq:second moment, zero mean}
      \sum^r_{i=1}p_{i}l_{i}l_{i}^{T}=\vB,\quad	p_i\ge0,\quad \sum_{i=1}^r p_i=1,
    \end{align}
for some $r\in\N_+$.
Moreover, since $\vB$ is symmetric, there exists $Q\in\R^{d\times d}$ orthonormal whose columns are the eigenvectors of $\vB$, 
such that $\vB = Q\Lambda Q^T$ and $\Lambda$ is diagonal with elements the eigenvalues of $\vB$, which are strictly positive by assumption. 
Applying this, allows to rewrite \eqref{eq:second moment, zero mean} as
\begin{align}\label{eq:original}
\sum_{i=1}^r p_{i}Q^{T}l_{i}l_{i}^{T}Q=\Lambda,\quad	p_i\ge0,\quad \sum_{i=1}^r p_i=1.
\end{align}
Henceforth, we denote with $q_1=(q_{11}, q_{12})$ and $q_2=(q_{21}, q_{22})$ the two eigenvectors of $\vB$. 
It follows that if $q_1$ is an eigenvector for $Q$, then $-q_1$ is also an eigenvector for the same eigenvalue and $-q_1$ still forms an orthonormal basis with $\pm q_2$.
We use these properties of eigenvalues in the following without loss of generality. \\\\
\textbf{Case $q_{12}=0$ or $q_{21}=0$.}
  Assume $q_{12}=0$ (the case $q_{21}=0$ follows analogously), then $q_1=e_1$ and $q_2=e_2$.
 Hence, $Q=I$ and we can refer to {Theorem~\ref{th:t_d=d},} which applies to $d$ greater than $2$ and $\vB$ semi-positive definite.\\\\
\textbf{Case $q_{11}, q_{21}>0$.}
 We first note that
 \begin{small}
\begin{align}
\begin{cases}
\langle q_{1},q_{1}\rangle^{2}=q_{11}^{2}+q_{12}^{2}=1\\
\langle q_{2},q_{2}\rangle^{2}=q_{21}^{2}+q_{22}^{2}=1 &\\
\langle q_{1},q_{2}\rangle=q_{11}q_{21}+q_{12}q_{22}=0, 
\end{cases}&\text{ if and only if }\quad\begin{cases}
q_{11}=\text{sign}(q_{11})\sqrt{1-q_{12}^{2}}\\
q_{21}=\text{sign}(q_{21})\sqrt{1-q_{22}^{2}}\\
\frac{q_{11}}{q_{12}}=-\frac{q_{22}}{q_{21}}. 
\end{cases}
\end{align}
\end{small}
Using the last equation on the system on the right hand side, we obtain 
\[
\text{sign}(q_{11})\text{sign}(q_{12})\frac{\sqrt{1-q_{12}^{2}}}{|q_{12}|}=-\text{sign}(q_{21})\text{sign}(q_{22})\frac{|q_{22}|}{\sqrt{1-q_{22}^{2}}}. 
\] 
This in turn implies that \[\text{sign}(q_{11})\text{sign}(q_{12})=-\text{sign}(q_{21})\text{sign}(q_{22}).\]
We now claim that without loss of generality, we can assume that $q_{11}>0$, $q_{12}>0$ and $q_{21}>0, q_{22}<0$: indeed if we flip the sign of the eigenvectors they are still eigenvectors and we can re-label $q_1, q_2$. %
Moreover, note that 
\begin{align}
\frac{q_{11}}{q_{12}}=-\frac{q_{22}}{q_{21}}\Longleftrightarrow q_{22}^{2}&=\left(\frac{q_{11}}{q_{12}}\right)^{2}\left(1-q_{22}^{2}\right)\Longleftrightarrow q_{22}^{2}\left(1+\left(\frac{q_{11}}{q_{12}}\right)^{2}\right)=\left(\frac{q_{11}}{q_{12}}\right)^{2}\\\Longleftrightarrow q_{22}&=-\frac{\frac{q_{11}}{\sqrt{1-q_{11}^{2}}}}{\sqrt{1+\frac{q_{11}^{2}}{1-q_{11}^{2}}}}=-q_{11}, 
\end{align}
which implies that 
\begin{align}\label{eq:q1_q2}
  q_{1}=&\left(q_{11},\sqrt{1-q_{11}^{2}}\right) \text{ and }
q_{2}=\left(\sqrt{1-q_{11}^{2}},-q_{11}\right).
\end{align}
In the following, we assume that $1>q_{11}\ge 1/\sqrt{2}$ - otherwise we can state $q_1, q_2$ as functions of $q_{12}$ and proceed similarly.\footnote{The implications regarding $q_1, q_2$ at first glance can seem hostile, however geometrically they are more intuitive.
They depend on the fact that $q_1, q_2$ form an orthonormal basis and changing the signs they are still eigenvectors and they still form an orthonormal basis.}
Now note that if there exist $r$ points  $l_1,\ldots, l_r\in\gamma\Z^2$ s.t.
\begin{align}\label{eq:inCH}
\Lambda\in \operatorname{ConvexHull}\{(Q^T l_1)^{\otimes 2}, \ldots, (Q^T l_r)^{\otimes 2}\}, 
\end{align}
then Equation~\eqref{eq:original} hold true for some $p_i$ to be computed.
It only remains to show that \eqref{eq:inCH} holds.
This is a hard problem, but in our case a simple argument works.
We make use of the simple observation that if $A=\{a_1,\ldots,a_r\} \subset \R^h$ and $b \in \R^h$ s.t.
\begin{align}\label{eq: hypercube}
  \forall \sim \in \{\le, \ge \}^h \quad \exists a \in A \text{ such that } a \sim b \text{ then } b\in \operatorname{ConvexHull}(A). 
\end{align}
We now apply \eqref{eq: hypercube} with $b=\Lambda$ and $A=\{ (Q^T l_1)^{\otimes 2},\ldots,(Q^T l_r)^{\otimes 2} \}$.
Since $\Lambda$ and $(Q^T l_i)^{\otimes 2}$ are symmetric we work in dimension $h=3$ instead of $h=4$.
The premise in Equation~\eqref{eq: hypercube} reduces to show that for each of the eight possible choices of $\sim=(\sim_2, \sim_1, \sim_1, \sim_3 )\in \{\ge,\le\}^3$, there exists a point $l \in \gamma\Z^2$ such that 
\begin{align}\label{eq:sytemCH}
  \left(\begin{array}{cc}
\langle q_{1},l\rangle^{2} & \!\!\!\!\langle q_{1},l\rangle\langle q_{2},l\rangle\\
\langle q_{1},l\rangle\langle q_{2},l\rangle & \!\!\!\!\langle q_{2},l\rangle^{2}
\end{array}\right)\sim\left(\!\!\begin{array}{cc}
\lambda_{1} & \!\!0\\
0 & \!\!\lambda_{2}
                                \end{array}\!\!\right). 
\end{align}
Spelled out in coordinates the above reads as
\begin{small}
\begin{align}
\begin{cases}
\langle q_{1},l\rangle\langle q_{2},l\rangle\!\sim_{1}\!0\\
\langle q_{1},l\rangle^{2}\!\sim_{2}\!\lambda_{1}\\
\langle q_{2},l\rangle^{2}\!\sim_{3}\!\lambda_{2}
\end{cases}&\hspace{-0,5cm}\xLeftrightarrow[\text{Using Equation~\eqref{eq:q1_q2}}]{}\begin{cases}
\left(q_{11}l_{i1}\!+\!\sqrt{1\!-\!q_{11}^{2}}l_{i2}\!\right)\!\left(\!\sqrt{1\!-\!q_{11}^{2}}l_{i1}\!-\!q_{11}l_{i2}\right)\!\sim_{1}\!0\\
\left(q_{11}l_{i1}\!+\!\sqrt{1\!-\!q_{11}^{2}}l_{i2}\right)^{2}\!\sim_{2}\!\lambda_{1}\\
\left(\sqrt{1\!-\!q_{11}^{2}}l_{i1}\!-\!q_{11}l_{i2}\right)^{2}\!\sim_{3}\!\lambda_{2}. 
\end{cases}
\end{align}
\end{small}
We denote $\varphi \coloneqq \frac{\sqrt{1-q_{11}^{2}}}{q_{11}}$, therefore the system we study is 
\begin{small}
\begin{align}\label{eq:system_simplified}
\begin{cases}
\left(l_{i1}+\varphi l_{i2}\right)\left(\varphi l_{i1}-l_{i2}\right)\sim_{1} 0\\\left(l_{i1}+\varphi l_{i2}\right)^{2}\sim_{2} \frac{\lambda_{1}}{q_{11}^{2}}\\\left(\varphi l_{i1}-l_{i2}\right)^{2}\sim_{3}\frac{\lambda_{2}}{q_{11}^{2}}, 
\end{cases}
\end{align}
\end{small}
where $l_i=(l_{i1},l_{i2})$, $l_{i1}\in\gamma\Z$ and $l_{i2}\in\gamma\Z$. 
We now claim that for every of the $r=8$ possible choices of $\sim_1,\sim_2,\sim_3 \in \{\le, \ge \}^3$ one can find a $l_i \in \gamma \Z^2$ such that 
Equation~\eqref{eq:system_simplified} holds. 
All the possible combinations of $\sim_1,\sim_2,\sim_3$ are $
\{ \ge \ge \ge, 
\ge \ge \le, 
\ge \le \ge, 
\ge \le \le, 
\le \ge \ge, 
\le \ge \le, 
\le \le \ge, 
\le \le \le \}
$
and we study them in order. We suppress the subscript $i$ for $l_i = (l_{i1}, l_{i2})$. Let us also resume some bounds 
\begin{align}\label{eq:q_11_varphi_bounds}
\frac{1}{\sqrt{2}}\leq q_{11}<1,\quad1\leq\frac{1}{q_{11}}<\sqrt{2},\quad0\leq\varphi\leq1,\quad1\leq\frac{1}{\varphi}<\infty.
\end{align}
\begin{itemize}
\item $(\ge \ge \ge)$ We have the following system, whose solutions are on the right
\begin{small}
\begin{align}
\begin{cases}
\left(l_{1}+\varphi l_{2}\right)\left(\varphi l_{1}-l_{2}\right)\geq0\\
\left(l_{1}+\varphi l_{2}\right)^{2}\geq\frac{\lambda_{1}}{q_{11}^{2}}\\
\left(\varphi l_{1}-l_{2}\right)^{2}\geq\frac{\lambda_{2}}{q_{11}^{2}}
\end{cases}&\!\!\!\!\!\!\Longleftrightarrow\begin{cases}
\text{Solution 1}\ge\ge & \text{Solution 2}\le\le\\
l_{1}\geq\frac{\sqrt{\lambda_{1}}}{q_{11}}-\varphi l_{2}\quad\quad\text{ or }\quad & l_{1}\leq-\frac{\sqrt{\lambda_{1}}}{q_{11}}-\varphi l_{2}\\
l_{1}\geq\frac{1}{\varphi}\frac{\sqrt{\lambda_{2}}}{q_{11}}+\frac{1}{\varphi}l_{2} & l_{2}\leq-\frac{1}{\varphi}\frac{\sqrt{\lambda_{2}}}{q_{11}}+\frac{1}{\varphi}l_{2}. 
\end{cases}
\end{align}
\end{small}
``Solution 1 $\ge\ge$'' means that we consider the case where $l_{1}+\varphi l_{2}\ge0,$ $ \varphi l_{1}-l_{2}\ge0$ and similarly for the other case ``Solution 2 $\le\le$''. In this and the successive cases we focus on  ``Solution 1 $\ge\ge$''. 
In this case the solution can be obtained choosing e.g. $l_2=0$ and $l_1$ big enough, which however would result in an unbounded solution given that $1/\varphi\geq 1$ is not bounded from above. A bounded solution is\footnote{
In this and the successive cases, we have found really useful plotting the lines ($l_1$ as function of $l_2$) represented by the two equations in ``Solution 1''.
}
\begin{align}
l_{2}=&\left\lfloor -\frac{\sqrt{\lambda_{2}}}{q_{11}}\right\rfloor _{\gamma}\in[-\sqrt{2\lambda_{2}}-\gamma,-\sqrt{\lambda_{2}}]\\l_{1}=&\left\lceil \frac{\sqrt{\lambda_{1}}}{q_{11}}-\varphi\left\lfloor -\frac{\sqrt{\lambda_{2}}}{q_{11}}\right\rfloor _{\gamma}\right\rceil _{\gamma}\in\left[\sqrt{\lambda_{1}},\sqrt{2\lambda_{1}}+\sqrt{2\lambda_{2}}+2\gamma\right], 
\end{align}
which is bounded. This is obtained choosing $l_2$ so that $ \frac{1}{\varphi}\frac{\sqrt{\lambda_{2}}}{q_{11}}+\frac{1}{\varphi}l_{2}\approx 0$ and then using it in the first equation of  ``Solution 1''.
\item $(\ge \ge \le)$ We have the following system whose, solutions are on the right
\begin{small}
\begin{align}
\begin{cases}
\left(l_{1}+\varphi l_{2}\right)\left(\varphi l_{1}-l_{2}\right)\geq0\\
\left(l_{1}+\varphi l_{2}\right)^{2}\geq\frac{\lambda_{1}}{q_{11}^{2}}\\
\left(\varphi l_{1}-l_{2}\right)^{2}\leq\frac{\lambda_{2}}{q_{11}^{2}}
\end{cases}&\!\!\!\!\!\!\Longleftrightarrow\begin{cases}
\text{Solution 1}\ge\ge & \text{Solution 2}\le\le\\
l_{1}\geq\frac{\sqrt{\lambda_{1}}}{q_{11}}-\varphi l_{2}\quad\quad\quad\quad\text{ or }\quad & l_{1}\leq-\frac{\sqrt{\lambda_{1}}}{q_{11}}-\varphi l_{2}\\
\frac{1}{\varphi}l_{2}\leq l_{1}\leq\frac{1}{\varphi}\frac{\sqrt{\lambda_{2}}}{q_{11}}+\frac{1}{\varphi}l_{2} & -\frac{1}{\varphi}\frac{\sqrt{\lambda_{2}}}{q_{11}}+\frac{1}{\varphi}l_{2}\leq l_{1}\leq\frac{1}{\varphi}l_{2}. 
\end{cases}
\end{align}
\end{small}
This case is more tedious. 
We choose $l_1=\gamma n $, $l_2 = \gamma \lfloor n \varphi\rfloor $, 
where $n$ is the smallest integer satisfying $\gamma n\geq\sqrt{2\lambda_{1}} $. 
This implies that the first equation of ``Solution 1'' is satisfied: using Equation~\eqref{eq:q_11_varphi_bounds}
\[
\gamma n\geq\sqrt{2\lambda_{1}}\geq\frac{\sqrt{\lambda_{1}}}{q_{11}}\geq\frac{\sqrt{\lambda_{1}}}{q_{11}}-\varphi\gamma\left\lfloor n\varphi\right\rfloor. 
\]
We choose therefore $l_1=\gamma n=\lceil \sqrt{2\lambda_1}\rceil_\gamma\in [0,\sqrt{2\lambda_1}+\gamma  ] $. 
The same bound for $l_2$ holds true since 
$l_2 = \gamma\lfloor n \varphi\rfloor\leq l_1$, again using Equation~\eqref{eq:q_11_varphi_bounds}.\\
It remains to prove that the second equation of ``Solution 1'' is satisfied. 
Dividing the second equation of ``Solution 1'' by $l_2$ we prove the left inequality
\[
\frac{l_{1}}{l_{2}}=\frac{\gamma n}{\gamma\left\lfloor n\varphi\right\rfloor }\geq\frac{n}{n\varphi}=\frac{1}{\varphi}, 
\]
and, since by assumption $\gamma\le\sqrt{\lambda_2}$ we conclude that 
\begin{small}
\begin{align}
\frac{l_{1}}{l_{2}}-\frac{1}{\varphi}=\frac{\gamma n \varphi-\gamma\left\lfloor n \varphi\right\rfloor }{\gamma\left\lfloor n \varphi\right\rfloor \varphi}\leq\frac{\gamma}{\gamma\left\lfloor n \varphi\right\rfloor \varphi}=\frac{\gamma}{l_{2}\varphi}
\implies
\frac{l_{1}}{l_{2}}\leq\frac{1}{\varphi}\left(1+\frac{\gamma}{l_{2}}\right)\leq\frac{1}{\varphi}\left(1+\frac{1}{l_{2}}\frac{\sqrt{\lambda_{2}}}{q_{11}}\right)
\end{align}
\end{small}
because $1\leq\frac{1}{q_{11}}\leq\sqrt{2}$, which is equivalent to the second equation of ``Solution 1''. 

\item $(\ge \le \ge)$ We have the following system, whose solutions are on the right
\begin{small}
\begin{align}
\begin{cases}
\left(l_{1}+\varphi l_{2}\right)\left(\varphi l_{1}-l_{2}\right)\geq0\\
\left(l_{1}+\varphi l_{2}\right)^{2}\leq\frac{\lambda_{1}}{q_{11}^{2}}\\
\left(\varphi l_{1}-l_{2}\right)^{2}\geq\frac{\lambda_{2}}{q_{11}^{2}}
\end{cases}&\!\!\!\!\!\!\!\!\!\Longleftrightarrow\begin{cases}
\text{Solution 1}\ge\ge & \text{Solution 2}\le\le\\
-\varphi l_{2}\leq l_{1}\leq\frac{\sqrt{\lambda_{1}}}{q_{11}}-\varphi l_{2}\quad\text{ or }\quad & -\frac{\sqrt{\lambda_{1}}}{q_{11}}-\varphi l_{2}\leq l_{1}\leq-\varphi l_{2}\\
l_{1}\geq\frac{1}{\varphi}\frac{\sqrt{\lambda_{2}}}{q_{11}}+\frac{1}{\varphi}l_{2} & l_{1}\leq-\frac{1}{\varphi}\frac{\sqrt{\lambda_{2}}}{q_{11}}+\frac{1}{\varphi}l_{2}. 
\end{cases}
\end{align}
\end{small}
In this case we can take 
\begin{small}
\begin{align}
l_{2}=&\left\lfloor -\frac{\sqrt{\lambda_{2}}}{q_{11}}\right\rfloor _{\gamma}\in[-\sqrt{2\lambda_{2}}-\gamma,-\sqrt{\lambda_{2}}],\quad
l_{1}=\left\lceil -\varphi\left\lfloor -\frac{\sqrt{\lambda_{2}}}{q_{11}}\right\rfloor _{\gamma}\right\rceil _{\gamma}\in\left[0,\sqrt{2\lambda_{2}}+2\gamma\right].
\end{align}
\end{small}
This solution has been found choosing $l_2$ so that $ \frac{1}{\varphi}\frac{\sqrt{\lambda_{2}}}{q_{11}}+\frac{1}{\varphi}l_{2}\approx 0$ and then using it in the first equation of ``Solution 1''. 
The second equation of ``Solution 1'' is satisfied since $l_1$ is positive.
For the first one, the left inequality is satisfied by definition of $l_1$, whilst the right inequality because $\gamma\leq\sqrt{\lambda_1}\leq \sqrt{\lambda_1}/q_{11}. $
\item $(\ge \le \le)$ We have the following system, whose solutions are on the right
\begin{small}
\begin{align}
\begin{cases}
\left(l_{1}+\varphi l_{2}\right)\left(\varphi l_{1}-l_{2}\right)\geq0\\
\left(l_{1}+\varphi l_{2}\right)^{2}\leq\frac{\lambda_{1}}{q_{11}^{2}}\\
\left(\varphi l_{1}-l_{2}\right)^{2}\leq\frac{\lambda_{2}}{q_{11}^{2}}
\end{cases}&\!\!\!\!\!\!\!\!\!\Longleftrightarrow\begin{cases}
\text{Solution 1}\ge\ge & \text{Solution 2}\le\le\\
-\varphi l_{2}\leq l_{1}\leq\frac{\sqrt{\lambda_{1}}}{q_{11}}-\varphi l_{2}\quad\text{ or }\quad & -\frac{\sqrt{\lambda_{1}}}{q_{11}}-\varphi l_{2}\leq l_{1}\leq-\varphi l_{2}\\
\frac{1}{\varphi}l_{2}\leq l_{1}\leq\frac{1}{\varphi}\frac{\sqrt{\lambda_{2}}}{q_{11}}+\frac{1}{\varphi}l_{2} & -\frac{1}{\varphi}\frac{\sqrt{\lambda_{2}}}{q_{11}}+\frac{1}{\varphi}l_{2}\leq l_{1}\leq\frac{1}{\varphi}l_{2}. 
\end{cases}
\end{align}
\end{small}
In this case two possible solutions for the case ``Solution 1'' are $(l_1, l_2)=(0, 0)$ and $(l_1, l_2)=(\gamma, 0)$, since by assumption $\gamma\leq \min\{\sqrt{\lambda_1},\sqrt{\lambda_2}\}$.
\item $(\le \ge \ge)$ We have the following system, whose solutions are on the right
\begin{small}
\begin{align}
\begin{cases}
\left(l_{1}+\varphi l_{2}\right)\left(\varphi l_{1}-l_{2}\right)\leq0\\
\left(l_{1}+\varphi l_{2}\right)^{2}\ge\frac{\lambda_{1}}{q_{11}^{2}}\\
\left(\varphi l_{1}-l_{2}\right)^{2}\ge\frac{\lambda_{2}}{q_{11}^{2}}
\end{cases}&\!\!\!\!\!\!\Longleftrightarrow\begin{cases}
\text{Solution 1}\ge\le & \text{Solution 2}\le\ge\\
l_{1}\geq\frac{\sqrt{\lambda_{1}}}{q_{11}}-\varphi l_{2}\quad\quad\text{ or }\quad & l_{1}\leq-\frac{\sqrt{\lambda_{1}}}{q_{11}}-\varphi l_{2}\\
l_{1}\leq-\frac{1}{\varphi}\frac{\sqrt{\lambda_{2}}}{q_{11}}+\frac{1}{\varphi}l_{2} & l_{1}\geq\frac{1}{\varphi}\frac{\sqrt{\lambda_{2}}}{q_{11}}+\frac{1}{\varphi}l_{2}. 
\end{cases}
\end{align}
\end{small}
We could take simply $l_1=0$ and $l_2$ big enough, but this solution would be unbounded (depending on $1/\varphi$). 
Therefore, we reason similarly to the case $(\ge \ge \ge)$: let us rewrite the equations for ``Solution 1'' respect to  $l_2$
\begin{align}
l_{2}\geq&\frac{1}{\varphi}\frac{\sqrt{\lambda_{1}}}{q_{11}}-\frac{1}{\varphi}l_{1},\quad\quad\quad
l_{2}\geq\frac{\sqrt{\lambda_{2}}}{q_{11}}+\varphi l_{1}, 
\end{align}
and we can take as a solution
\begin{align}
l_{1}=&\left\lfloor \frac{\sqrt{\lambda_{1}}}{q_{11}}\right\rfloor _{\gamma}\in\left[\sqrt{\lambda_{1}}-\gamma,\sqrt{2\lambda_{1}}\right]\\l_{2}=&\left\lceil \frac{\sqrt{\lambda_{2}}}{q_{11}}+\varphi\left\lfloor \frac{\sqrt{\lambda_{1}}}{q_{11}}\right\rfloor _{\gamma}\right\rceil _{\gamma}\in\left[\sqrt{\lambda_{2}},\sqrt{2\lambda_{2}}+\sqrt{2\lambda_{1}}+2\gamma\right].
\end{align}
This solution has been found choosing $l_1$ so that $ \frac{1}{\varphi}\frac{\sqrt{\lambda_{1}}}{q_{11}}-\frac{1}{\varphi}l_{1}\approx 0$ and then using it in the other equation.
\item $(\le \ge \le)$ We have the following system, whose solutions are on the right
\begin{small}
\begin{align}
\begin{cases}
\left(l_{1}+\varphi l_{2}\right)\left(\varphi l_{1}-l_{2}\right)\leq0\\
\left(l_{1}+\varphi l_{2}\right)^{2}\ge\frac{\lambda_{1}}{q_{11}^{2}}\\
\left(\varphi l_{1}-l_{2}\right)^{2}\leq\frac{\lambda_{2}}{q_{11}^{2}}
\end{cases}&\!\!\!\!\!\!\!\!\!\Longleftrightarrow\begin{cases}
\text{Solution 1}\ge\le & \text{Solution 2}\le\ge\\
l_{1}\geq\frac{\sqrt{\lambda_{1}}}{q_{11}}-\varphi l_{2}\quad\quad\quad\quad\text{ or }\quad & l_{1}\leq-\frac{\sqrt{\lambda_{1}}}{q_{11}}-\varphi l_{2}\\
-\frac{1}{\varphi}\frac{\sqrt{\lambda_{2}}}{q_{11}}+\frac{1}{\varphi}l_{2}\leq l_{1}\leq\frac{1}{\varphi}l_{2} & \frac{1}{\varphi}l_{2}\leq l_{1}\leq\frac{1}{\varphi}\frac{\sqrt{\lambda_{2}}}{q_{11}}+\frac{1}{\varphi}l_{2}. 
\end{cases}
\end{align}
\end{small} 
We choose $l_1=\gamma n $, $l_2 = \gamma \lceil n \varphi\rceil $, 
where $n$ is the smallest integer satisfying $\gamma n\geq\sqrt{2\lambda_{1}} $. 
This implies that the first equation of ``Solution 1'' is satisfied: using Equation~\eqref{eq:q_11_varphi_bounds}
\[
\gamma n\geq\sqrt{2\lambda_{1}}\geq\frac{\sqrt{\lambda_{1}}}{q_{11}}\geq\frac{\sqrt{\lambda_{1}}}{q_{11}}-\varphi\gamma\left\lceil n\varphi\right\rceil. 
\]
We choose therefore $l_1=\gamma n=\lceil \sqrt{2\lambda_1}\rceil_\gamma\in [0,\sqrt{2\lambda_1}+\gamma  ] $. 
The same bound for $l_2$ holds true since 
$l_2 = \gamma\lceil n \varphi\rceil\leq l_1$, again using Equation~\eqref{eq:q_11_varphi_bounds}.\\
It remains to prove that the second equation of ``Solution 1'' is satisfied. 
Dividing the second equation of ``Solution 1'' by $l_2$ we prove the left inequality
\[
\frac{l_{1}}{l_{2}}=\frac{\gamma n}{\gamma\lceil n\varphi\rceil }\leq\frac{n}{n\varphi}=\frac{1}{\varphi}, 
\]
and, since by assumption $\gamma\le\sqrt{\lambda_2}$ we conclude that 
\begin{small}
\begin{align}
\frac{l_{1}}{l_{2}}-\frac{1}{\varphi}=\frac{\gamma n \varphi-\gamma\left\lceil n \varphi\right\rceil }{\gamma\left\lceil n \varphi\right\rceil \varphi}
\geq\frac{-\gamma}{\gamma\left\lfloor n \varphi\right\rfloor \varphi}=\frac{-\gamma}{l_{2}\varphi}
\implies
\frac{l_{1}}{l_{2}}\geq\frac{1}{\varphi}\left(1-\frac{\gamma}{l_{2}}\right)\geq\frac{1}{\varphi}\left(1-\frac{1}{l_{2}}\frac{\sqrt{\lambda_{2}}}{q_{11}}\right)
\end{align}
\end{small}
because $-1\!\geq\!-\frac{1}{q_{11}}\!\geq\!-\sqrt{2}$, which is equivalent to the second equation of Solution 1. 
\item $(\le \le \ge)$ We have the following system, whose solutions are on the right
\begin{small}
\begin{align}
\begin{cases}
\left(l_{1}+\varphi l_{2}\right)\left(\varphi l_{1}-l_{2}\right)\leq0\\
\left(l_{1}+\varphi l_{2}\right)^{2}\leq\frac{\lambda_{1}}{q_{11}^{2}}\\
\left(\varphi l_{1}-l_{2}\right)^{2}\geq\frac{\lambda_{2}}{q_{11}^{2}}
\end{cases}&\!\!\!\!\!\!\!\!\!\Longleftrightarrow\begin{cases}
\text{Solution 1}\ge\le & \text{Solution 2}\le\ge\\
-\varphi l_{2}\leq l_{1}\leq\frac{\sqrt{\lambda_{1}}}{q_{11}}-\varphi l_{2}\quad\text{ or }\quad & -\frac{\sqrt{\lambda_{1}}}{q_{11}}-\varphi l_{2}\leq l_{1}\leq-\varphi l_{2}\\
l_{1}\leq-\frac{1}{\varphi}\frac{\sqrt{\lambda_{2}}}{q_{11}}+\frac{1}{\varphi}l_{2} & l_{1}\geq\frac{1}{\varphi}\frac{\sqrt{\lambda_{2}}}{q_{11}}+\frac{1}{\varphi}l_{2}. 
\end{cases}
\end{align}
\end{small}
We can proceed as in the case $\ge \le \ge$:
we can take 
\begin{align}
l_{2}=&\left\lceil \frac{\sqrt{\lambda_{2}}}{q_{11}}\right\rceil _{\gamma}\in\left[\sqrt{\lambda_{2}},\sqrt{2\lambda_{2}}+\gamma\right],\quad\quad
l_{1}=\left\lceil -\varphi\left\lceil \frac{\sqrt{\lambda_{2}}}{q_{11}}\right\rceil _{\gamma}\right\rceil _{\gamma}\in\left[-\sqrt{2\lambda_{2}}-\gamma,0\right]. 
\end{align} 
This solution has been found choosing $l_2$ so that $ \frac{1}{\varphi}\frac{\sqrt{\lambda_{2}}}{q_{11}}+\frac{1}{\varphi}l_{2}\approx 0$ and then using it in the first equation of ``Solution 1''. 
The second equation of ``Solution 1'' is satisfied since $l_1$ is negative.
For the first one, the left inequality is satisfied by definition of $l_1$, whilst the right inequality because $\gamma\leq\sqrt{\lambda_1}\leq \sqrt{\lambda_1}/q_{11}. $
\item $(\le \le \le)$ 
In this case it is easy to see that we can take the origin.
\end{itemize}
In all the cases we have studied the ``Solution 1'', however, for the ``Solution 2'' we can take $-(l_1, l_2)$.
This concludes the proof for the case $\ma=0$. 

For a general $a\in\R^d$ we can build for any $\gamma>0$ a random variable $\tilde Y$ with support on $\gamma\Z^d$ such that $\EE\tilde Y=\ma$. 
This follows from Carath\'eodory's Theorem~\ref{th:cath}, considering the smallest hypercube with vertices in $\gamma\Z^d$ that contains $\ma$, that is of the $2^d$ possible choices of $\sim\in\{\le,\ge\}^d$ determines a point $y_\sim$ on the hypercube around $\ma$ s.t. 
\begin{align}
  y_\sim\in\gamma\Z^{d}  \text{ and } y_\sim \sim \ma.  
\end{align}
We denote the resulting $2^d$ points as $y_1,\ldots,y_{2^d}$ and now estimate the variance $\VV[\tilde Y]$,
\begin{align}
\left|\VV[\tilde Y]\right|=\left|\EE\left[\left(\tilde Y-\ma\right)^{\otimes2}\right]\right|=&\left|\sum_{{i}}\tilde{p}_{i}\left(y_{{i}}-\ma\right)^{\otimes2}\right|\leq\sum_{i}\tilde{p}_{i}|y_{{i}}-\ma|^{2}\leq d\gamma^{2}.
\end{align}
We now build a random variable $Y_\star$ supported on $\gamma\Z^2$ ($d=2$) that is independent of $Y$ and has mean and variance 
$
\EE Y_\star=0 \text{ and }\VV Y_\star=\EE Y_\star^{\otimes 2}=\vB-\VV\tilde Y.
$
It follows that
$
  Y \coloneqq \tilde Y+Y_\star 
$
has mean and variance 
$
  \EE Y=\ma \text{ and }\VV Y=\vB. 
$
To build $Y_\star$ we use the first part of this proof.
Therefore we need to verify that the eigenvalues of $\vB-\VV\tilde Y$ are strictly positive. 
Since $\VV\tilde Y$ and $\vB$ are symmetric it follows from the Hoffman--Wielandt Theorem, Theorem~\ref{th:hoff}, that {there exists a permutation $\pi$ such that for any $k$}
\begin{align}
-|\VV\tilde Y|\leq\lambda_{k}\left[\vB\right.&\left.-\VV\tilde Y\right]-\lambda_{\pi(k)}\left[\vB\right]\leq|\VV\tilde Y|, 
\end{align}
where $\lambda_k[\vB-\VV\tilde Y]$ indicates the $k$-th eigenvalue of $\vB-\VV\tilde Y$. 
Therefore, 
\[
\lambda_{\min}[\vB-\VV\tilde{Y}]\geq \lambda_{\pi(\min)}\left[\vB\right]-|\VV\tilde{Y}|\geq\lambda_{\min}-2\gamma^{2}.
\]
Hence, choosing $\gamma\leq\sqrt{\lambda_{\min}/3}$ guarantees $\gamma\le\sqrt{\lambda_{\min}[\vB-\VV\tilde{Y}]}$ since
\[
\lambda_{\min}-2\gamma^{2}\ge\gamma^{2}\Longleftrightarrow\gamma\leq\sqrt{\lambda_{\min}/3}.
\] 
Note that the cardinality of the support of $Y = \tilde{Y}+Y_\star$, eventually, can be reduced using the Caratheodory's Theorem~\ref{th:cath}.  
We have proved that, by construction, the support of the r.v. $Y_\star$ is included in 
\[
\operatorname{supp}(Y_{\star})\subset\{y\in\gamma\Z^{2}:|y|_{\infty}\leq\sqrt{2\lambda_{1}[B-\VV\tilde{Y}]}+\sqrt{2\lambda_{2}[B-\VV\tilde{Y}]}+2\gamma\}.
\]
Thanks again to the Hoffman--Wielandt Theorem, Theorem~\ref{th:hoff} we have that
\begin{align}
\sqrt{2\lambda_{1}[\vB-\VV\tilde{Y}]}+\sqrt{2\lambda_{2}[\vB-\VV\tilde{Y}]}\leq&\sqrt{2\left(|\VV\tilde{Y}|+\lambda_{\pi(1)}\left[\vB\right]\right)}+\sqrt{2\left(|\VV\tilde{Y}|+\lambda_{\pi(2)}\left[\vB\right]\right)}\\
\leq&\sqrt{2\lambda_{\pi(1)}[\vB]}+\sqrt{2\lambda_{\pi(2)}[\vB]}+2\sqrt{2|\VV\tilde{Y}|}\\\le&\sqrt{2\lambda_{1}[\vB]}+\sqrt{2\lambda_{2}[\vB]}+2\gamma\sqrt{2d}.
\end{align}
This implies that the support of $Y=\tilde Y + Y_\star$ is included in 
\begin{align}
\operatorname{supp}(Y)\subset\{y\in\gamma\Z^{2}:|y|_{\infty}\leq |\ma|_\infty+\sqrt{2\lambda_{1}[B]}+\sqrt{2\lambda_{2}[B]}+6\gamma\}.
\end{align}
\vspace{-0.25cm}
\end{proof}

\begin{remark}
  We conjecture that Theorem~\ref{th:t_d=2} generalizes to dimensions higher than $d=2$, however the above brute-force proof strategy is not helpful, since it is infeasible to solve all the inequalities one by one for the equivalent of System~\eqref{eq:system_simplified} in higher dimensions. 
  Concretely, we believe that for $\ma \in \R^d$, $\vB \in \R^{d\times d}$ symmetric and positive definite, 
  $\lambda_{\min}>0$, then for every $\gamma$ such that $0<\gamma \le \sqrt{\lambda_{\min} / (d+1)}$ there exists a random variable $Y$ such that
  \begin{enumerate}
  \item $\EE[ Y]=\ma$ and $\EE[ Y^{\otimes 2}]=\ma^{\otimes 2}+\vB$; 	
  \item $\operatorname{supp}(Y)\subset \{ y\in \gamma \Z^2 : |y|_\infty \leq |\ma|_\infty + \sum_{i=1}^d\sqrt{2\lambda_i}+ c_d\gamma \}$. 
  \end{enumerate}
  Note that it only remains to show the existence of a random variable $ Y_\star$, such that $\EE[ Y_\star]=0$ and $\EE[  Y_\star^{\otimes 2}]=\vB$ for any positive definite matrix $\vB$, since this allows to simply proceed as in the second part of the proof of Theorem~\ref{th:t_d=2}. 

\end{remark}
\subsection*{Constructing the Lattice-Tree Model}%
Now we are ready to build the Markov chain $X^n$, which approximates $X$ for the 2-dimensional case and such that the state space recombines if the SDE coefficients are bounded. We denote $\Sigma = \sigma(x)\sigma(x)^T$ and with $\lambda_i(x)$ its eigenvalues. 
\begin{theorem}\label{th:mm_d=2}
  Let $\mu, \sigma \in C^4_b$, $d=2$, $\epsilon\coloneqq  \inf_x \lambda_{\min} (x)>0$ and %
  $\gamma_n=n^{-1/2}\sqrt{\epsilon/3}$. 
  There exists a lattice approximation $(X^n)_{n \ge 1}$ to $X$ on $X_0^n\cup\gamma_n\Z^2$  with respect to the class of functions $f\in C^3_P$ with rate
   $1/2$. %
  Moreover, 
 \begin{align}
 \begin{cases}
\operatorname{card}[\operatorname{supp}(X_{i}^{n})]\leq ci^{2} & ,\text{ if \ensuremath{\mu,\sigma} are bounded,}\\
\operatorname{card}[\operatorname{supp}(X_{i}^{n})]\leq ci^{2}{\exp(2cin^{-1/2})} & ,\text{ if \ensuremath{\mu,\sigma} have linear growth.}
\end{cases}
 \end{align}
\end{theorem}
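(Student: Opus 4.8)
The plan is to reproduce the argument used in the proof of Theorem~\ref{th:mm_d=1}, with the one-dimensional moment-matching result Theorem~\ref{th:t_d=1} replaced by its two-dimensional analogue Theorem~\ref{th:t_d=2}. Fix $x \in \R^2$ and apply Theorem~\ref{th:t_d=2} with $\ma = \mu(x) n^{-1}$ and $\vB = \Sigma(x) n^{-1}$. The matrix $\Sigma(x) = \sigma(x)\sigma(x)^\top$ is symmetric, and by the ellipticity hypothesis its eigenvalues satisfy $\lambda_i(x) \ge \epsilon > 0$, so $\vB$ is positive definite with $\lambda_{\min}[\vB] = \lambda_{\min}(x) n^{-1} \ge \epsilon n^{-1}$; hence the admissibility condition $0 < \gamma_n \le \sqrt{\lambda_{\min}[\vB]/3}$ holds \emph{uniformly in $x$} for the choice $\gamma_n = n^{-1/2}\sqrt{\epsilon/3}$. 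Theorem~\ref{th:t_d=2} then furnishes a $\gamma_n\Z^2$-valued random variable $Y_x$ with $\EE[Y_x] = \mu(x) n^{-1}$ and $\EE[Y_x^{\otimes 2}] = \mu(x)^{\otimes 2} n^{-2} + \Sigma(x) n^{-1}$, supported in the $\ell^\infty$-ball of radius $|\mu(x)|_\infty n^{-1} + \sqrt{2\lambda_1(x) n^{-1}} + \sqrt{2\lambda_2(x) n^{-1}} + 6\gamma_n$. Filling out the transition matrix $P^{(n)}$ row by row as in Algorithm~\ref{algo:tree_recomb} defines a Markov chain $X^n$ on the state space $\gamma_n\Z^2$; if $X_0^n \notin \gamma_n\Z^2$ one uses the same shift as in the proof of Theorem~\ref{th:mm_d=1}, building the first increment on $\gamma_n\Z^2 - X_0^n$, so that $X^n$ is supported on $X_0^n \cup \gamma_n\Z^2$.

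For the convergence rate I would verify the hypotheses of Lemma~\ref{lemma: convergence assumptions}. Item~\eqref{it:my_local_consistency} holds with an arbitrary $\alpha$ because the first two local moments of $X^n$ match those of $X$ \emph{exactly}. Item~\eqref{it: uniform moments} follows from Lemma~\ref{lem:maximum_scheme}, whose requirement $n\gamma_n^2 < c$ is met since $n\gamma_n^2 = \epsilon/3$. For Item~\eqref{it:6power_conditioned}, linear growth of $\sigma$ gives $\lambda_i(x) \le \operatorname{tr}\Sigma(x) \le c(1+|x|)^2$ and linear growth of $\mu$ gives $|\mu(x)|_\infty \le c(1+|x|)$, so the support of $Y_x$ lies in an $\ell^\infty$-ball of radius at most $c\,n^{-1/2}(1+|x|)$, whence $\EE|Y_x|^6 \le c(1+|x|^6) n^{-3}$; applied at $x = X_i^n$ this is exactly the bound required with $\alpha = 1/2$ and $q = 6$. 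Lemma~\ref{lemma: convergence assumptions} then yields weak convergence to $X$ with rate $\min\{\alpha, 1/2\} = 1/2$ for $f \in C^3_P$, which is the claimed rate (equivalent to the rate of Definition~\ref{def:conv}).

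It remains to control the growth of the support. If $\mu,\sigma$ are bounded then the support radius of $Y_x$ is $O(n^{-1/2})$ uniformly in $x$, so $|X_i^n|_\infty \le c\,i\,n^{-1/2}$; since the number of points of $\gamma_n\Z^2$ inside an $\ell^\infty$-ball of radius $R$ is $O((R/\gamma_n)^2)$ and $\gamma_n \asymp n^{-1/2}$, this gives $\operatorname{card}[\operatorname{supp}(X_i^n)] \le c\,i^2$. If $\mu,\sigma$ have linear growth then $|Y_x|_\infty \le c\,n^{-1/2}(1+|x|)$, so $|X_i^n| \le \sum_{j=0}^{i-1} |Y_{X_j^n}|$ and the discrete Gr\"onwall inequality (Lemma~\ref{lem:Gronw_dis}) yields $|X_i^n| \le c\,i\,n^{-1/2}\exp(c\,i\,n^{-1/2})$, hence $\operatorname{card}[\operatorname{supp}(X_i^n)] \le c\,i^2 \exp(2 c\,i\,n^{-1/2})$.

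The genuinely difficult step — solving the lattice-constrained moment system in dimension two — has already been absorbed into Theorem~\ref{th:t_d=2}, so the main obstacle here is the bookkeeping: propagating the $x$-dependence of the support radius of $Y_x$ through the eigenvalue estimates so that both the sixth-moment bound and the Gr\"onwall argument come out with the correct powers of $n$, and checking that the single constant $\sqrt{\epsilon/3}$ renders $\gamma_n$ admissible in Theorem~\ref{th:t_d=2} simultaneously for every reachable state $x$.
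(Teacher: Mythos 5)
Your proposal is correct and follows essentially the same route as the paper: reduce to the two-dimensional moment-matching result (Theorem~\ref{th:t_d=2}) with $\ma=\mu(x)n^{-1}$, $\vB=\Sigma(x)n^{-1}$, verify the hypotheses of Lemma~\ref{lemma: convergence assumptions} (with Lemma~\ref{lem:maximum_scheme} and the sixth-moment bound), handle an off-lattice $X_0^n$ by the shifted-lattice construction, and count lattice points in a square of side $O(i n^{-1/2})$ (with Gr\"onwall in the linear-growth case) to get the $i^2$ cardinality bounds. The only cosmetic difference is that you bound the eigenvalues of $\Sigma(x)$ by its trace where the paper invokes the Gerschgorin circle theorem, which changes nothing of substance.
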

\begin{proof}
We argue as in Theorem~\ref{th:mm_d=1}, but using Theorem~\ref{th:t_d=2} above.
In contrast to the one-dimensional case, the support of the random variable $Y_x$ that gives the increment $X^n_{i+1}- X^n_i$ when $X^n_i=x$ now depends also on the eigenvalues of $\Sigma$. 
However, the Gerschgorin's Circle Theorem \cite{Gerschgorin1931}, Theorem~\ref{th:Gerschgorin}, bounds the eigenvalues of $\Sigma$.  
We conclude by noting that the number of points of the lattice $\gamma_n\Z^2$ in a square of side $c$ are $(c/\gamma_n)^2$, 
which yields the square factor in the cardinality of the support.\\  
If $X^n_0$ is not in the lattice $\gamma_n\Z^2$ more care is needed: for $X_1^n$ to be supported on $\gamma_n\Z^2$ it is necessary to build the random variable $Y_{X^n_0}$ with support on the lattice $\gamma_n\Z^2-X_0^n$, indeed $X^n_1 = X^n_0+Y_{X^n_0}$ would be in $\gamma_n\Z^2$. 
This is equivalent to build the random variable $\tilde Y_{X^n_0}$ on $\gamma_n\Z^2$ such that 
$
  \EE\tilde Y_{X^n_0} = \mu(X_0^n)+X_0^n$ and $ \EE\tilde Y_{X^n_0}^{2} = \Sigma(X_0^n)+ (\mu(X_0^n)+X_0^n)^{\otimes2}. 
$
We define $Y_{X^n_0}\coloneqq \tilde Y_{X^n_0} -X_0^n$ with support on $\gamma_n\Z^2-X_0$ s.t. 
$
  \EE Y_{X^n_0} = \mu(X_0^n) $ and $\VV Y_{X^n_0}= \Sigma(X_0^n).
$
To build $\tilde Y_{X^n_0}$ using Theorem~\ref{th:t_d=2}, we need to bound the eigenvalues of $\Sigma(X_0^n)+ (\mu(X_0^n)+X_0^n)^{\otimes2}$ using that 
\begin{align}
\lambda_{\min}[\Sigma(X_{0}^{n})+(\mu(X_{0}^{n})+X_{0}^{n})^{\otimes2}]=&\inf_{d\not=0}\frac{d^{T}[\Sigma(X_{0}^{n})+(\mu(X_{0}^{n})+X_{0}^{n})^{\otimes2}]d}{d^{t}d}\\\geq&\inf_{d\not=0}\frac{d^{T}\Sigma(X_{0}^{n})d}{d^{T}d}+\inf_{e\not=0}\frac{e^{T}(\mu(X_{0}^{n})+X_{0}^{n})^{\otimes2}e}{e^{T}e}\\\geq&\lambda_{\min}[\Sigma(X_{0}^{n})]+\lambda_{\min}[(\mu(X_{0}^{n})+X_{0}^{n})^{\otimes2}]\\\geq&\lambda_{\min}[\Sigma(X_{0}^{n})],
\end{align}
noting that $\lambda_{\min}[(\mu(X_{0}^{n})+X_{0}^{n})^{\otimes2}]\ge0$, which concludes the proof.
\end{proof}

\section{Lattice-Tree Models for Multi-Dimensional Diffusions}\label{sec: Models for Multi-Dimensional Diffusions}
We now carry out the same procedure in the multi-dimensional case. In this case, our assumptions will be more stringent and we will consider the lattice to depend on the structure of the matrix $\vB$. 
\subsection*{Matching the Moments}%
In this Subsection we solve System~\eqref{eq:fundamental_system} for a general dimension $d$, but on pre-specified lattice. 
We call $Q \in O(d)$ the orthonormal matrix such that $\vB=Q\Lambda Q^T$, with $\Lambda$ the diagonal matrix consisting of the eigenvalues of $\vB$. 

\begin{theorem}\label{th:t_d=d}
  Let $\vB \in \R^{d\times d}$ symmetric and positive {semi-}definite, $\gamma>0$, %
  then there exists a random variable $Y$ such that
  \begin{enumerate}
  \item $\E[Y]=0$ and $\EE [Y^{\otimes 2}]=\vB$;  
  \item $\operatorname{supp}(Y) \subset \{ y \in \gamma Q \Z^d : |y|_\infty \le \sqrt{ \sum_i \lambda_i} \cdot \max_i | Q e_i |+\gamma\}$. 
  \end{enumerate}
\end{theorem}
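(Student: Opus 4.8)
The plan is to reduce to the diagonal case. Diagonalize $\vB = Q\Lambda Q^\top$ with $\Lambda = \diag(\lambda_1,\dots,\lambda_d)$ and $\lambda_i \ge 0$, and look for $Y$ of the form $Y = QZ$ with $Z$ a random variable supported on $\gamma\Z^d$. Then $\E[Y] = Q\,\E[Z]$ and $\E[Y^{\otimes 2}] = Q\,\E[Z^{\otimes 2}]\,Q^\top$, so it suffices to produce a $\gamma\Z^d$-valued $Z$ with $\E[Z] = 0$ and $\E[Z^{\otimes 2}] = \Lambda$, i.e.\ with mean zero and uncorrelated coordinates of variances $\lambda_1,\dots,\lambda_d$.

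First I would dispose of the degenerate case $\vB = 0$ (equivalently $\tr\vB = 0$) by taking $Y \equiv 0$, whose one-point support lies in the claimed set since $0 \le \gamma$. For $\tr\vB = \sum_i \lambda_i > 0$, I would place all the mass of $Z$ on the coordinate axes using a \emph{single} common scale: set $c := \ceil{\sqrt{\tr\vB}}_\gamma \in \gamma\Z_{>0}$, so that $c^2 > \tr\vB$ and $c \le \sqrt{\tr\vB} + \gamma$, put $p_i := \lambda_i/c^2 \ge 0$, and let $Z$ take the value $c\,e_i$ with probability $p_i/2$, the value $-c\,e_i$ with probability $p_i/2$ (for $i = 1,\dots,d$), and the value $0$ with the remaining probability $1 - \sum_i p_i$. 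This is a genuine probability law because $\sum_i p_i = (\tr\vB)/c^2 < 1$; this is the key design point, since using one common scale tied to $\tr\vB$ (rather than separate per-axis scales as in Corollary~\ref{cor:mu=0_dim1}) is exactly what makes the masses sum to at most $1$ while keeping every atom within Euclidean distance $c$ of the origin. The sign symmetry gives $\E[Z] = 0$, and $\E[Z^{\otimes 2}] = \sum_i p_i\, c^2\, e_i e_i^\top = \sum_i \lambda_i\, e_i e_i^\top = \Lambda$.

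Finally I would set $Y := QZ$ and read off the statement. We get $\E[Y] = 0$ and $\E[Y^{\otimes 2}] = Q\Lambda Q^\top = \vB$. Since $c \in \gamma\Z$, every atom $0, \pm c\,e_i$ of $Z$ lies in $\gamma\Z^d$, so every atom of $Y$ lies in $\gamma Q\Z^d$. For the support radius, each nonzero atom of $Y$ equals $\pm c\, Qe_i$, hence $|Y|_\infty = c\,|Qe_i|_\infty \le (\sqrt{\tr\vB} + \gamma)\,\max_i |Qe_i| \le \sqrt{\tr\vB}\cdot\max_i|Qe_i| + \gamma$, where the last inequality uses $|Qe_i|_\infty \le |Qe_i|_2 = 1$; this is precisely the claimed bound with $\sqrt{\sum_i \lambda_i} = \sqrt{\tr\vB}$.

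There is essentially no hard step once the reduction $Y = QZ$ is made; the only point needing a little care is the choice of $c$, which has to be a single multiple of $\gamma$ large enough that the axis-masses $p_i = \lambda_i/c^2$ sum to at most one, and this is exactly what forces the support radius to be governed by $\sqrt{\tr\vB}$ rather than by $\max_i\sqrt{\lambda_i}$. If a smaller support were desired one could afterwards thin the $(2d+1)$-point law of $Z$ (or of $Y$) by Carath\'eodory's Theorem~\ref{th:cath}, as done elsewhere in the paper, but this is not needed for the statement.
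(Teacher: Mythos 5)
Your proposal is correct and follows essentially the same route as the paper's proof: diagonalize $\vB=Q\Lambda Q^{\top}$, work on the rotated lattice $\gamma Q\Z^d$, place mass proportional to $\lambda_i$ on $\pm\ceil*{\sqrt{\tr\vB}}_{\gamma}\,Qe_i$ with the leftover mass at the origin, and symmetrize to kill the mean. The only differences are cosmetic (you assign one atom per coordinate axis instead of grouping equal eigenvalues by multiplicity, and you symmetrize directly rather than via an independent sign variable), and your support estimate matches the stated bound.
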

\begin{proof}%
If all the eigenvalues $\lambda_i$ of $\vB$ are 0, then $Y=0$ a.s. is a solution, so let us suppose that at least one eigenvalue is strictly positive.

Recall that, from the proof of Theorem~\ref{th:t_d=2}, if $\bar Y$ is a discrete random variable that matches the second moment, $\EE \bar Y^{\otimes 2}=\vB$ and $Z$ is a Bernoulli random variable independent of $\bar Y $, 
$\Prob(Z=\pm 1)=\frac{1}{2}$,
then the random variable $Y\coloneqq Z \cdot \bar Y$ has mean $\EE[Y]=0$ and $\EE[Y^{\otimes 2}]=\vB$. 
  Hence, solving the system~\eqref{eq:fundamental_system} reduces to find 
$\bar{l}_i\in\gamma Q\Z^d$ s.t. 
\[
\sum_{i}\bar p_{i}\bar{l}_{i} \bar{l}_{i}^T =\vB,\qquad
\bar p_{i}>0,\,\sum_{i}\bar p_{i}=1, 
\]
then the measure on the points $\bar{l}_i$, $-\bar{l}_i$ with weights respectively $p_{i}=\bar p_i/2$ represents the r.v. $Y$ we are looking for. 
Since we suppose $\vB$ to be positive semi-definite and symmetric, there exists an orthonormal matrix $Q$ such that 
$\vB=Q\Lambda Q^{T}$, whose columns are the eigenvectors of $\vB$, and $\Lambda$ is a diagonal matrix whose values are the non-negative eigenvalues. 
If we indicate $\bar l_i=\gamma Q \bar z_i$, $\bar z_i\in\Z^d$, we have that 
\[
\sum_{i}^{r}\bar{p}_{i}\bar{l}_{i}\bar{l}_{i}^{T}=\vB=Q\Lambda Q^{T}\Leftrightarrow\sum_{i}^{r}\bar{p}_{i}\gamma^{2}Q\bar{z}_{i}\bar{z}_{i}^{T}Q^{T}=Q\Lambda Q^{T}\Leftrightarrow\sum_{i}^{r}\bar{p}_{i}\gamma^{2}\bar{z}_{i}\bar{z}_{i}^{T}=\Lambda.
\]
Therefore, rearranging the last equation we obtain
\begin{align}
\sum_{i}^{r}p_{i}\gamma^{2}\bar{z}_{i}\bar{z}_{i}^{T}=\sum_{j=1}^{d}\frac{\lambda_{j}}{\sum_{k=1}^{d}\lambda_{k}}\left(\sqrt{\sum_{k=1}^{d}\lambda_{k}}\cdot e_{j}\right)^{\otimes2}.
\end{align}
Since $r$ is a free parameter for us, we can take %
$r=d^*+1$, where $d^*$ is the number of different eigenvalues not equal to $0$, and we can solve the problem independently for any eigenvalue, then we merge together all the solutions. Thus, we obtain 
\begin{align}
\bar{l}_{d^{*}+1}=&\boldsymbol{0},&\bar{p}_{d^{*}+1}=&1-\left\{ \frac{\sum_{j=1}^{d}\lambda_{j}}{\left\lceil \sqrt{\sum_{j=1}^{d}\lambda_{j}}\right\rceil _{\gamma}^{2}}\right\}, \\\bar{l}_{i}=&Q\left\lceil \sqrt{\sum_{j=1}^{d}\lambda_{j}}\right\rceil _{\gamma}e_{i},&\bar{p}_{i}=&\frac{m(\lambda_{i})\lambda_{i}}{\sum_{j=1}^{d}\lambda_{j}}\left\{ \frac{\sum_{j=1}^{d}\lambda_{j}}{\left\lceil \sqrt{\sum_{j=1}^{d}\lambda_{j}}\right\rceil _{\gamma}^{2}}\right\},  
\end{align}
where  $m(\lambda_i)$ is the algebraic multiplicity of the eigenvalue $\lambda_i$. 
At this point we have a probability measure comprising the $d^*+1$ weights $\bar{p}_i$ and the points $\bar{l}_1, ..., \bar{l}_{d^*}, \boldsymbol{0}\in \gamma Q\Z^d$ and we refer to this random variable by $\bar Y$. 
We can now build the r.v. $Y$ taking the $2d^*+1$ points $\{\bar l_i, - \bar l_i\}$ 
with probability $p_i=\bar p_i/2$, 
such that $\EE  Y = 0$ and $\EE  Y^{\otimes 2} = \vB$. 
\end{proof}

\subsection*{Constructing the Lattice-Tree Model}%
Although we have not proved Theorem~\ref{th:mm_d=2} for general dimension $d$, we can still prove one particular case. We are ready to build the Markov chain $X^n$ which approximates $X$ in the multi-dimensional case under more stringent conditions than before. 
\begin{theorem}\label{th:mm_d=d}
  Let $\mu(x)=0, \sigma \in C^4_b$, $d\ge2$, %
  $\Sigma(x) = Q\Lambda(x)Q^T$, 
  $\Lambda(x)\in\R^{d\times d}$ a diagonal matrix and 
  $Q$ an orthogonal matrix. 
  For any $c>0$, set $\gamma_n= c n^{-1/2}$ and let us suppose that $X_0^n\in\gamma_nQ\Z^d$. 
   There exists a lattice approximation $(X^n)_{n \ge 1}$ to $X$ on $\gamma_nQ\Z^d$ respect to the class of functions $f\in C^3_P$ with rate
   {$ 1/2$}. 
  Moreover, 
  \begin{align}
\begin{cases}
\operatorname{card}[\operatorname{supp}(X_{i}^{n})]\leq i^{d}c.& ,\text{ if \ensuremath{\mu,\sigma} are bounded,}\\
\operatorname{card}[\operatorname{supp}(X_{i}^{n})]\leq ci^{d}{\exp(dcin^{-1/2})} & ,\text{ f \ensuremath{\mu,\sigma} have linear growth.}
\end{cases}
    \end{align}
\end{theorem}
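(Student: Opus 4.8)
The plan is to mirror the structure of the proofs of Theorems~\ref{th:mm_d=1} and~\ref{th:mm_d=2}, replacing the one- and two-dimensional moment-matching lemmas with Theorem~\ref{th:t_d=d}. First I would apply Theorem~\ref{th:t_d=d} pointwise: for each lattice site $x$, set $\vB = n^{-1}\Sigma(x) = n^{-1} Q\Lambda(x) Q^\top$, which is symmetric positive semi-definite and diagonalised by the \emph{fixed} orthogonal matrix $Q$. Since $\mu \equiv 0$, the ``$\ma$-part'' of the construction is trivial, so Theorem~\ref{th:t_d=d} directly produces a random variable $Y_x$ with $\EE[Y_x]=0$, $\EE[Y_x^{\otimes 2}] = n^{-1}\Sigma(x)$, and support contained in $\gamma_n Q\Z^d$ with $|y|_\infty \le \sqrt{\sum_i n^{-1}\lambda_i(x)}\cdot\max_i|Qe_i| + \gamma_n$. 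The crucial point, exactly as flagged in the theorem's hypotheses, is that $Q$ does not depend on $x$: this is what makes $\gamma_n Q\Z^d$ a genuine common lattice for all the increments, so that $X^n_{i+1} = X^n_i + Y_{X^n_i}$ stays on $\gamma_n Q\Z^d$ provided $X^n_0$ does. Assembling these $Y_x$ into a transition matrix via Algorithm~\ref{algo:tree_recomb} defines the Markov chain $X^n$.

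Next I would verify the three hypotheses of Lemma~\ref{lemma: convergence assumptions}, which then yields weak convergence of rate $\min\{\alpha,\tfrac12\}$ for $f\in C^3_P$. Item~\eqref{it:my_local_consistency} is immediate with any $\alpha>0$ (in fact the first two moments are matched \emph{exactly}, since $\mu=0$ removes the discretisation error that forced the inexact match in the non-elliptic case of Theorem~\ref{th:mm_d=1}); concretely one may take $\alpha = \tfrac12$ so the rate is $1/2$. Item~\eqref{it: uniform moments}, the uniform-in-$i$ moment bound $\EE[\max_i |X^n_i|^q] \le c(1+|X_0|^q)$, follows from the general Lemma~\ref{lem:maximum_scheme}, whose hypothesis $n\gamma_n^2 < c$ holds here because $\gamma_n = cn^{-1/2}$ gives $n\gamma_n^2 = c^2$ constant. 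Item~\eqref{it:6power_conditioned} is the sixth-moment bound $\EE[|X^n_{i+1}-X^n_i|^6 \mid X^n_i] \le c(1+|X^n_i|^q)n^{-2-2\alpha}$: here I would use the support bound from Theorem~\ref{th:t_d=d} together with $\sigma\in C^4_b$ (hence linear growth of $\Sigma$, hence of its eigenvalues) to get $|Y_x|^6 \le (\sqrt{\sum_i n^{-1}\lambda_i(x)}\max_i|Qe_i| + \gamma_n)^6 \le c(1+|x|^3)n^{-3}$, which is the required estimate with $\alpha=\tfrac12$; this is the exact analogue of inequality~\eqref{eq:d=1_ineq_assumptions_ch6}.

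For the support-growth bounds I would argue exactly as in the final part of the proof of Theorem~\ref{th:mm_d=1}. If $\mu,\sigma$ are bounded, then $|Y_x|_\infty \le cn^{-1/2} + \gamma_n$ uniformly, so $|X^n_i|_\infty \le i(cn^{-1/2}+\gamma_n)$ and $X^n_i$ lies in a box of side $O(i n^{-1/2})$ intersected with $\gamma_n Q\Z^d$; since $\gamma_n = cn^{-1/2}$ that box contains $O(i^d)$ lattice points, giving $\operatorname{card}[\operatorname{supp}(X^n_i)] \le c i^d$. If instead $\mu,\sigma$ merely have linear growth, the support bound of Theorem~\ref{th:t_d=d} gives $|Y_x|_\infty \le cn^{-1/2}(1+|x|) + \gamma_n$, and the discrete Grönwall inequality (Lemma~\ref{lem:Gronw_dis}) yields $|X^n_i|_\infty \le (cn^{-1/2}i + i\gamma_n)\exp(cin^{-1/2})$, so the enclosing box has side $O(i\exp(cin^{-1/2}))$ and contains $O(i^d\exp(dcin^{-1/2}))$ lattice points. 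The main obstacle in this proof is essentially bookkeeping rather than a genuine difficulty: the substantive work has already been done in Theorem~\ref{th:t_d=d}, and the only place that requires the stronger hypothesis ``$\Sigma(x) = Q\Lambda(x)Q^\top$ with $Q$ fixed'' is ensuring the increments share a common lattice — without simultaneous diagonalisability one would not have a single lattice $\gamma_n Q\Z^d$ supporting all the $Y_x$, which is precisely why the general $d\ge 3$ case with $x$-dependent eigenvectors remains out of reach here.
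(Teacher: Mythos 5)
Your proposal is correct and follows essentially the same route as the paper, whose proof of Theorem~\ref{th:mm_d=d} simply invokes the arguments of Theorems~\ref{th:mm_d=1} and~\ref{th:mm_d=2} with Theorem~\ref{th:t_d=d} supplying the moment-matched increment on the common lattice $\gamma_n Q\Z^d$; you correctly identify the fixed $Q$ as the reason the chain stays on one lattice and verify Lemma~\ref{lemma: convergence assumptions} and the support counts exactly as the paper intends. The only slip is cosmetic: the sixth-moment estimate should read $c(1+|x|^{6})n^{-3}$ rather than $c(1+|x|^{3})n^{-3}$, which is harmless since Lemma~\ref{lemma: convergence assumptions} permits any $q\ge 1$.
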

\begin{proof}
The proof follows the same reasoning used in Theorem~\ref{th:mm_d=1} and~\ref{th:mm_d=2}, 
but using Theorem~\ref{th:t_d=d}. 
The bounds on the cardinality of the support can be obtained as explained in the proof of Theorem~\ref{th:mm_d=2}
\end{proof}
\noindent It is relevant to recall that to obtain $\mu(x)=0$ the Girsanov Theorem can be helpful. 
\section{Variations and Extensions}\label{sec:prev_works}
We briefly discuss variations and possible extensions.

\subsection{Combining State-Space Transformations and Recombination}\label{sec:Transformations}
The classical transformation approach and the recombination approach we present here are not mutually exclusive and can be combined in way that leverages their strengths. 
To demonstrate this, we revisit the problem of approximating an SDE with drift and volatility that are not bounded but have linear growth.
Our Theorem~\ref{th:mm_d=1} guarantees under ellipticity assumptions on $\sigma$, that 
\begin{align}\label{eq:over-linear_growth}
  \begin{cases}
    \operatorname{card}\,[\,\operatorname{support}(X_{i}^{n})\,]\le c\left(i\times {e^{n^{-1/2}i}}\right), & \text{ if \ensuremath{\mu} and \ensuremath{\sigma^{2}} have linear growth,}\\
    \operatorname{card}\,[\,\operatorname{support}(X_{i}^{n})\,]\leq ci, & \text{ if \ensuremath{\mu} and \ensuremath{\sigma^2} are bounded,}
  \end{cases}
\end{align}
which in the case of linear growth leads to exponential growth in time.  
However, inspired by \citet{Nelson1990},we can look for a state-space transformation to first turn the SDE into an SDE with bounded vector fields and subsequently apply Theorem \ref{th:mm_d=1}, but now for the case of bounded vector fields.
Formally, this means that we look for a state-space transformation given by smooth function $f$ such that $Y_t=f(X_t)$, where $X_t$ is the solution of the SDE~\eqref{eq:SDE_ch6sec1}, with bounded $\mu, \sigma$.
Thus, using the Ito's lemma, $Y$ solves
\begin{align}\label{eq:gen_from_Y}
\dif Y_{t}=&\left[\mu(X_{t})\partial f(X_{t})+\frac{\sigma(X_{t})^{2}}{2}\partial^{2}f(X_{t})\right]\dif t+\sigma(X_{t})\partial f(X_{t})\dif W_{t}.
\end{align}
Going back to Equation~\eqref{eq:gen_from_Y} we can see that\footnote{We have to suppose $ \EE_{t}\int_{t}^{t+n^{-1}}\sigma(X_{s})\partial f(X_{s})dW_{s}=0$, which is for example satisfied when the integrand is (locally) square integrable.}
\begin{align}
\EE_{t}[Y_{t+n^{-1}}-Y_{t}]=&\left[\mu(X_{t})\partial f(X_{t})+\frac{\sigma(X_{t})^{2}}{2}\partial^{2}f(X_{t})\right]n^{-1}+O(n^{-2}),\\
\EE_{t}[Y_{t+n^{-1}}-Y_{t}]^{2}=&\left[\sigma(X_{t})\partial f(X_{t})\right]^{2}n^{-1}+O(n^{-2}).
\end{align}
Now, if we suppose to have built the Markov chain approximation $X_i^n$ as in Theorem~\ref{th:mm_d=1}, then we want to see if $Y_i^n=f(X^n_i)$ satisfy Lemma~\ref{lemma: convergence assumptions}-Item~\eqref{it:my_local_consistency}, respect to $Y$. 
Using a Taylor expansion we get %
\begin{align}
\EE_{i}Y_{i+1}^{n}-Y_{i}^{n}=&\EE_{i}\left[f(X_{i+1}^{n})-f(X_{i}^{n})\right]\\=&\EE_{i}\left[\partial f(X_{i}^{n})\Delta X_{i}^{n}+\frac{1}{2}\partial^{2}f(X_{i}^{n})(\Delta X_{i}^{n})^{2}+O(|\Delta X_{i}^{n}|^{3})\right]\\=&\partial f(X_{i}^{n})\EE_{i}\left[\Delta X_{i}^{n}\right]+\frac{1}{2}\partial^{2}f(X_{i}^{n})\EE_{i}\left[\Delta X_{i}^{n}\right]^{2}+O(n^{-3/2}),
\end{align}
where $\Delta X_{i}^{n}=X_{i+1}^{n}-X_{i}^{n}$ and by construction $\EE_{i}O(|\Delta X_{i}^{n}|^{3})\leq c \sum_{j}p_{j}\left|n^{-1/2}z_{j}\right|^{3}\leq c n^{-3/2}$, $z_j\in\Z$. 
Given that $X^n_i$ satisfies Lemma~\ref{lemma: convergence assumptions}-Item~\eqref{it:my_local_consistency} respect to $X$, 
we have that $\EE_{i}[Y_{i+1}^{n}-Y_{i}^{n}]=\EE_{t_i}[Y_{t+n^{-1}}-Y_{t}]+O(n^{-3/2})$. 
For the second moment we proceed similarly
\begin{align}
\EE_{i}\left[Y_{i+1}^{n}-Y_{i}^{n}\right]^{2}=&\EE_{i}\left[f(X_{i+1}^{n})-f(X_{i}^{n})\right]^{2}\\=&\EE_{t}\left[\partial f(X_{i}^{n})\Delta X_{i}^{n}+O(|\Delta X_{i}^{n}|^{2})\right]^{2}\\=&\partial f(X_{i}^{n})^{2}\EE_{t}\left[\Delta X_{i}^{n}\right]^{2}\!+\!2\partial f(X_{i}^{n})\EE_{t}\left[\Delta X_{i}^{n}\!\times\!O(|\Delta X_{i}^{n}|^{2})\right]\!+\!\EE_{t}O(|\Delta X_{i}^{n}|^{4}),
\end{align}
note that $|\EE_{t}[\Delta X_{i}^{n}\times O(|\Delta X_{i}^{n}|^{2})]|\leq c n^{-3/2}$. 
Since 
\[
\EE_{i}\left[\Delta X_{i}^{n}\right]^{2}=\mu(X_{i}^{n})^{2}n^{-2}+\sigma(X_{i}^{n})^{2}n^{-1}+O(n^{-3/2})=\sigma(X_{i}^{n})^{2}n^{-1}+O(n^{-2}),
\]
we can conclude that $\EE_{i}[Y_{i+1}^{n}-Y_{i}^{n}]^{2}=\EE_{t_{i}}[ Y_{t_{i}+n^{-1}}-Y_{t_{i}}]^{2}+O(n^{-3/2})$. 
In the same way we obtain $ \EE_{i}|Y_{i+1}^{n}-Y_{i}^{n}|^{3}=O(n^{-3/2})$, 
therefore we can conclude that $Y^n$ satisfies Lemma~\ref{lemma: convergence assumptions}-Item~\eqref{it:my_local_consistency} respect to $Y$ for $\alpha=1/2$.
Already the example of $Y$ a Geometric Brownian Motion is interesting where $f(x)=e^x$ yields that 
$X$ is a Brownian Motion with a bounded mean and variance, i.e.
\[
  \frac{\dif Y_{t}}{Y_{t}}=\mu \dif t+\sigma \dif W_{t},\quad\quad \dif X_{t}=\left[\mu-\frac{\sigma^{2}}{2}\right]\dif t+\sigma \dif W_{t}.
\]
The state space is not any more a lattice a priori, indeed it is a non linear transformation of a lattice. 
To sum up, this simple variation allows to treat linear growth vector fields by a MC.

\subsection{Optimal Control and Optimal Stopping }\label{sec:Control and Optimal Stopping}
In Section~\ref{sec:Control and Optimal Stopping} we cited optimal stopping as one of the motivations to consider MC approximations.  
However, in general the topology of weak convergence is too coarse to provide robustness of the discretizaton procedures; for example, the solution of the optimal stopping problem for the MC does in general not converge to the solution of the stopping problem for the SDE; see \cite{hoover1984adapted}.  
Nevertheless, we can mimic the arguments in \cite{Kushner1990,Kushner2001} to show that solving the optimal stopping for MC produces by $\mathbf{DISCRETIZE}_n$ yields solutions that are close to the continuous time optimal stopping problem.  
\begin{theorem}(\cite[Chapter 10, Theorem 6.2]{Kushner2001})
Let $X$ be the solution of the SDE~\eqref{eq:SDE_ch6sec1} and $X^n$ be one of the schemes built in Theorems~\ref{th:mm_d=1}, \ref{th:mm_d=2}, \ref{th:mm_d=d}. 
Denote with $\mathcal{T}$ and $\mathcal{T}^n$ the set of almost sure finite stopping times with respect to the natural filtrations of the SDE~\eqref{eq:SDE_ch6sec1} resp.~the filtration generated by $X^n$.
Define the value functions
\begin{align}
V\!(x)\!\!:=\!\min_{\tau\in\mathcal{T}}\EE\!\left[\int_{0}^{\,\tau\wedge1}\!\!\!\!k(X_{s})ds\!+\!g(X_{\tau\wedge1})\Big|X_{0}\!=\!x\right]\text{ and }V^{n}(x)\!\!:=\!\!\!\min_{\tau^{n}\in\mathcal{T}^{n}}\!\EE\!\left[\frac{1}{n}\!\sum_{i=0}^{\tau^{n}\wedge n}\!\!k(X_{i}^{n})\!+\!g(X_{\tau^{n}\wedge n})\Big|X_{0}^{n}\!=\!x\right]\!.
\end{align}
If $\mu, \sigma, g, k$ are bounded Lipschitz continuous\footnote{Other rather general and technical assumptions on $V$ are needed.}, then 
$
V^{n}(x)\to V(x)\text{, as }n\to\infty.
$
\end{theorem}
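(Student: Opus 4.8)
The statement is an instance of the general convergence theory for Markov chain approximations to controlled diffusions in \cite[Chapter 10]{Kushner2001}, so the plan is to verify that the schemes $X^n$ produced by $\mathbf{DISCRETIZE}_n$ fall within the scope of that theory and then invoke it. The only part special to the present paper is the verification; the optimal-stopping argument itself is Kushner's.

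First I would check the \emph{local consistency} hypotheses in the sense of \cite{Kushner1990,Kushner2001}: writing $\Delta X_i^n = X_{i+1}^n - X_i^n$, one needs $\EE[\Delta X_i^n \mid X_i^n = x] = \mu(x)n^{-1} + o(n^{-1})$ and $\operatorname{Cov}(\Delta X_i^n \mid X_i^n = x) = \Sigma(x) n^{-1} + o(n^{-1})$, together with a vanishing step size $\sup_x \EE[|\Delta X_i^n|^2 \mid X_i^n = x] \to 0$. For the schemes of Theorems~\ref{th:mm_d=1}, \ref{th:mm_d=2} and \ref{th:mm_d=d} these are exactly the moment identities \eqref{eq: right moments} and their two- and $d$-dimensional analogues, while the support bounds of Theorems~\ref{th:t_d=1}, \ref{th:t_d=2}, \ref{th:t_d=d} give $|\Delta X_i^n| \le c\,n^{-1/2}$ up to the lattice spacing $\gamma_n \to 0$; the equivalence of this formulation with Kushner's is Lemma~\ref{lem:eqv_loc_cons}. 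The uniform moment bound $\EE[\max_i |X_i^n|^{2q}] \le c(1+|X_0|^{2q})$ from Lemma~\ref{lem:maximum_scheme} supplies the remaining tightness estimate, and boundedness and Lipschitz continuity of $\mu,\sigma,g,k$ are assumed.

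Given local consistency, the standard route is: (i) embed $X^n$ into $D([0,1],\R^d)$ by the piecewise-constant interpolation with time step $n^{-1}$ and deduce tightness from the step and moment bounds; (ii) identify every weak subsequential limit as the solution of the martingale problem for $Lf = \mu\cdot\nabla f + \tfrac12\operatorname{Trace}(\Sigma\nabla^2 f)$, which under the Lipschitz assumption is unique and equals $X$; (iii) upgrade this to joint weak convergence of the process together with an admissible (possibly relaxed) stopping time and pass to the limit in the cost. For the upper bound $\limsup_n V^n(x)\le V(x)$ one takes an $\varepsilon$-optimal stopping time for $X$, which under the regularity assumed on $V$ may be realised as the entrance time into the stopping region $\{V = g\}$, and approximates it by the hitting time $\tau^n$ of a slightly enlarged region by $X^n$; for the lower bound $\liminf_n V^n(x)\ge V(x)$ one takes $\varepsilon$-optimal $\tau^n$ for $X^n$, extracts by tightness a limit pair $(X,\tau)$ with $\tau$ admissible for $X$, and combines lower semicontinuity of the functional with $V(x)\le$ (cost of $\tau$). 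Convergence of the running cost $\tfrac1n\sum_{i\le\tau^n\wedge n} k(X_i^n)\to\int_0^{\tau\wedge 1}k(X_s)\,ds$ and of the terminal cost $g(X^n_{\tau^n\wedge n})\to g(X_{\tau\wedge 1})$ then follows from boundedness and continuity of $k,g$ via a Skorokhod representation.

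The main obstacle is step (iii): weak convergence of paths does not by itself force convergence of optimal-stopping values, since the optimal-stopping functional is not weakly continuous on path space — this is the pathology recorded in \cite{hoover1984adapted}. This is precisely where the additional technical hypotheses on $V$ enter: one needs enough regularity of the value function (equivalently, of the boundary of the stopping region) for the relevant hitting times to be almost-surely continuous functionals of the limit path, so that both the approximation of $\tau$ by $\tau^n$ and the converse extraction are valid. Once this is granted, the argument is verbatim that of \cite[Chapter 10, Theorem 6.2]{Kushner2001}, and the contribution here is only the verification in the preceding paragraphs that the lattice-tree schemes are locally consistent with the prescribed moment and growth rates.
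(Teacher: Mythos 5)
Your proposal is correct and follows essentially the same route as the paper: verify Kushner's local consistency conditions \cite[Equation (1.3), page 71]{Kushner2001} for the schemes of Theorems~\ref{th:mm_d=1}, \ref{th:mm_d=2}, \ref{th:mm_d=d} — the moment conditions via Lemma~\ref{lemma: convergence assumptions}-\eqref{it:my_local_consistency}/Lemma~\ref{lem:eqv_loc_cons}, and the vanishing-increment condition via the support bounds of Theorems~\ref{th:t_d=1}, \ref{th:t_d=2}, \ref{th:t_d=d} — and then invoke \cite[Chapter 10, Theorem 6.2]{Kushner2001}. The paper black-boxes Kushner's argument entirely, whereas you additionally sketch its internals (tightness, martingale-problem identification, the two-sided bound via $\varepsilon$-optimal stopping times), but the substance of the verification is identical.
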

\begin{proof}
Thanks to \cite[Chapter 10, Theorem 6.2]{Kushner2001}, it is enough to check that the conditions in \cite[Equation (1.3), page 71]{Kushner2001} are satisfied for the approximation schemes of Theorems~\ref{th:mm_d=1}, \ref{th:mm_d=2}, \ref{th:mm_d=d}.
\cite[Equation (1.3), page 71]{Kushner2001} has two different types of conditions: 
the first type of conditions is equivalent to \autoref{lemma: convergence assumptions}-\eqref{it:my_local_consistency}; 
the second type of conditions requires that a.s. $\sup_{i} |X^n_{i+1} - X^n_{i}|\to 0 ,$ as $n\to\infty$, but this is a consequence of the fact that the r.v. $Y_{(\cdot)}$ built in the proof of Theorems~\ref{th:mm_d=1}, \ref{th:mm_d=2}, \ref{th:mm_d=d} are bounded by the bounds of Theorems~\ref{th:t_d=1}, \ref{th:t_d=2}, \ref{th:t_d=d}.
\end{proof}
For brevity we do not pursue this further here, but note that the same logic extends to more general control problems, at least in principle: one needs to introduce controls in the coefficients $\mu$ and $\sigma$ and define the related value functions $V, V^n$; see \cite{Kushner1990,Kushner2001}. 
In this case, one would need to construct the random variable  $Y_{(\cdot)}$ for the different controls to allow for effective computation.

\subsection{Extensions.}
\begin{description}
\item[Hypo-elliptic SDEs.] 
  If we do not know a priori if $\Sigma$ is elliptic, we can take a $\gamma>0$ reasonably small and then start building the random variables $Y_{(\cdot)}$ for the lattice $\gamma\Z^d$. {If we arrive in a point $x$ where $\sqrt{\lambda_{\min}[\Sigma(x)]}\leq \gamma$,} then it is enough to add to the lattice $\gamma \Z^d$ some points in the lattice 
  $\mu(x)+\gamma Q\Z^d$, where $Q$ depends as usual from $\Sigma(x)$. 

\item[Switching SDEs]
Switching diffusions can be treated in a straightforward manner. Suppose that the functions $\Sigma$ and $ \mu$ depend on one (or more) independent dynamic process $\theta(t)$, the ``switch'', in a Markovian way. 
For simplicity, let us suppose that $\theta(t)\in\{0,1\}$. We consider four functions $\Sigma_0, \mu_0$ and $\Sigma_1, \mu_1$:
\begin{align}
\EE_{t}[X_{t+n^{-1}}-X_t]\approx\begin{cases}
\mu_{1}(X_{t}) & \text{if }\theta_{t}=1\\
\mu_{0}(X_{t}) & \text{if }\theta_{t}=0
\end{cases},&\quad\quad
\VV_{t}[X_{t+n^{-1}}-X_t]\approx\begin{cases}
\Sigma_{1}(X_{t}) & \text{if }\theta_{t}=1\\
\Sigma_{0}(X_{t}) & \text{if }\theta_{t}=0
\end{cases}. 
\end{align}
Thus, we can approximate two different processes defined by $\Sigma_0, \mu_0$ and $\Sigma_1, \mu_1$ independently, assuming to know or to be able to build a ``good'' discrete time approximation $\theta^n$ of $\theta(t)$. %

\item[Time-inhomogenuous SDEs.]
  We have presented the case of time-homogeneous SDEs, but the convergence results in~\cite{Kloeden1992} apply to time-inhomogeneous SDEs, hence the same approach extends to time-inhomogenuous SDEs.
  Nevertheless, from a computational perspective, time-inhomogeneous SDEs are more challenging since if lattice points are re-visited then the recombination computation has to be redone because time has increased since the last visit.  
  In principle, it is also possible to go beyond SDEs, but the bottleneck there is to find suitable conditions to replace those given in Definition~\ref{def:local_consistency} to guarantee the weak convergence.
  \end{description}

\section{Algorithm and Experiments}\label{sec:Algorithm and Experiments}
First, let us state the pseudo-code of the algorithm for the construction of the Transition Matrix. We suppose for simplicity $X_0=0$, so that  $X^n_0\in\gamma_nQ\Z^d$ for any $n,$ $Q$.
\begin{varalgorithm}{$\mathbf{DISCRETIZE}$}\caption{Lattice-Tree Markov Chain Approximation}\label{algo:tree_recomb}
\textbf{Inputs}: $\mu$, $\sigma$, $x_0$, $n$ %
\begin{algorithmic}[1]
	\State{Choose $\gamma_n$ and $Q$ as prescribed by Theorem~\ref{th:mm_d=1}, \ref{th:mm_d=2}, \ref{th:mm_d=d}}
	\State{queue\_todo.append$[x_0]$}
	\State{{$i\gets 1$}}
	\While{$i\le n$}
	\While{queue\_todo is not empty}
	\State{$x \gets $queue\_todo$[0]$}
	\State{Solve the non-linear moment matching system~\eqref{eq:fundamental_system} to get
    $(p_j,l_j)_j \subset \R^+ \times \gamma_nQ\Z^d$}\label{step:solve_sytem} 
	\State{Transition Matrix $M_n\gets \{ x$ can go to $\{x+l_j\}$ with probabilities $\{p_j\}$ $\}$}
	\State{queue\_todo\_nextstep.append$[$ for any $  j,$ $x+l_j$ such that $ x+l_j\not \in $  queue\_done $\cup$ queue\_todo\_nextstep $]$ }
	\State{queue\_todo.remove[x]}
	\State{queue\_done.append[x]}
	\EndWhile
	\State{queue\_todo$\gets$queue\_todo\_nextstep}
	\State{queue\_todo\_nextstep.remove[all elements]}
	\State{{$i\gets i+1$}}
	\EndWhile
	\State{\textbf{Return} Transition Matrix $M_n$ and starting value $x_n$}
\end{algorithmic}
\end{varalgorithm}
Remember that, if $X^n_0$ is not in the lattice, $X^n_1$ must be built directly to be in the considered lattice, see the proof of Theorem~\ref{th:mm_d=1}. 

\paragraph{Step \ref{step:solve_sytem}: Solving a non-linear Moment System.} 
Step \ref{step:solve_sytem} is the computationally most challenging step of Algorithm~\ref{algo:tree_recomb} since it requires to solve the non-linear lattice-constrained system \eqref{eq:fundamental_system}.   
We now show that previous work on recombination allows to do this step efficiently.
Since our experiments are in dimension $d=2$, we provide the details for the two-dimensional case, but the same approach works for general dimension $d$ under minor modifications. 
Assuming the assumptions of Theorem~\ref{th:t_d=2},~\ref{th:mm_d=2} are met, then one can replace both $\approx$ signs in \eqref{eq:fundamental_system} by $=$ and solve the resulting system by either 
\begin{enumerate}[(i)]
\item\label{itm:solve by theorem} following the proof of Theorem~\ref{th:t_d=2}, or
\item\label{itm:Randomized Recombination} %
  by using the randomized recombination Algorithm 2 in \citet{Cosentino2020}.
\end{enumerate}
The approach \eqref{itm:solve by theorem} has two disadvantages: firstly the number $r$ of weight-point tuples $(w_j,l_j)_{j=1,\ldots,r}$ it returns is in general larger than needed whereas \eqref{itm:Randomized Recombination} returns the smallest number of tuples that is guaranteed by Carath\'eodory's theorem; secondly, and more importantly in practice, is that approach~\eqref{itm:solve by theorem} is slower than the optimized recombination Algorithm 2 used in approach~\eqref{itm:Randomized Recombination}.
\begin{remark}
There are other recombination algorithms \cite{maria2016a,Maalouf2019,Litterer2012,hayakawa2021estimating,hayakawa2021positively,Hayakawa2020MonteCarloCC} besides Algorithm 2 from\cite{Cosentino2020}. %
However, what makes Algorithm 2 especially attractive in the current situation is that it can take as input a sequence of points -- in our case, points sampled uniformly on the lattice -- and the moments that need to be matched.
This is in contrast to all the other mentioned algorithms, which need as input a discrete measure, that is a collection of \emph{points and weights}.  
Although subtle, this is an important difference that adds to the efficiency of the resulting algorithm.
\end{remark}
If the assumptions of Theorem~\ref{th:mm_d=2} are not met, one could still follow the approach ``in spirit'', by trying to construct a discrete probability measure with a small support that approximates the first and second moments.
Formally, this means we 
\begin{enumerate}[(i)]
  \setcounter{enumi}{2}
\item\label{itm:solve by constrained minimization} try to solve the constrained minimization problem 
  \begin{align}\label{eq:minim}
    (w_{j},l_{j})_j:=&\argmin\left|\sum_{j}\left(\begin{array}{c}
                                                   z_{j}\cdot w_{j}\\
                                                   z_{j}^{\otimes2}\cdot w_{j}
                                                 \end{array}\right)-\left(\begin{array}{c}
                                                                            n^{-1}\mu(x)\\
                                                                            n^{-2}\mu(x)^{\otimes2}+n^{-1}\Sigma(x)
                                                                          \end{array}\right)\right|^{2}\\&\quad\quad\text{s.t.}\quad l_{j}\in\gamma\Z^{2},\quad|l_{j}|\leq c\\&\quad\quad\quad\quad\,0\leq w_{j}\leq1,\quad\sum_{j}w_{j}=1, 
  \end{align}
  and then reduce the discrete probability measure obtained by \eqref{eq:minim} further by using any of the recombination algorithms in \cite{maria2016a,Maalouf2019,Litterer2012,hayakawa2021estimating,hayakawa2021positively,Hayakawa2020MonteCarloCC}. 
\end{enumerate}
We re-emphasize that in the case of \eqref{itm:solve by constrained minimization} 
none of the theoretical guarantees of the previous sections apply; in fact, it is not even guaranteed that  the minimum of the optimization problem~\eqref{eq:minim} is small enough.
Nevertheless, highly optimized optimization software exists for such constrained problems and the intuition remains that the resulting MC is a good approximation in view of its construction.  
In fact, our numerical example below suggests that this procedure is quite robust. 

We have implemented the above algorithms and below we apply it two concrete problems: Section \ref{sec:toy model} applies $\mathbf{DISCRETIZE}$ to a two-dimensional SDE and Section \ref{sec:Mean reverting Heston model} studies the robustness of $\mathbf{DISCRETIZE}$ by applying it to a Heston model where the assumptions of Theorem~\ref{th:mm_d=2} are not met and we use case \eqref{itm:solve by constrained minimization} above to solve a constrained minimization problem. 
Python code to replicate the two experiments in the next Subsections is publicly available\footnote{\href{https://github.com/FraCose/Tree-Recombination}{https://github.com/FraCose/Tree-Recombination}}; see also Section~\ref{app:details}. 

\subsection{Example: toy model}\label{sec:toy model}
We consider the following toy model
\begin{align}
\dif\left(\begin{array}{c}
X\\
Y
\end{array}\right)=&\left(\begin{array}{c}
\sin(X)\\
\cos(Y)
\end{array}\right)\dif t+\left(\begin{array}{cc}
\cos(Y)+2 & 0\\
0 & \sin(X)+2
\end{array}\right)\dif W_{t},\\X_{0}=Y_{0}=&0.
\end{align} 

\begin{figure}[hbt!]
    \centering
        \includegraphics[height=4cm,width=0.32\textwidth]{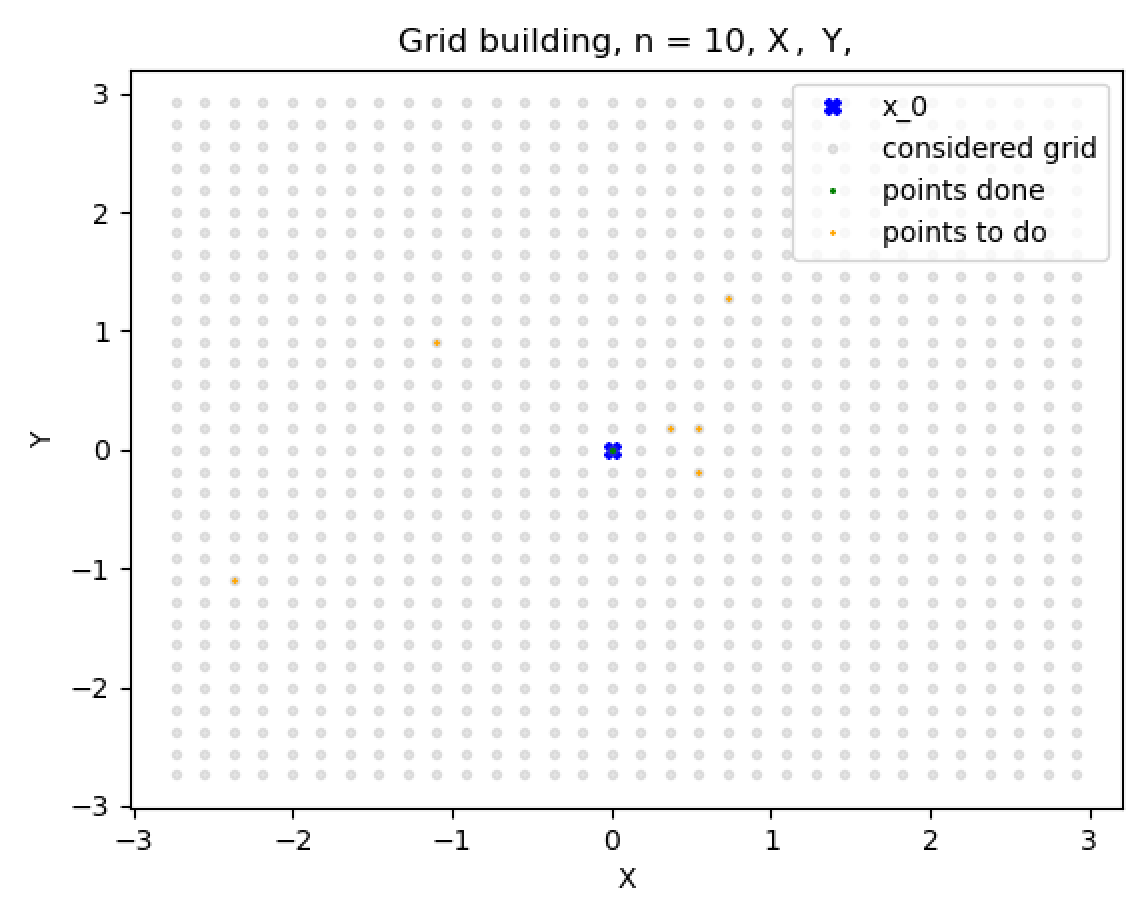}
        \includegraphics[height=4cm,width=0.32\textwidth]{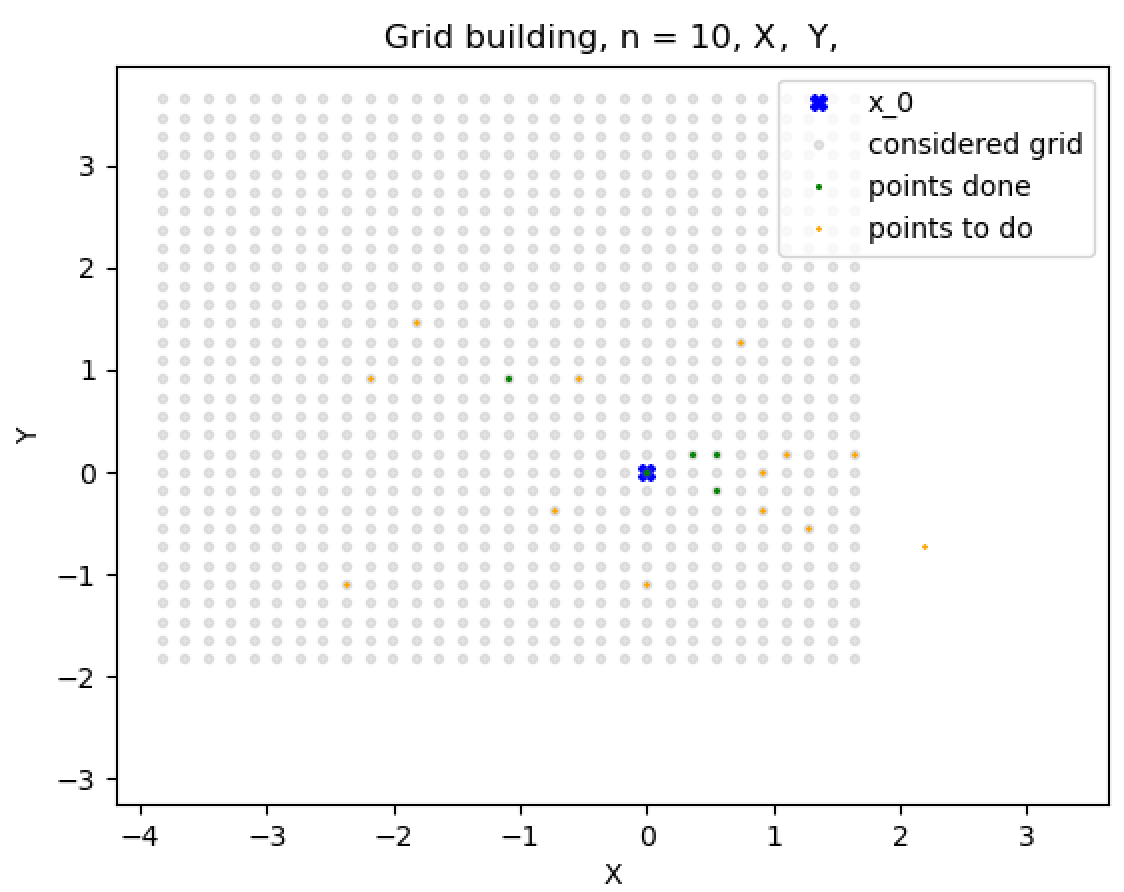}
        \includegraphics[height=4cm,width=0.32\textwidth]{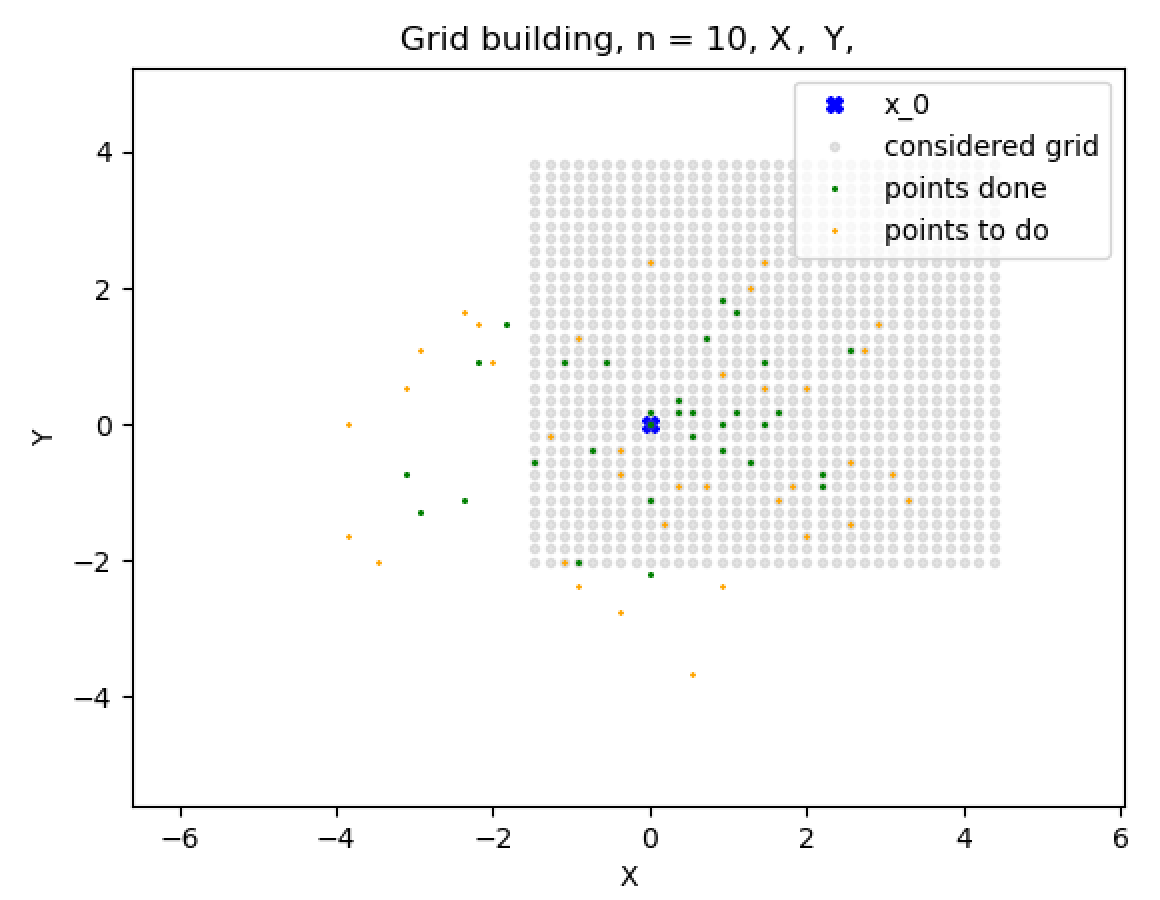}
        
        \includegraphics[height=4cm,width=0.32\textwidth]{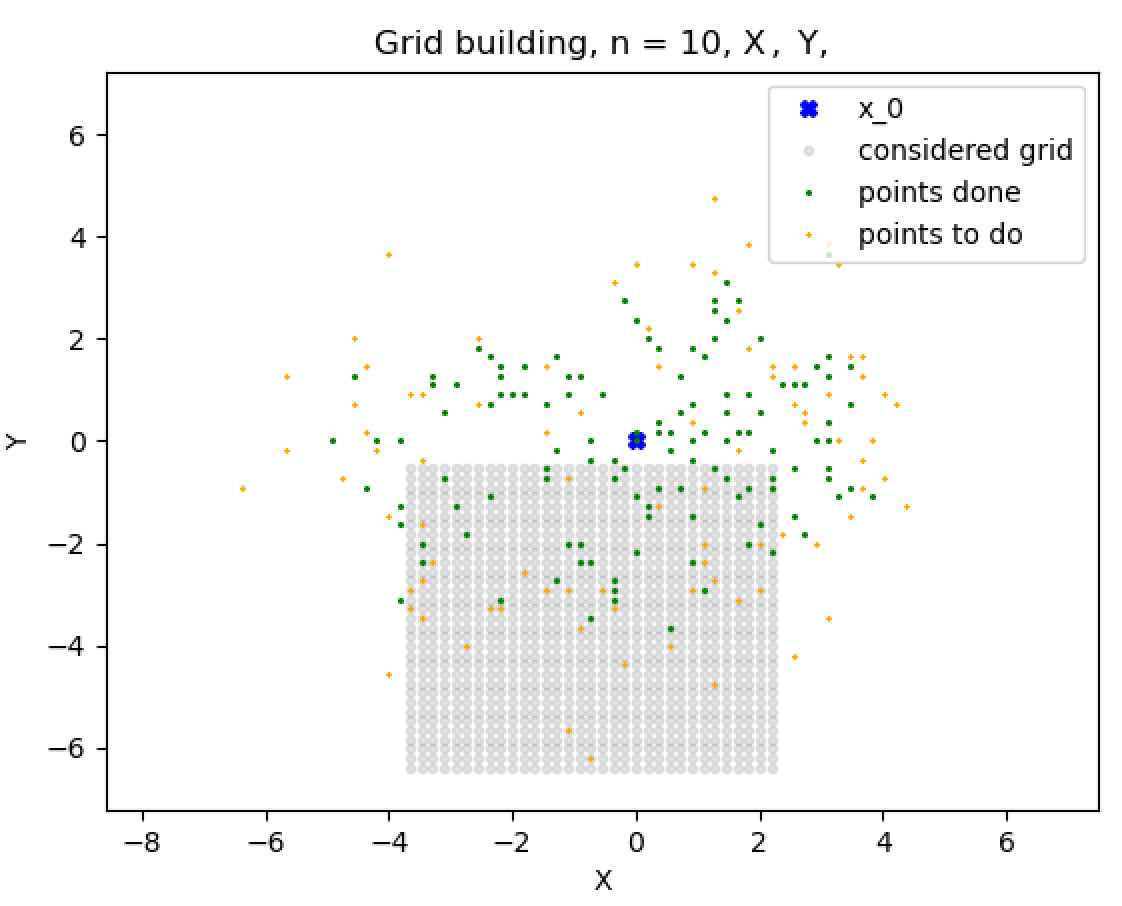}
        \includegraphics[height=4cm,width=0.32\textwidth]{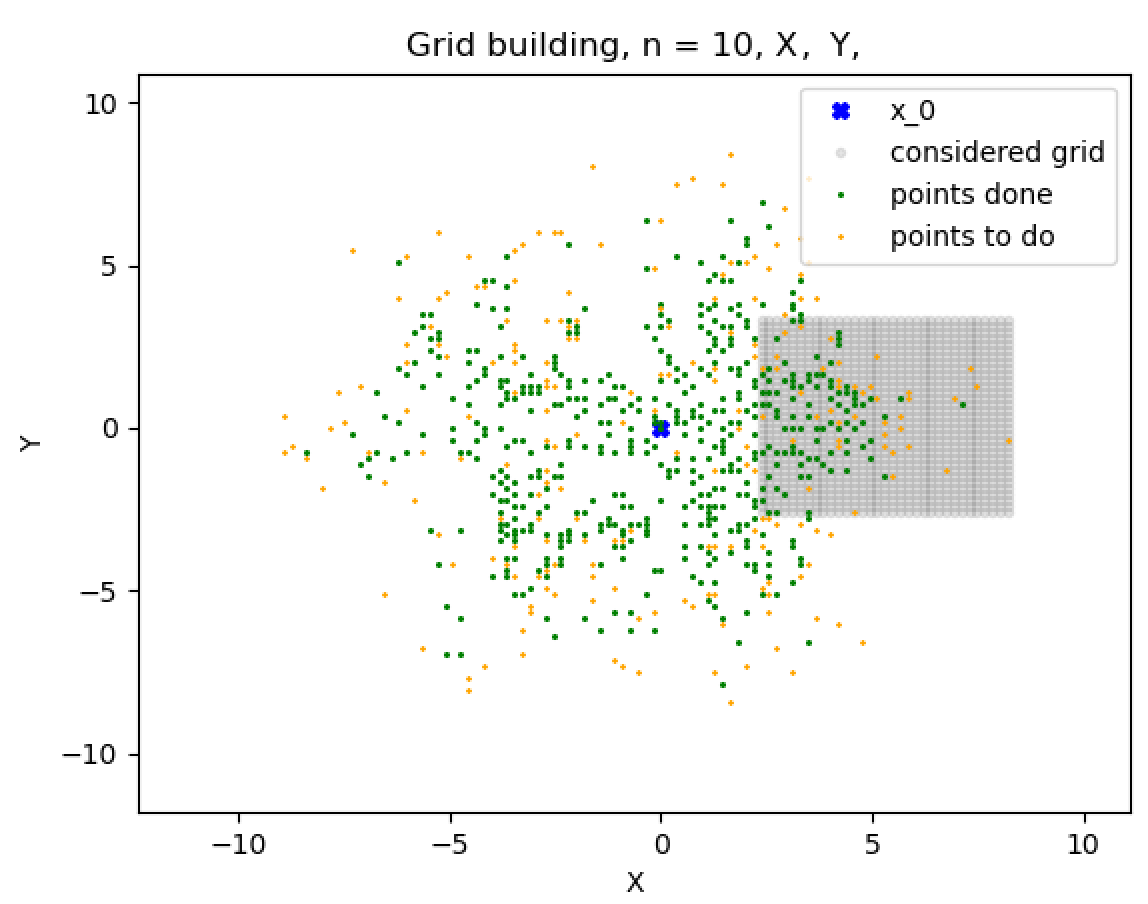}
        \includegraphics[height=4cm,width=0.32\textwidth]{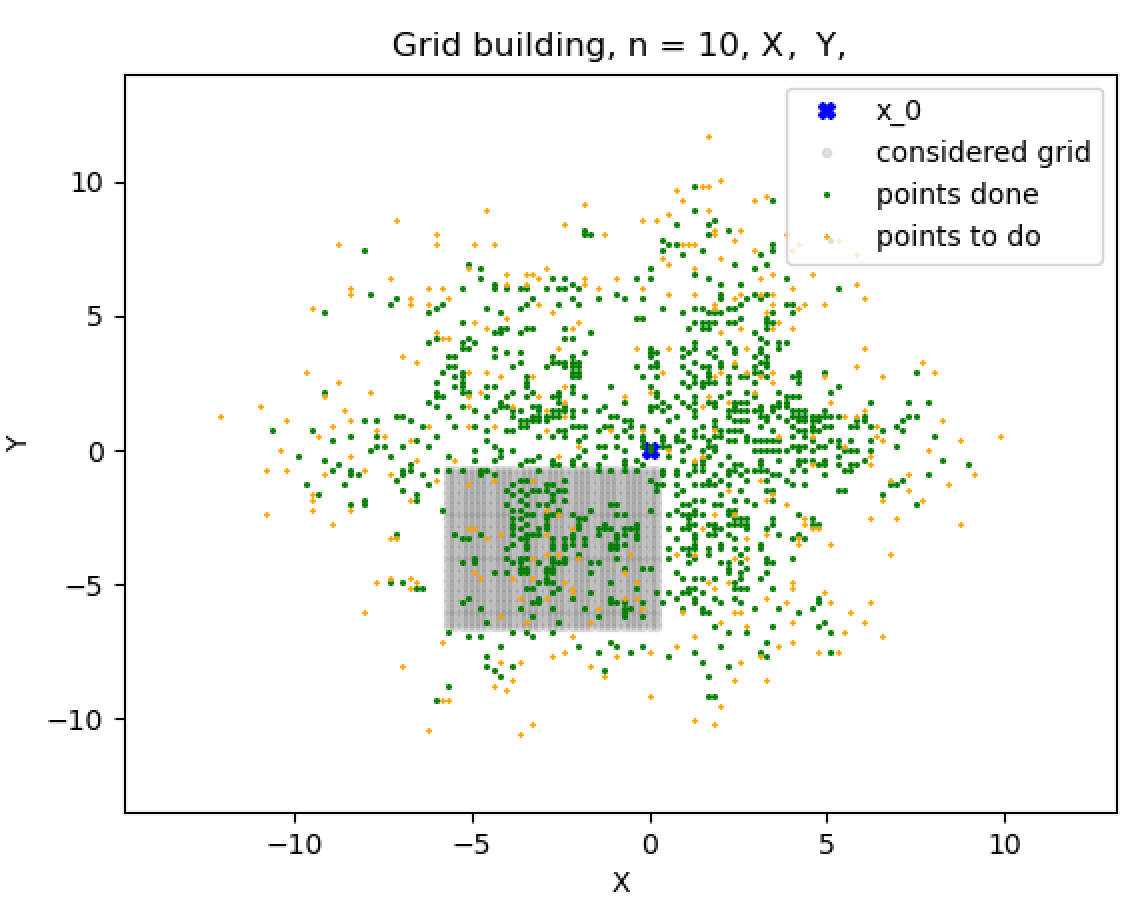}
    \caption[Construction of the transition matrix. ]{
    Construction of the transition matrix. 
    Starting from $x_0$ we consider to build the random variable $X_1$, such that it has support on the grey points and the first two moments are matched exactly. 
    In the top-left plot, $X_1$ has support on the yellow points. 
    Then this procedure is iterated for all the possible points in the support of the measure, following Algorithm~\ref{algo:tree_recomb}. 
    }
        \label{fig:transition_matrix_toy}
\end{figure}

\begin{figure}[hbt!]
    \centering
        \includegraphics[height=4cm,width=0.32\textwidth]{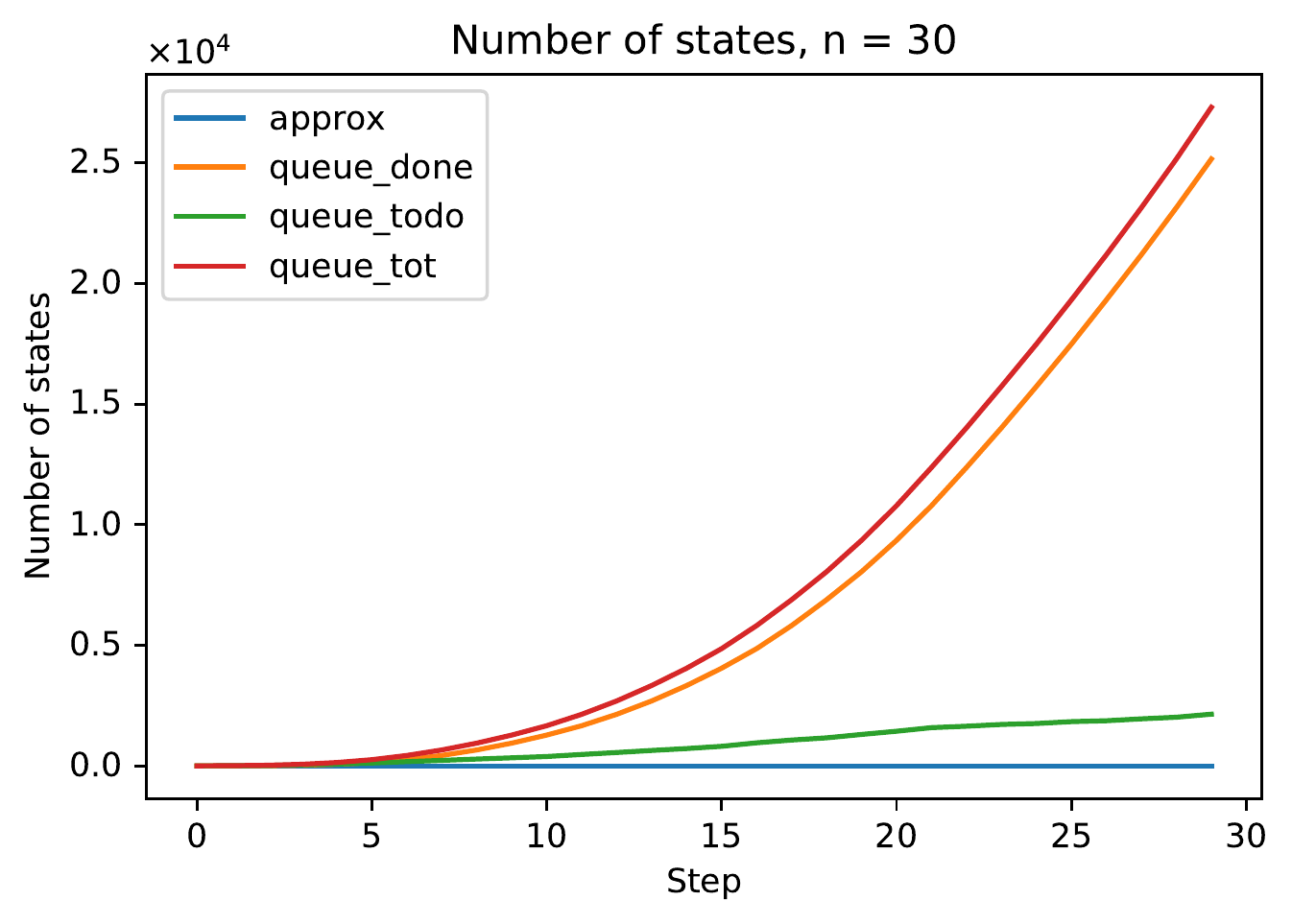}
        \includegraphics[height=4cm,width=0.32\textwidth]{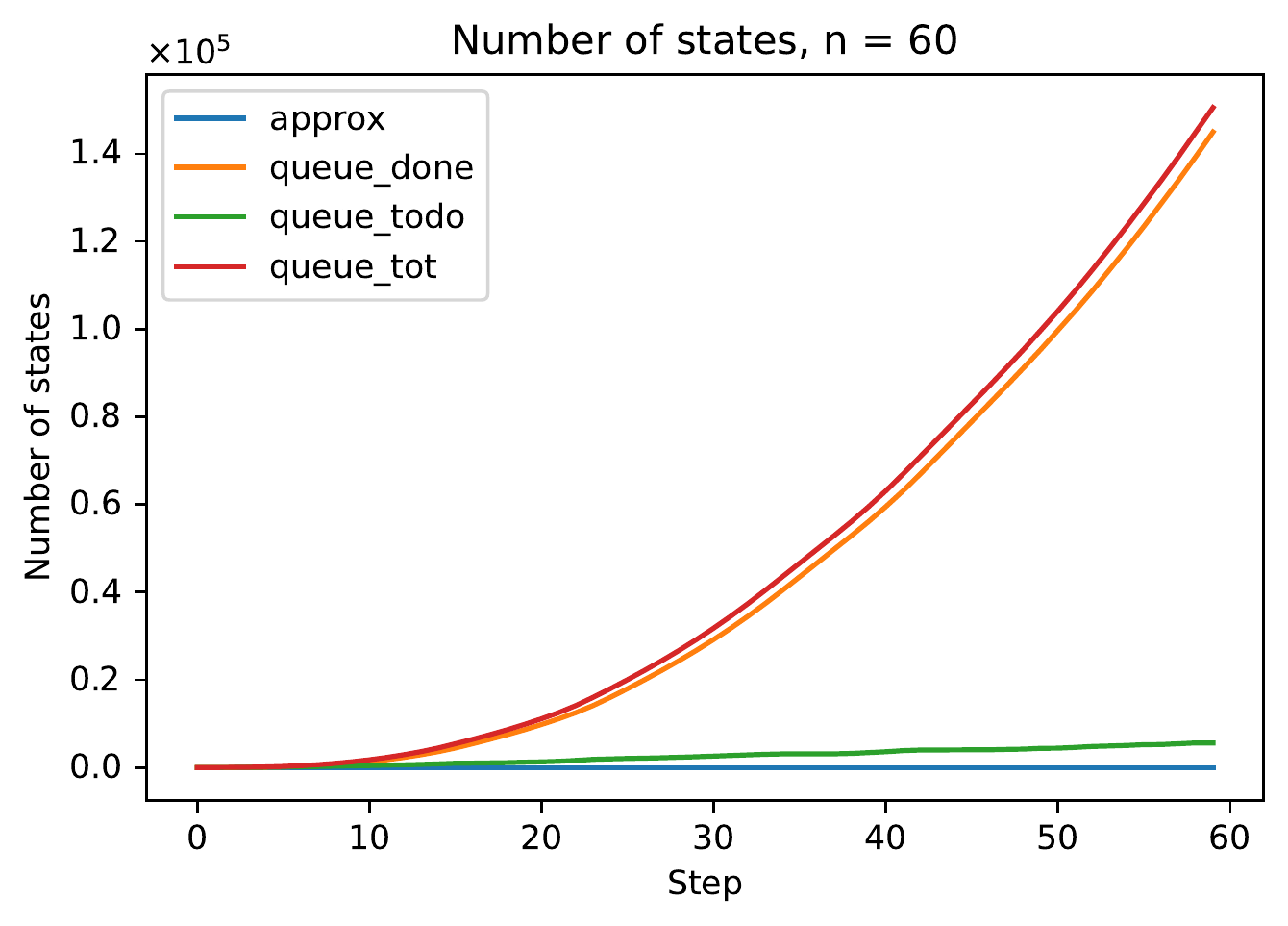}
        \includegraphics[height=4cm,width=0.32\textwidth]{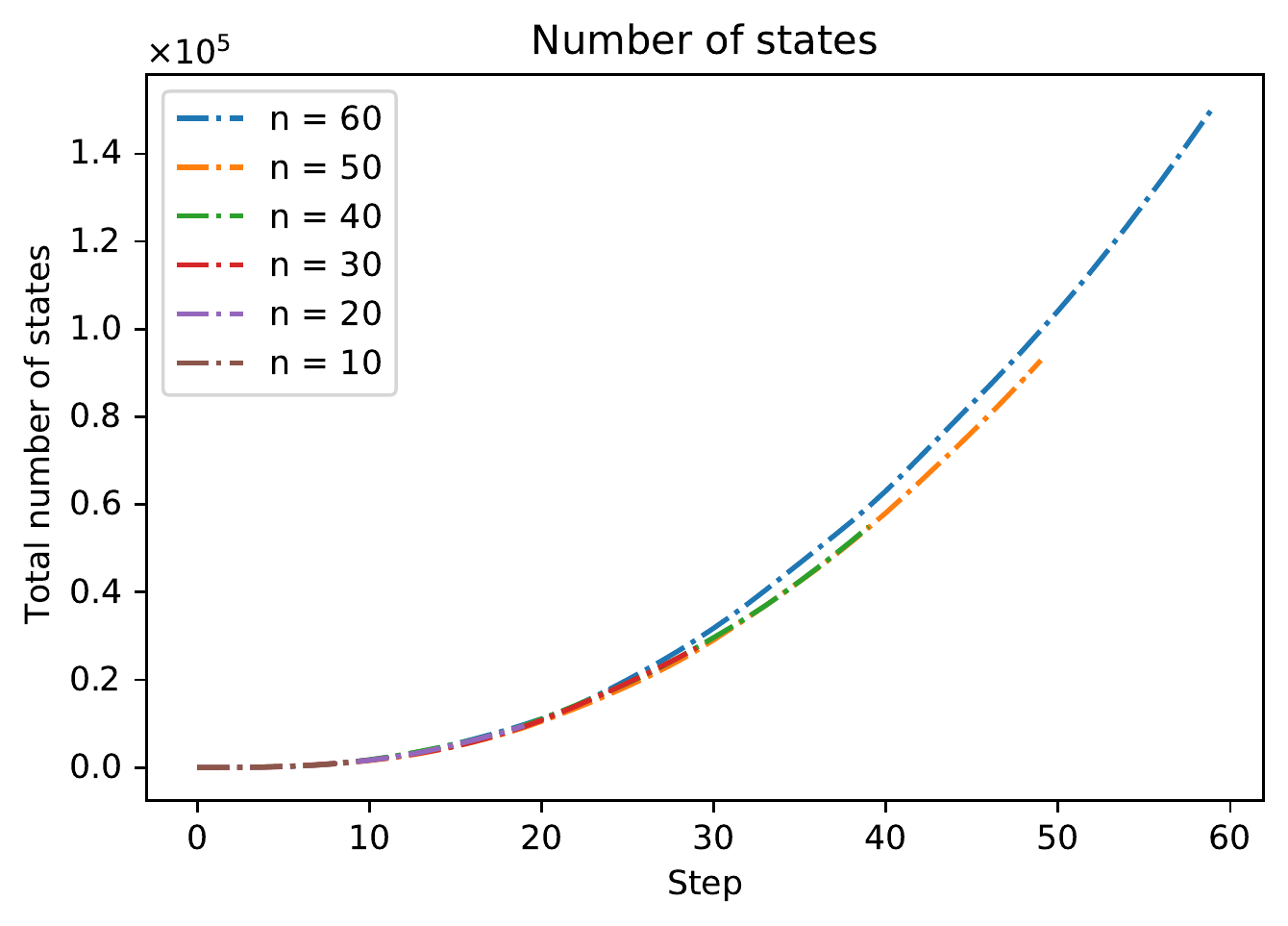}
    \caption[Growth of the number of states. ]{
    The left and middle plots show the growth of the cardinalities of the queues created by Algorithm~\ref{algo:tree_recomb} for fixed $n$ as $i$ increases. 
    The right plot shows a comparison of the growth of the number of states for different $n$ as $i$ increases. 
	}
    \label{fig:number_of_states_and_comparison_number_of_states_toy}
\end{figure}

\begin{figure}[hbt!]
    \centering
        \includegraphics[height=4cm,width=0.32\textwidth]{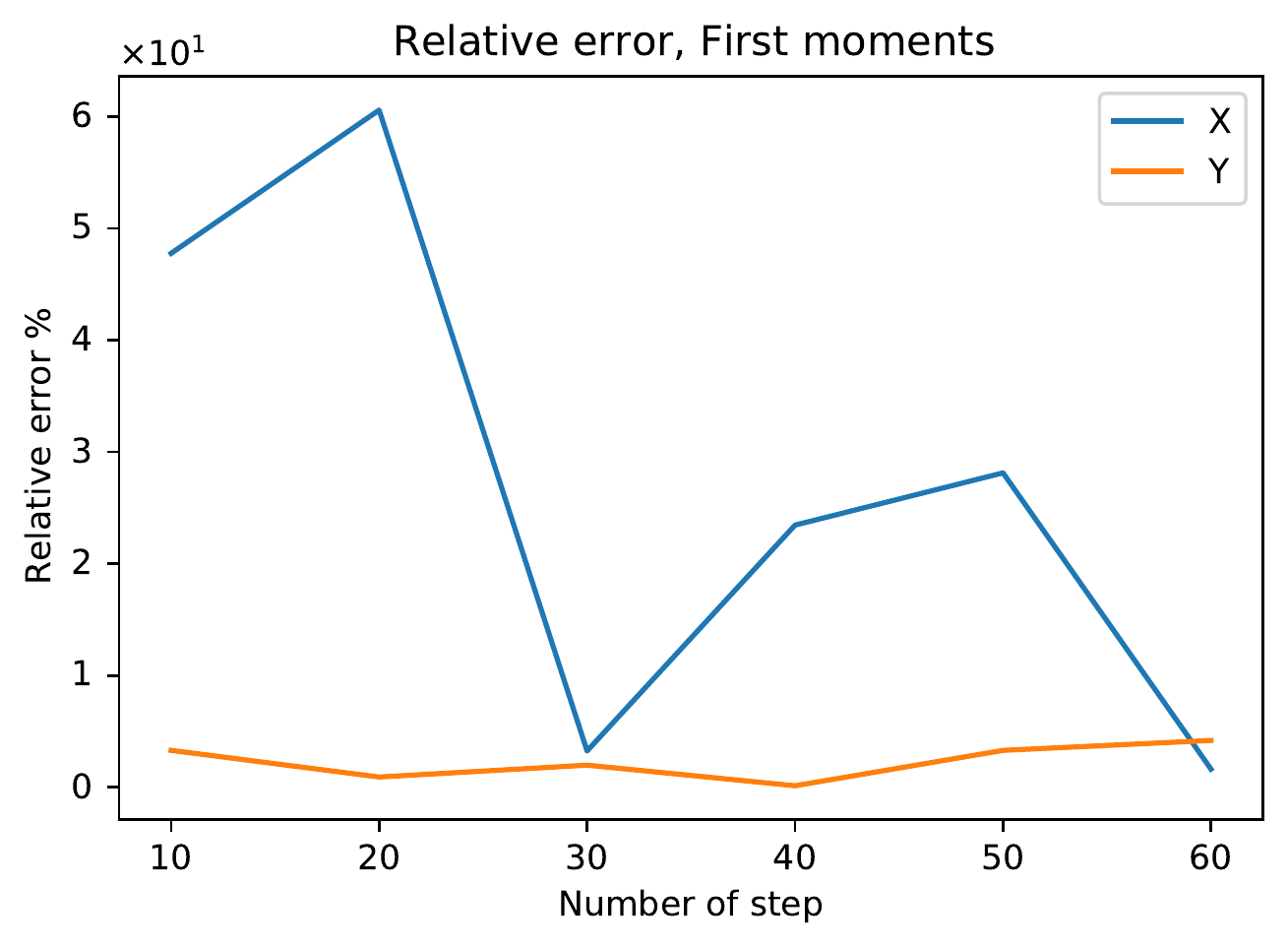}
        \includegraphics[height=4cm,width=0.32\textwidth]{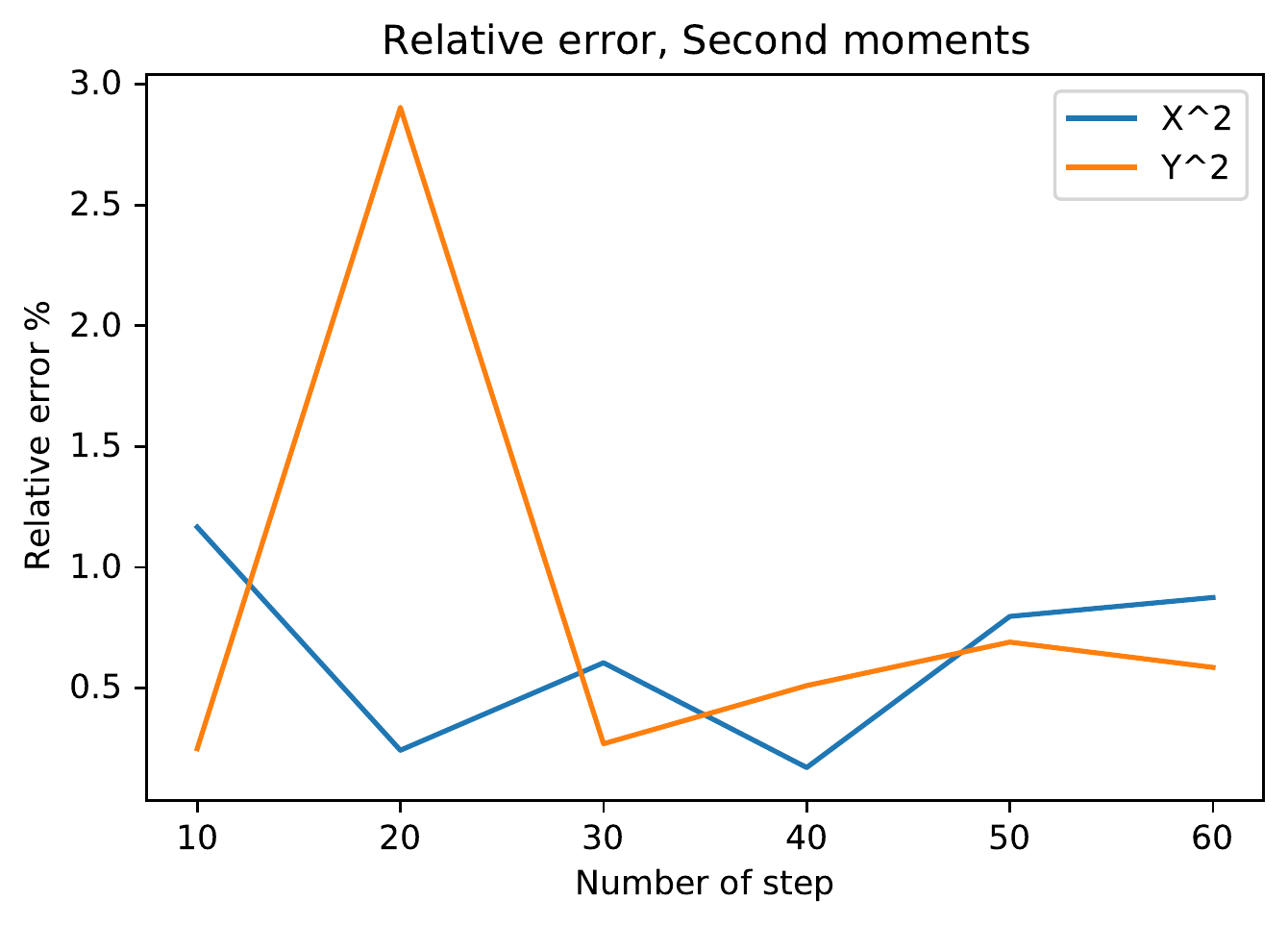}
        
        \includegraphics[height=4cm,width=0.32\textwidth]{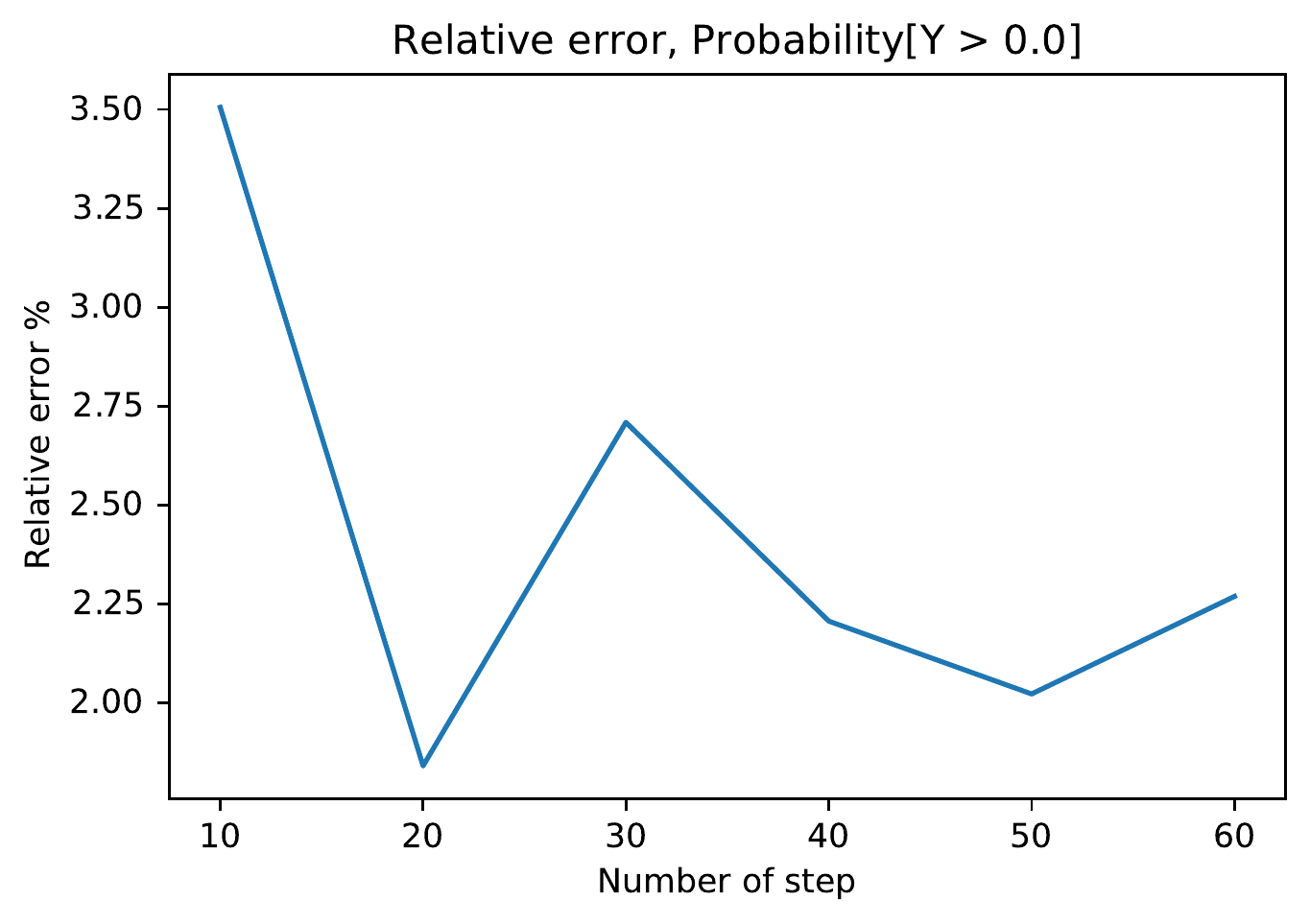}
        \includegraphics[height=4cm,width=0.32\textwidth]{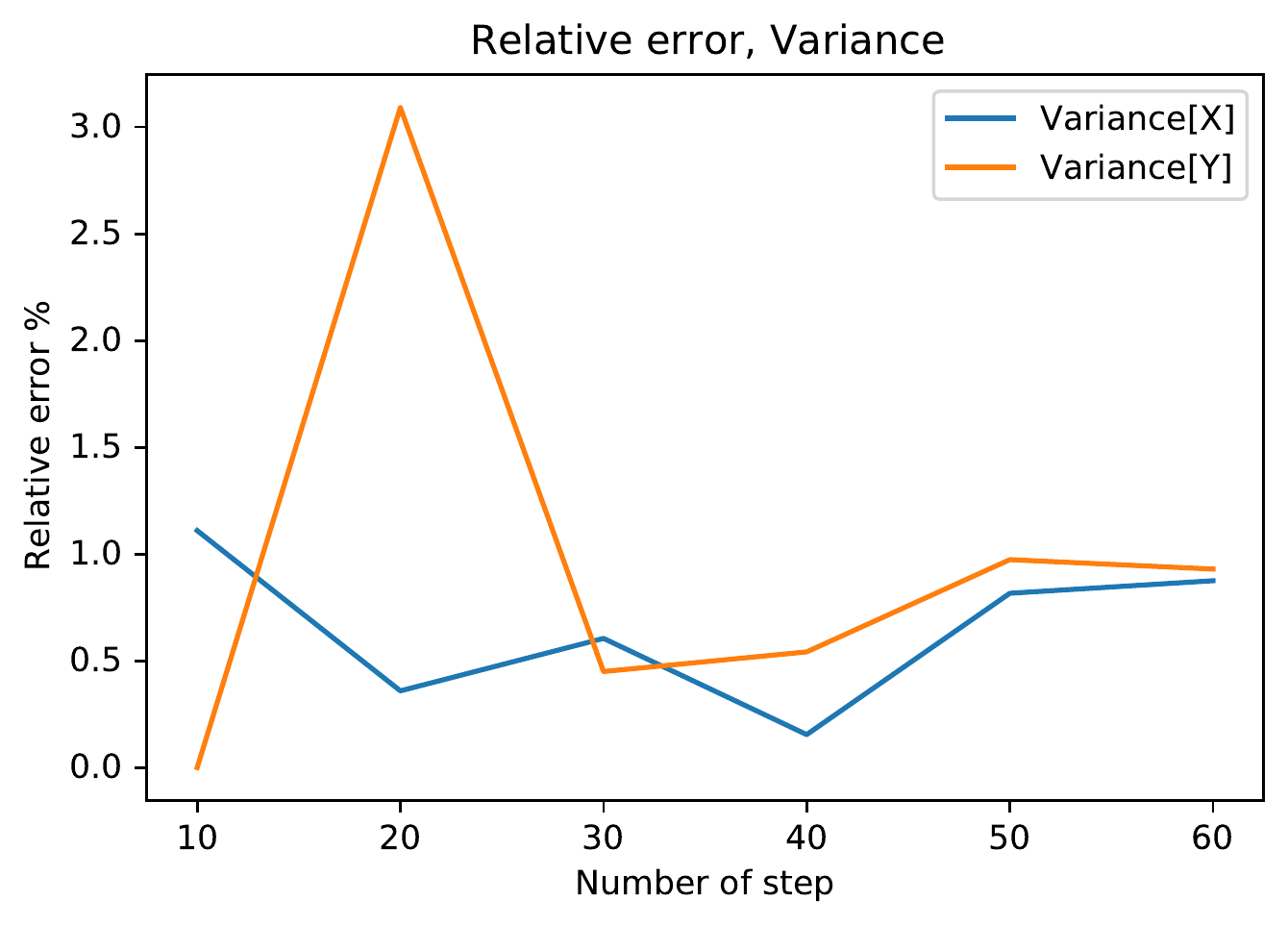}
    \caption[Relative errors. ]{Relative errors for some functionals of $X_1, Y_1$. 
    We compute the first  and the second moments, the variances and the probability that $Y_1$ is greater than 0, respectively $\EE[X_1], \EE[Y_1], \EE[X_1^2], \EE[Y_1^2],\VV[X_1], \VV[Y_1]$ and $\Prob[Y_1>0]$. 
    As ground truth we consider an Euler scheme Monte Carlo simulation of the mode with $n=1000$ steps.  
    }
    \label{fig:rel_errors_toy}
\end{figure}

\begin{figure}[h!]
    \centering
        \includegraphics[height=4cm,width=0.32\textwidth]{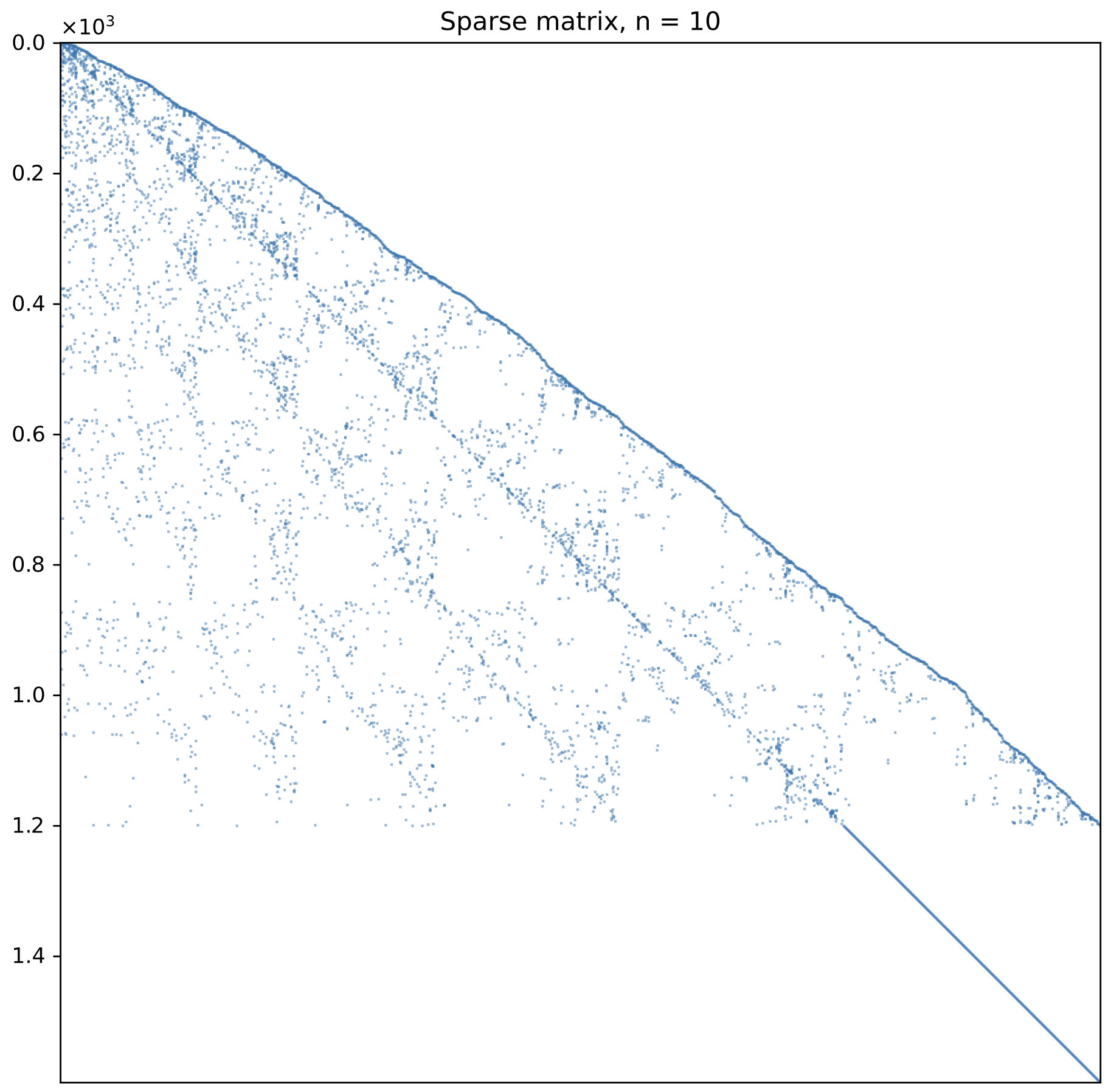}
        \includegraphics[height=4cm,width=0.32\textwidth]{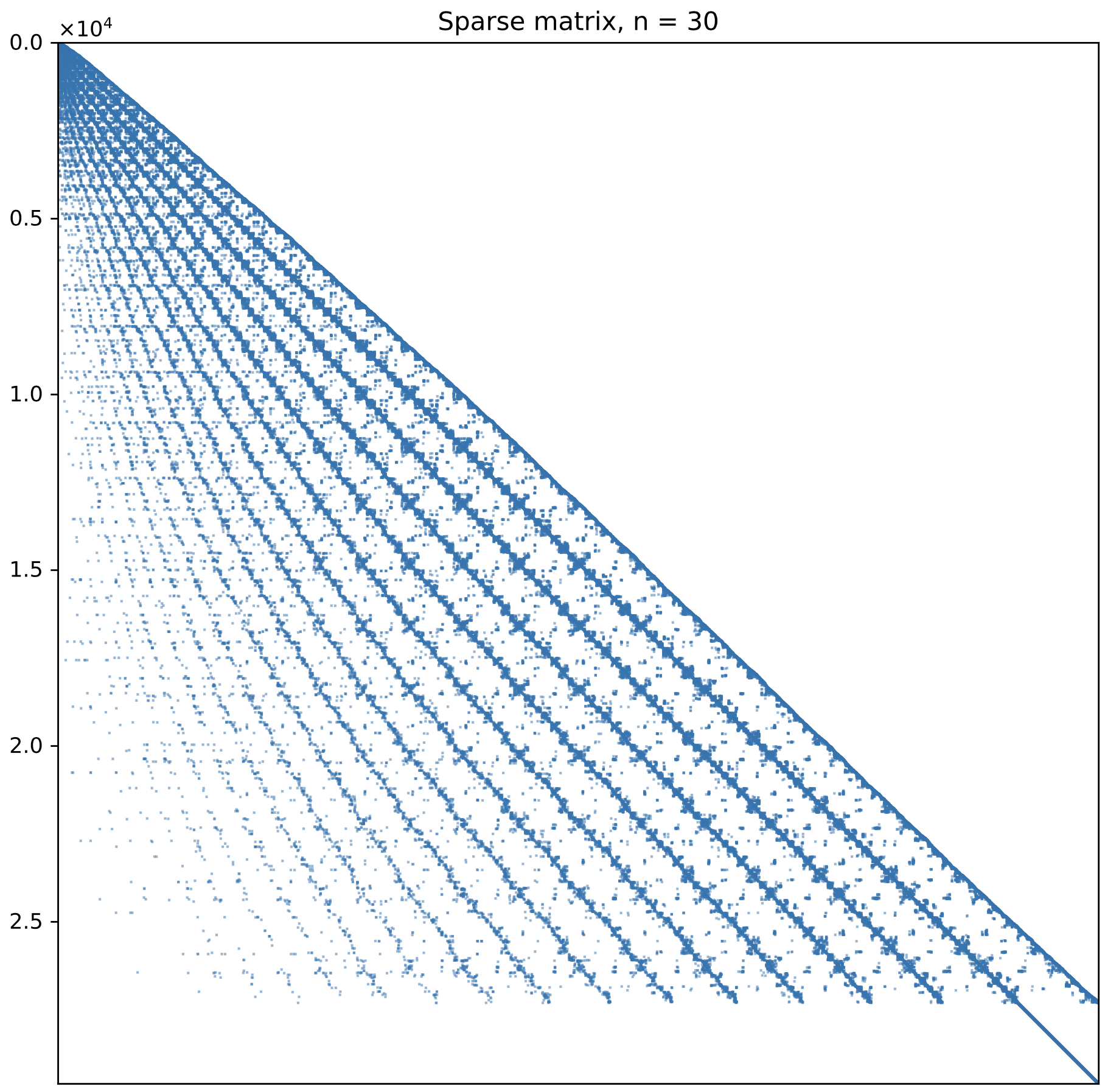}
        \includegraphics[height=4cm,width=0.32\textwidth]{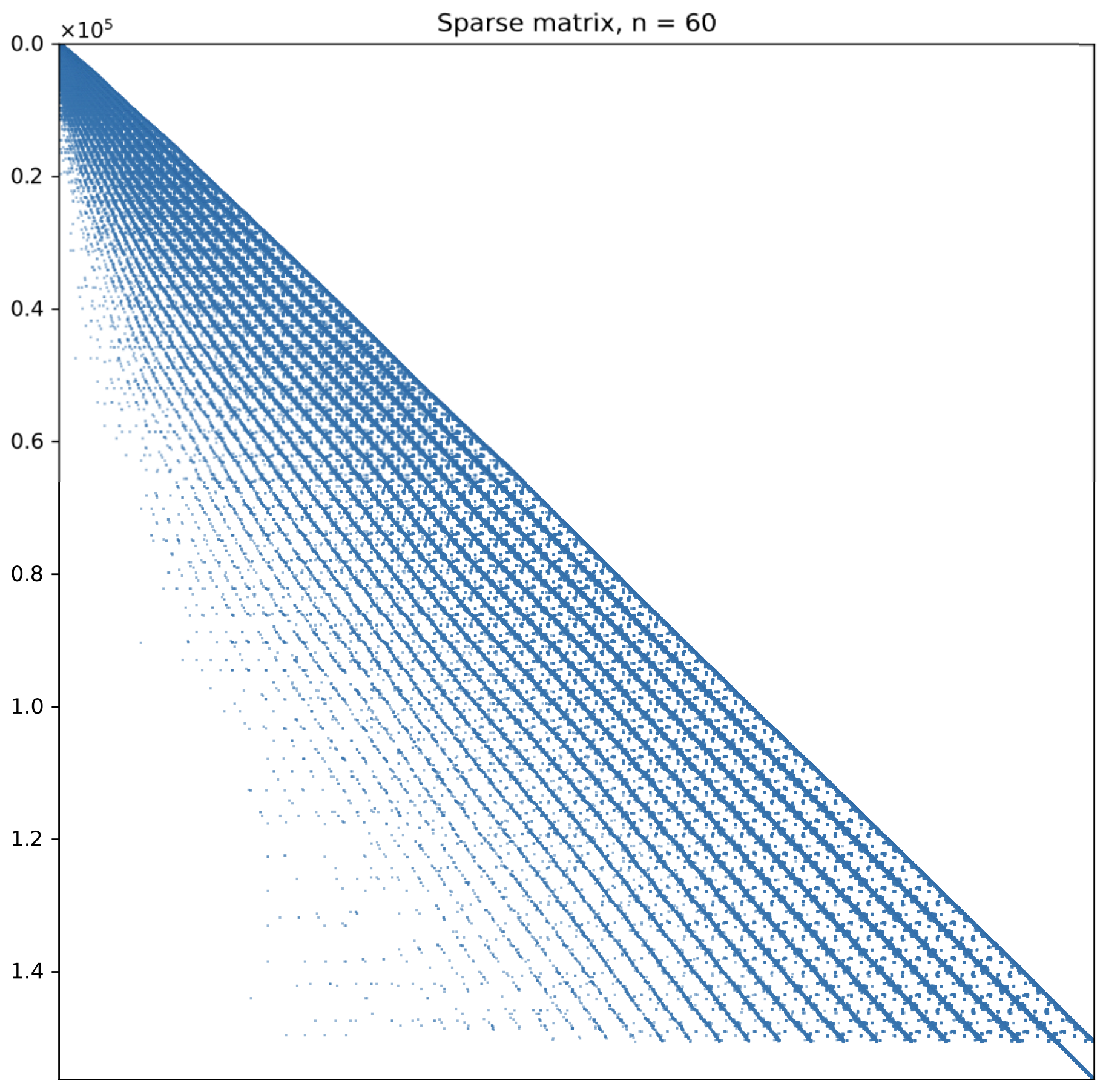}
    \caption[Sparse Matrix.]{Representation as sparse matrix of the Probability Transition Matrix for different values of $n$.}
    \label{fig:sparsity of the matrix_toy}
\end{figure}

It is easy to check that this model satisfies the assumptions of Theorem~\ref{th:mm_d=2}, indeed the vector fields are bounded and the SDE is strictly elliptic: 
$\epsilon=\inf_{x}\lambda_{\min}(x)=\inf_{x}\text{eigenvalues}(\sigma\sigma^{\top}(x))=1$, 
thus we can consider $\gamma_{n}=n^{-1/2}\sqrt{1/3}$.

\paragraph{Results.} 
Figure~\ref{fig:transition_matrix_toy} visualizes the iterative nature of the MC construction: starting from the initial point $x_0$ the algorithm selects at every time step a few possible attainable states on the two-dimensional lattice, such that the local consistency condition is satisfied. 
As time progresses, new states get added and many previous states are revisited (leading to a recombining tree).
Figure~\ref{fig:number_of_states_and_comparison_number_of_states_toy} shows the growth of the state space as a function of time for different values of $n$,  and Figure~\ref{fig:rel_errors_toy} shows the relative error,
Finally, Figure~\ref{fig:sparsity of the matrix_toy} shows the resulting sparse transition matrices.
Unsurprisingly, the transition matrices are sparse, as seen in Figure~\ref{fig:sparsity of the matrix_toy}, since this is optimized by construction, see also Appendix~\ref{app:details}.

\subsection{Example: Mean reverting Heston model}\label{sec:Mean reverting Heston model}
For the second experiment, we consider the Heston model (HM) \cite{Heston1993}, a popular stochastic volatility model in mathematical finance
\begin{align}\label{eq:HM}
\dif P_{t}=&\mu P_{t}\dif t+\sqrt{V_{t}}P_{t}\dif W_{t}^{P}, \\
\dif V_{t}=&\lambda(\theta-V_{t})\dif t+\xi\sqrt{V_{t}}\dif W_{t}^{V},
\end{align}
where $P_t$ is the price of an asset and $V_t$ its instantaneous
variance; 
$W_{t}^{P}, W_{t}^{V}$ are two standard Brownian Motions with correlation parameter $\rho$ and $\lambda, \theta, \mu, \xi$ are all constants. We recall that if $2\lambda\theta>\xi^2$, then $V_t$ is a.s. positive, see for example \cite{Cox1985}.
Tree approximations that are tailor-made to the HM model in \eqref{eq:HM}, e.g.~\citet{Zeng2019,Akyildirim2014}, 
have been developed but, to the best of our knowledge, these do not apply when $V$ depends on $P$. 
The latter is a desirable model property since empirical observation suggest the price and the volatility to be inversely related, and that $V$ has a mean reverting behaviour; for theoretical properties of such an SDE, see \citet{Romano1997}. 
A simple example of such price-variance relation that we use for our numerical experiment is  
\[
 \theta_t \coloneqq \theta(P_t) \coloneqq c\frac{1}{P_t+1}+k.
\]
Indeed, one can check that in this case $V$ is mean-reverting\footnote{By using
  the differential form of $d (e^{t\lambda}V_{t})$ and Ito's rule. We have to suppose the existence of $\mathbb{E}\theta_{\infty}$.}, 
$\lim_{t\to\infty}\mathbb{E}V_{t}=\mathbb{E}\theta_{\infty}$. 
To sum up, we consider the system
\begin{align}\label{eq:HM_mod}
\dif\left(\!\begin{array}{c}
P_{t}\\
V_{t}
\end{array}\!\right)\!=\!\left(\!\begin{array}{c}
\mu P_{t}\\
\lambda\left(\frac{c}{1+P_{t}}+k-V_{t}\right)
\end{array}\!\right)\! \dif t\!+\!\sqrt{V_{t}}\left(\!\begin{array}{cc}
P_{t} & 0\\
0 & \xi
\end{array}\!\right)\!\left(\!\begin{array}{cc}
1 & 0\\
\rho & \sqrt{1-\rho^{2}}
\end{array}\!\right)\!\dif\!\left(\!\begin{array}{c}
W_{t}^{1}\\
W_{t}^{2}
\end{array}\!\right),
\end{align}
where $W_{t}^{1}, W_{t}^{2}$ are two independent Brownian Motions.
Moreover, in a finance context it is more natural model $\log P_t$ instead of $P_t$, and we reformulate the above system as 
\begin{align}
  \dif\left(\begin{array}{c}
           \ln P_{t}\\
           V_{t}
         \end{array}\right)=
         &\left(\begin{array}{c}
                  \mu-\frac{1}{2}V_{t}\\
                  \lambda\left(\frac{c}{1+P_{t}}+k-V_{t}\right)
                \end{array}\right)\dif t+\sqrt{V_{t}}\left(\begin{array}{cc}
                                                       1 & 0\\
                                                       \xi\rho & \xi\sqrt{1-\rho^{2}}
                                                     \end{array}\right)\dif\left(\begin{array}{c}
                                                                                W_{t}^{(1)}\\
                                                                                W_{t}^{(2)}
                                                                              \end{array}\right).
\end{align}

\begin{figure}[hbt!]
    \centering
        \includegraphics[height=4cm,width=0.32\textwidth]{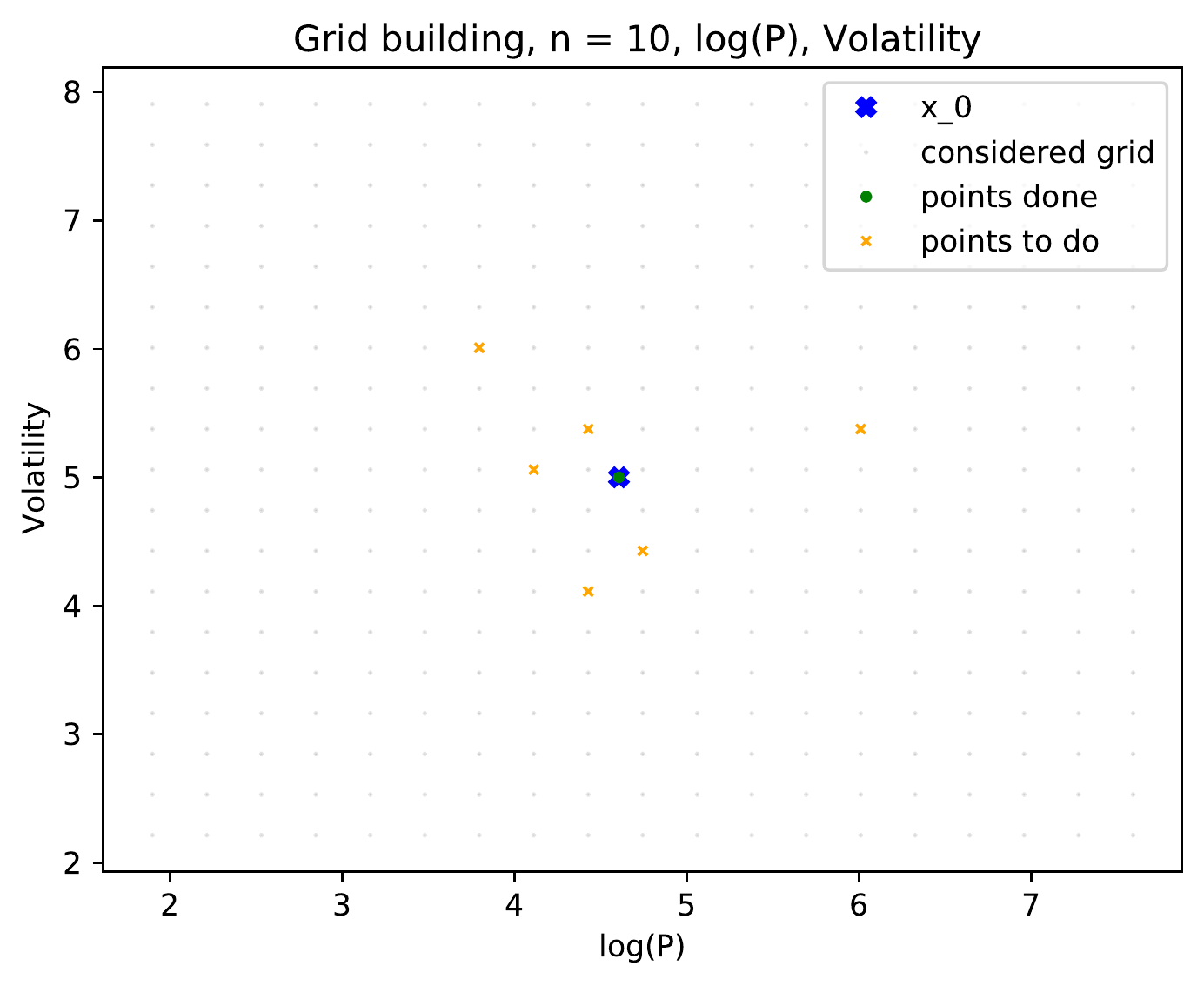}
        \includegraphics[height=4cm,width=0.32\textwidth]{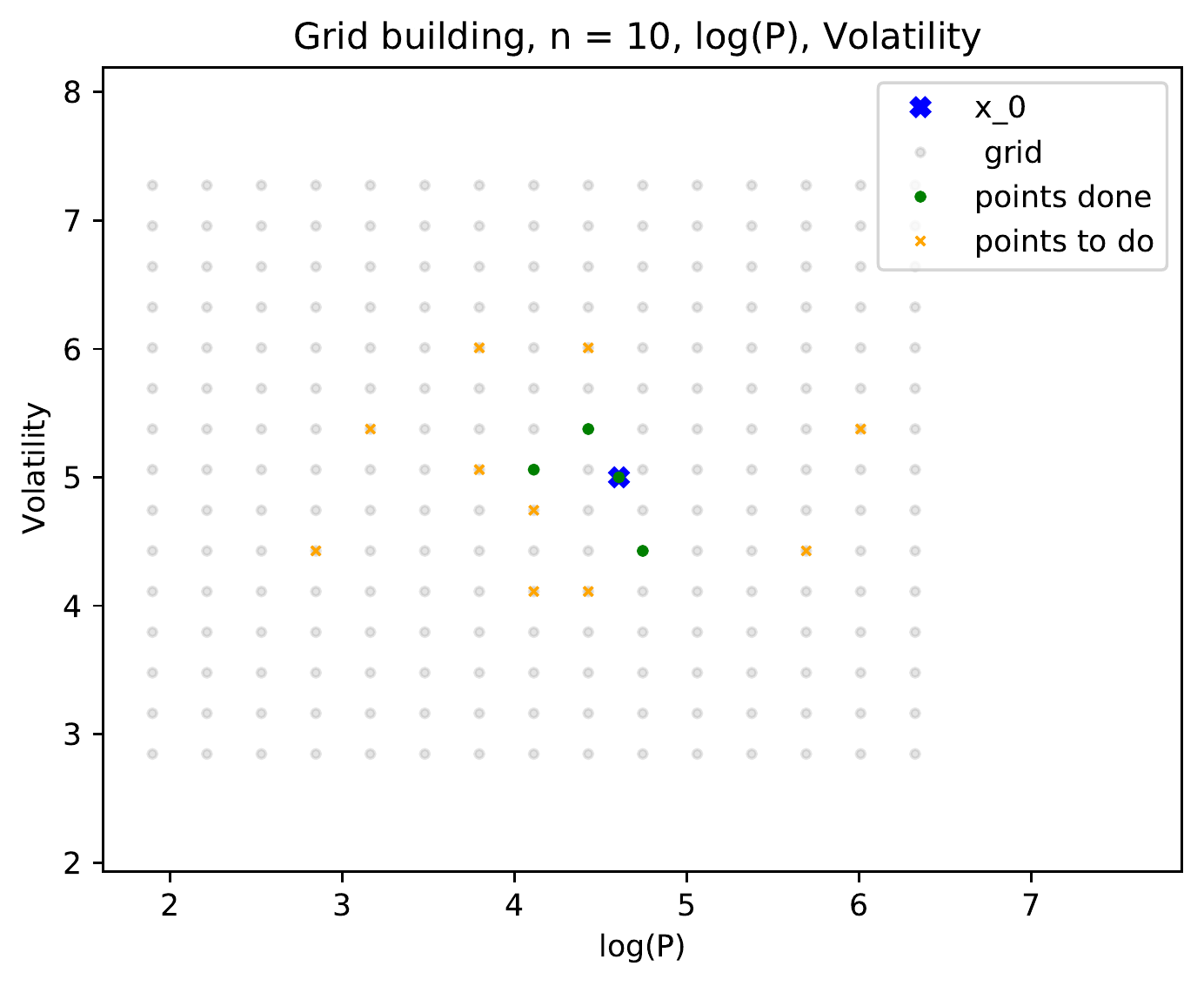}
        \includegraphics[height=4cm,width=0.32\textwidth]{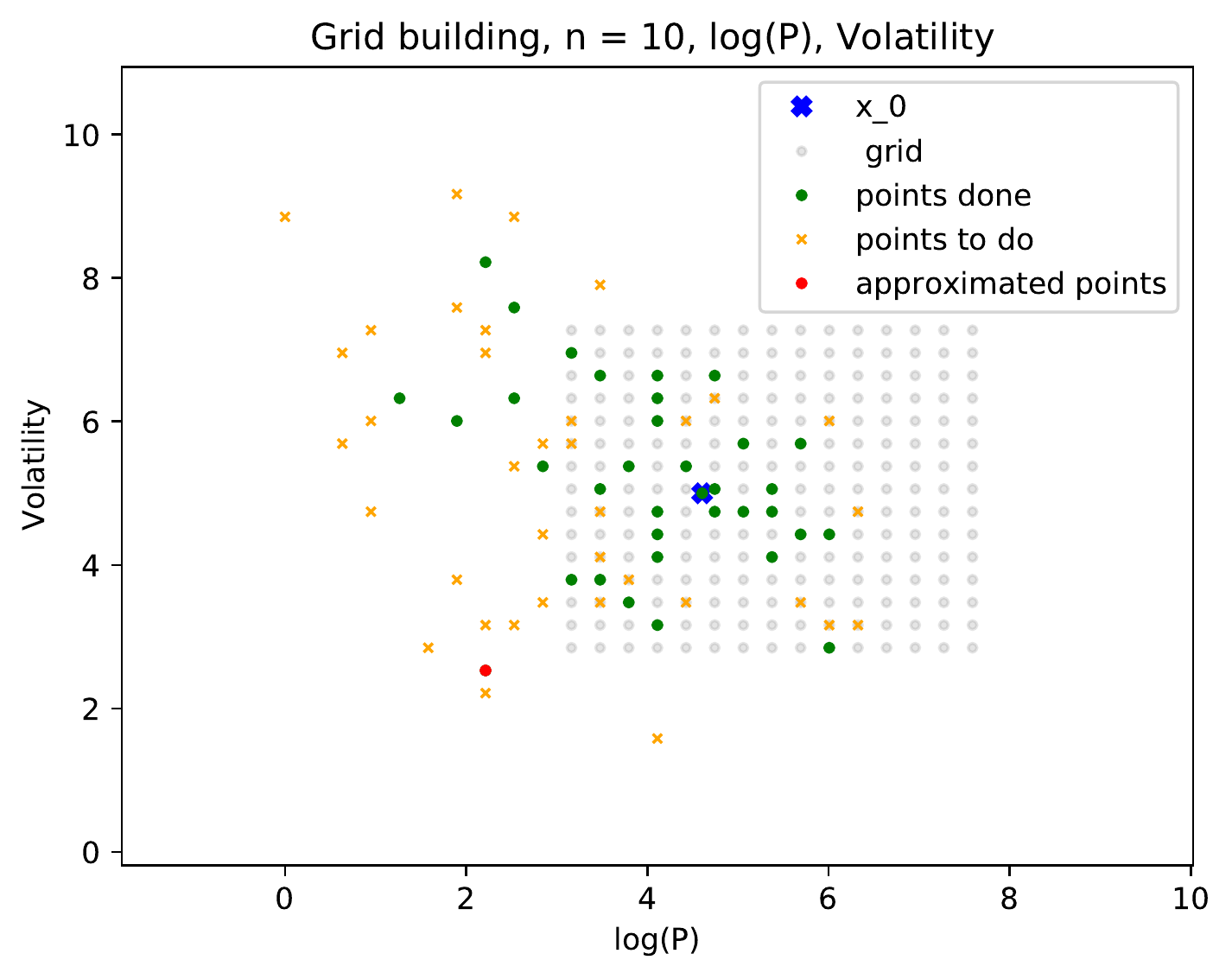}
        
        \includegraphics[height=4cm,width=0.32\textwidth]{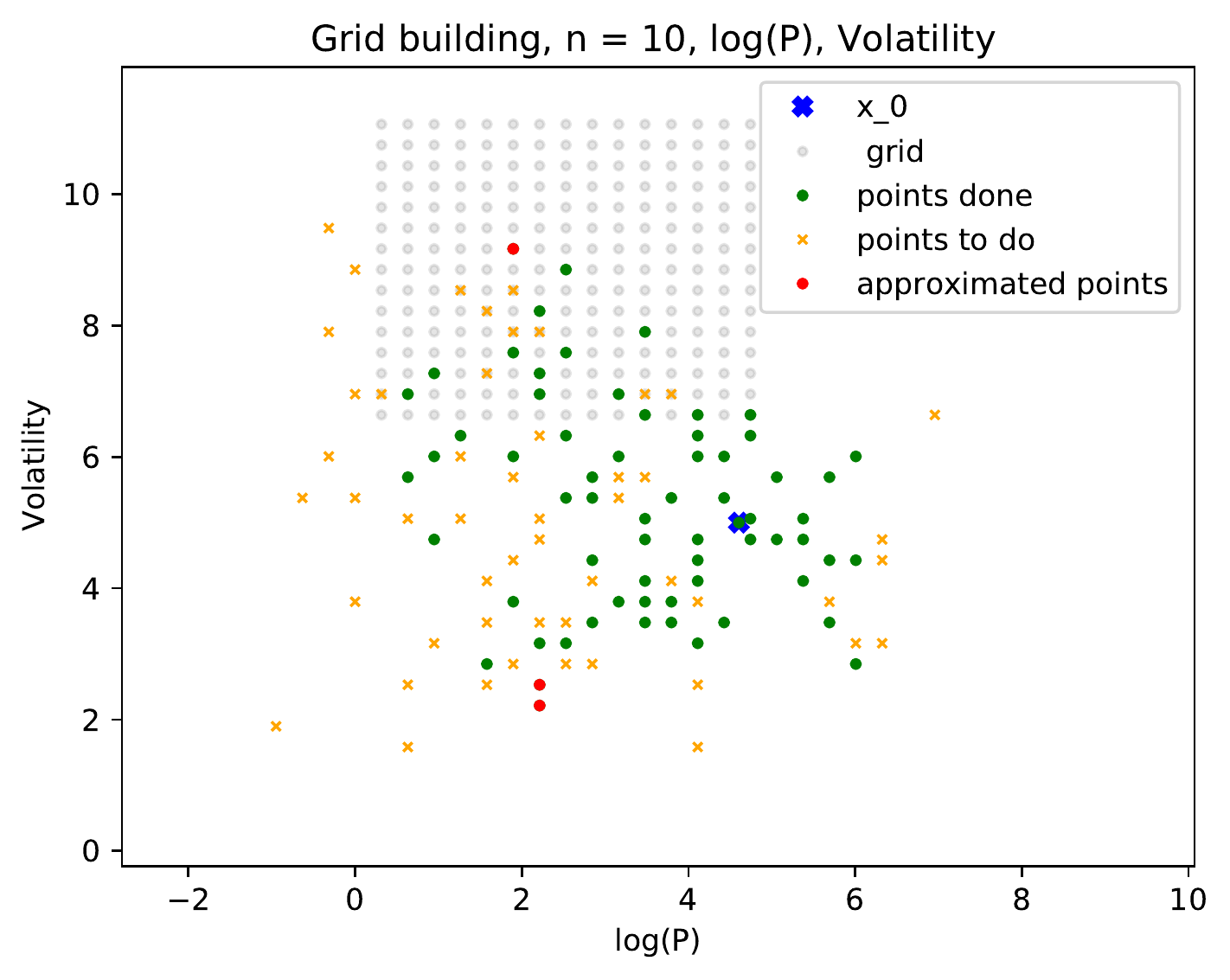}
        \includegraphics[height=4cm,width=0.32\textwidth]{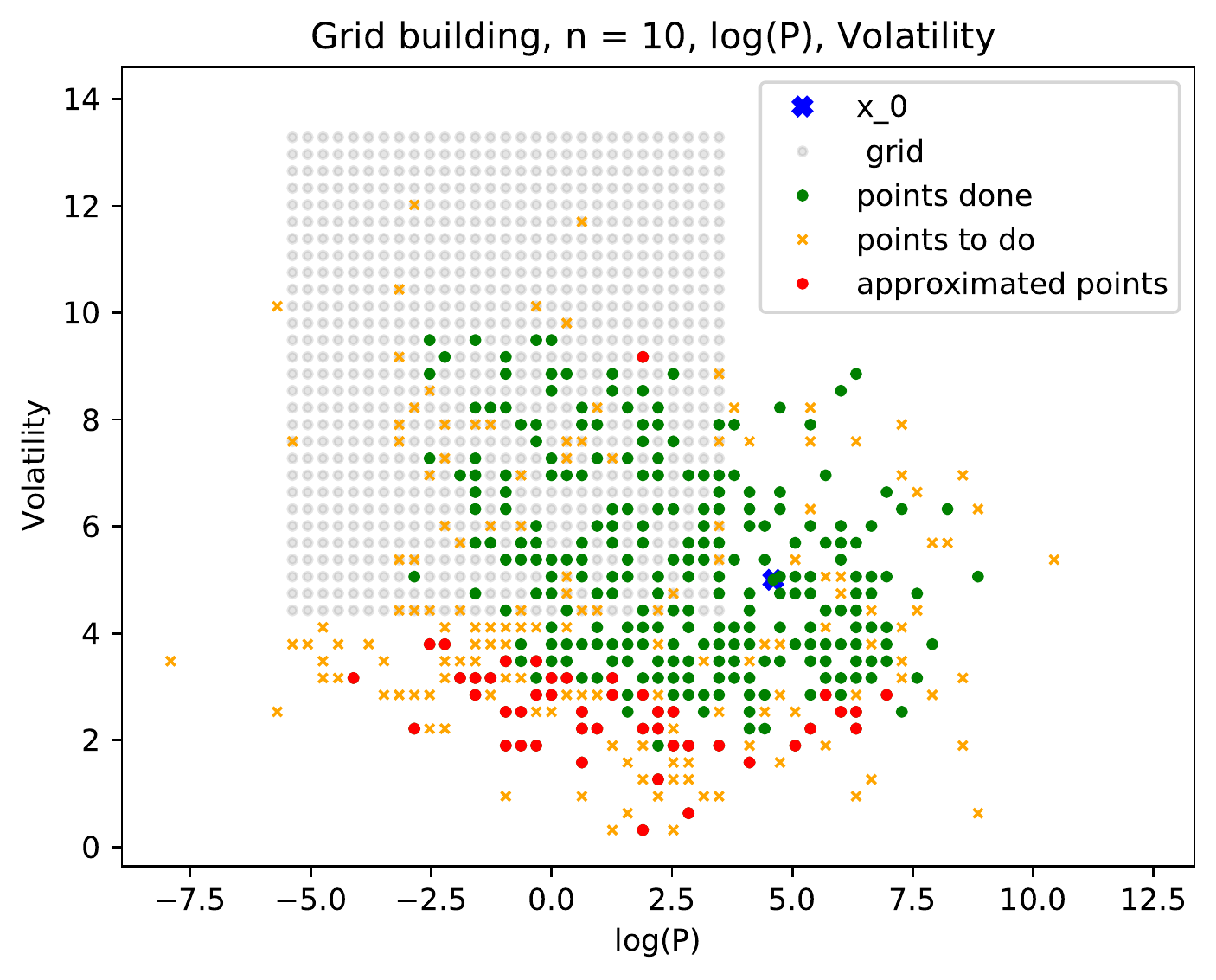}
        \includegraphics[height=4cm,width=0.32\textwidth]{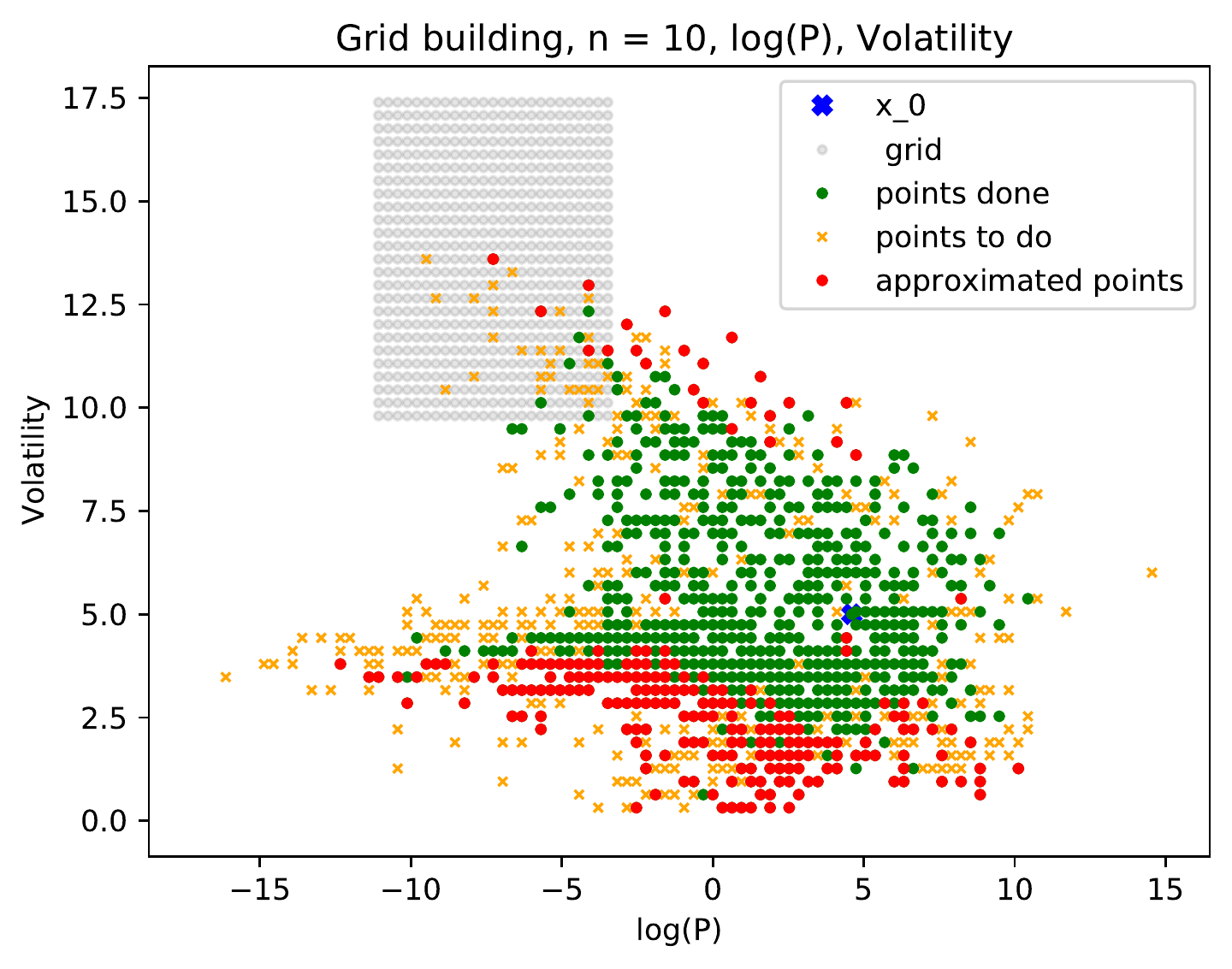}
    \caption[Construction of the transition matrix. ]{
    Construction of the transition matrix. 
    Starting from $x_0$ we consider to build the random variable $X_1$ such that it has support on the grey points and the first two moments are matched exactly. 
    In the top-left plot, $X_1$ has support on the yellow points. 
    Then this procedure is iterated for all the possible points in the support of the measure, following Algorithm~\ref{algo:tree_recomb}. 
    In contrast to Figure~\ref{fig:transition_matrix_toy} of the previous example, now also red points appear since we are in the situation of case~\eqref{itm:solve by constrained minimization} discussed above. 
    }
    \label{fig:transition_matrix}
\end{figure}
\begin{figure}[h!]
    \centering
        \includegraphics[height=4cm,width=0.32\textwidth]{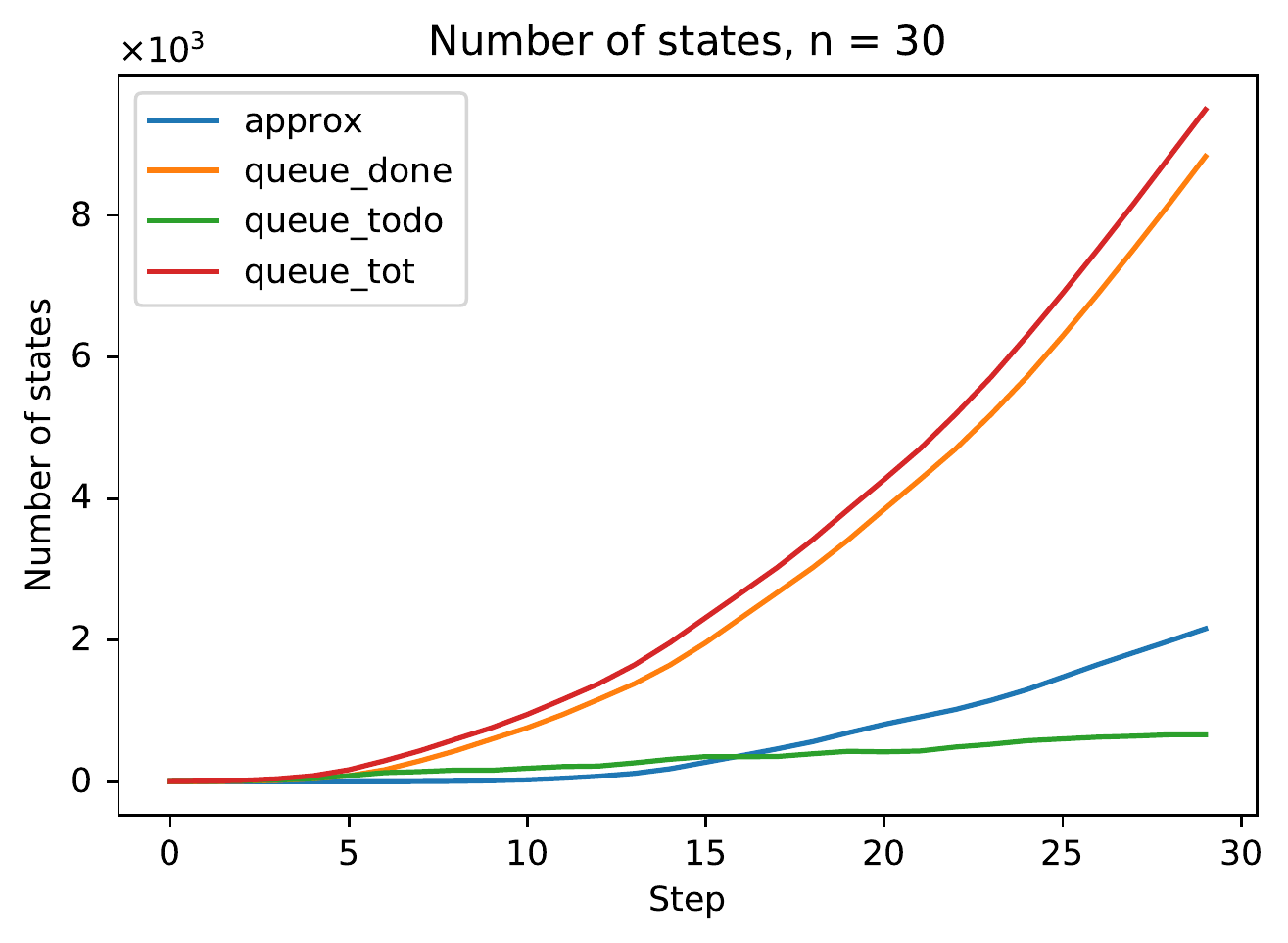}
        \includegraphics[height=4cm,width=0.32\textwidth]{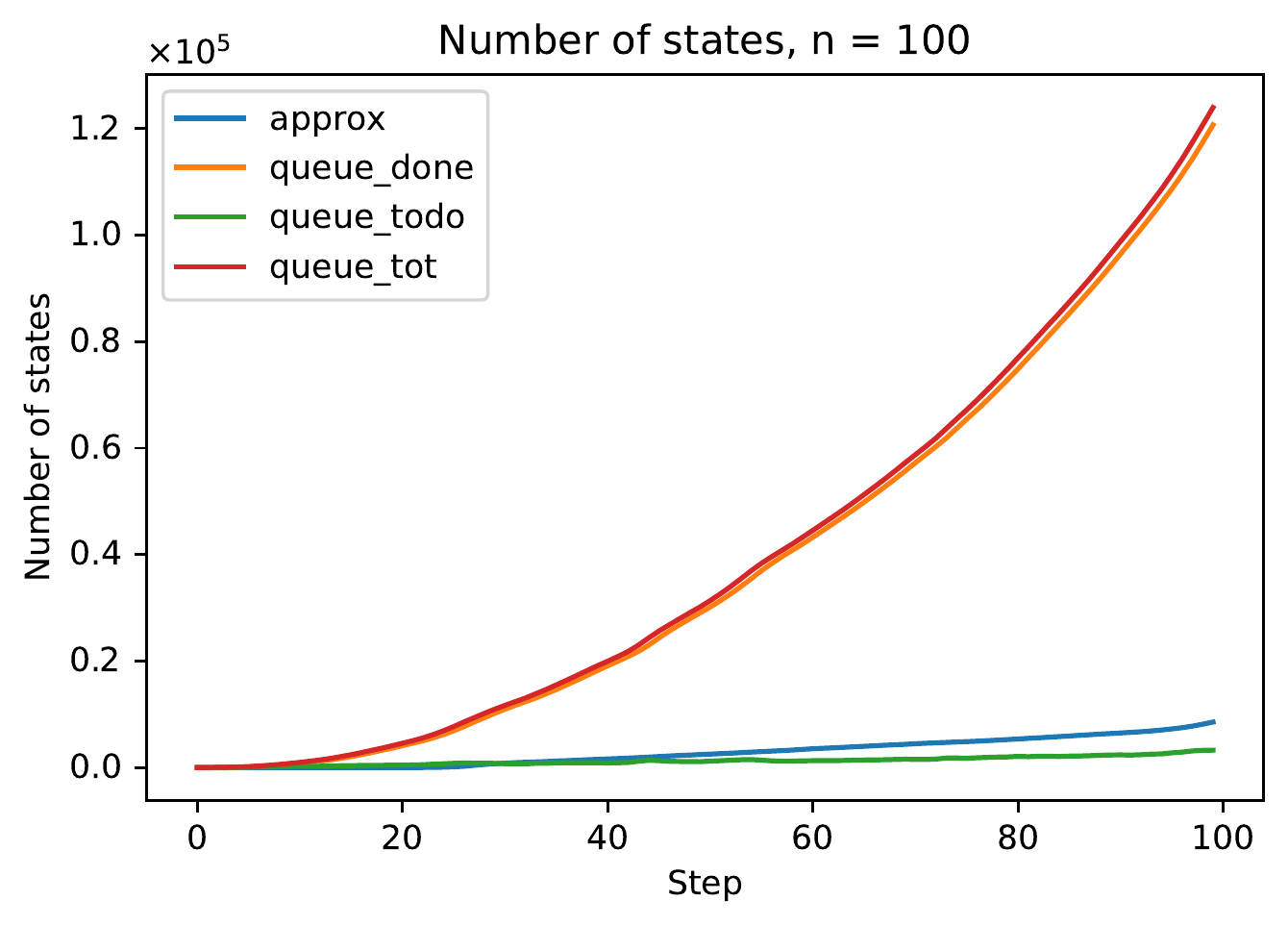}
        \includegraphics[height=4cm,width=0.32\textwidth]{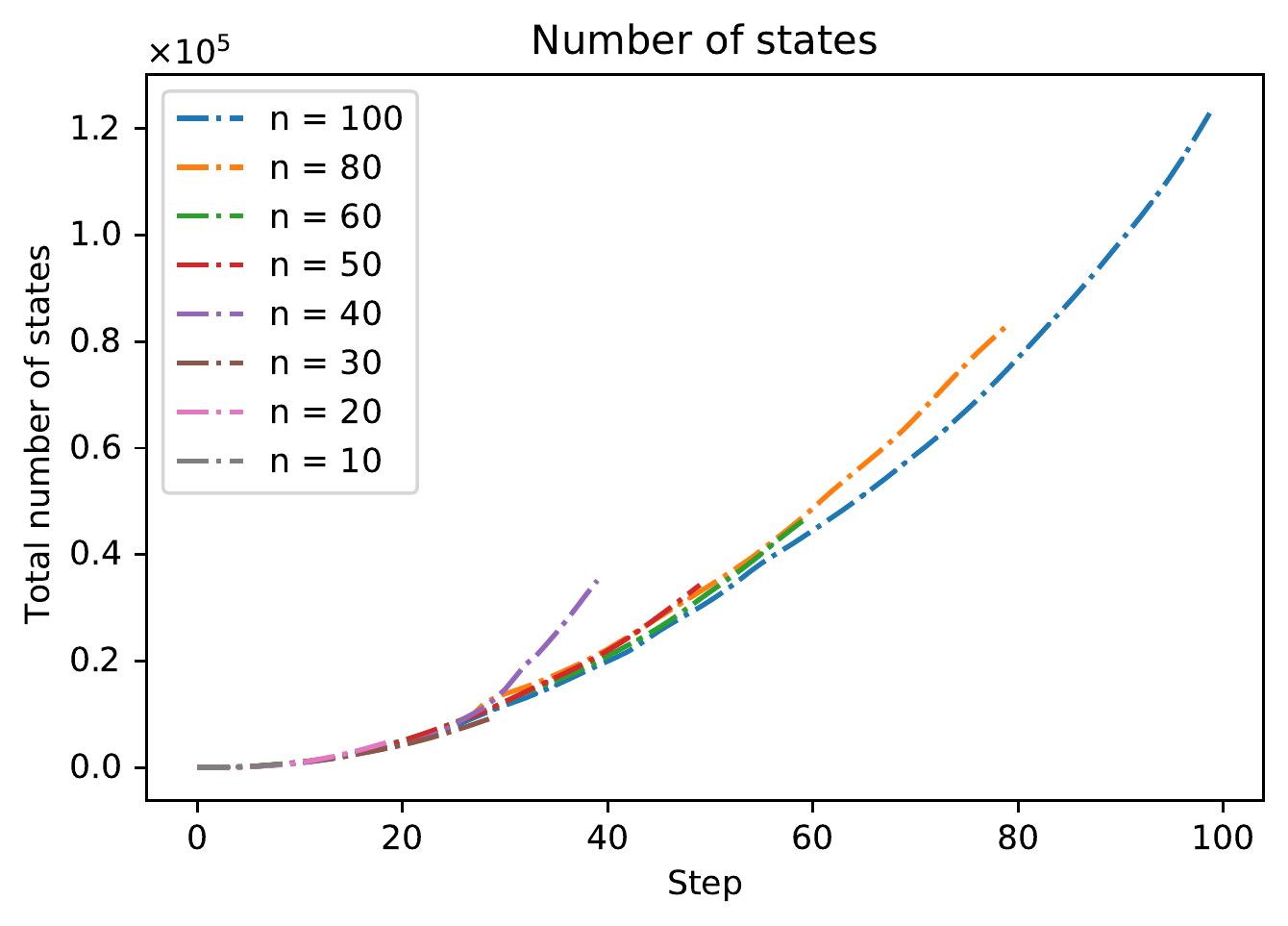}
    \caption[Growth of the number of states. ]{
    The left and middle plots show the growth of the cardinalities of the queues created by Algorithm~\ref{algo:tree_recomb} for fixed $n$ as $i$ increases. 
    The right plot shows a comparison of the growth of the number of states for different $n$ as $i$ increases. 
	}
    \label{fig:number_of_states_and_comparison_number_of_states}
\end{figure}
\begin{figure}[h]
    \centering
        \includegraphics[height=4cm,width=0.32\textwidth]{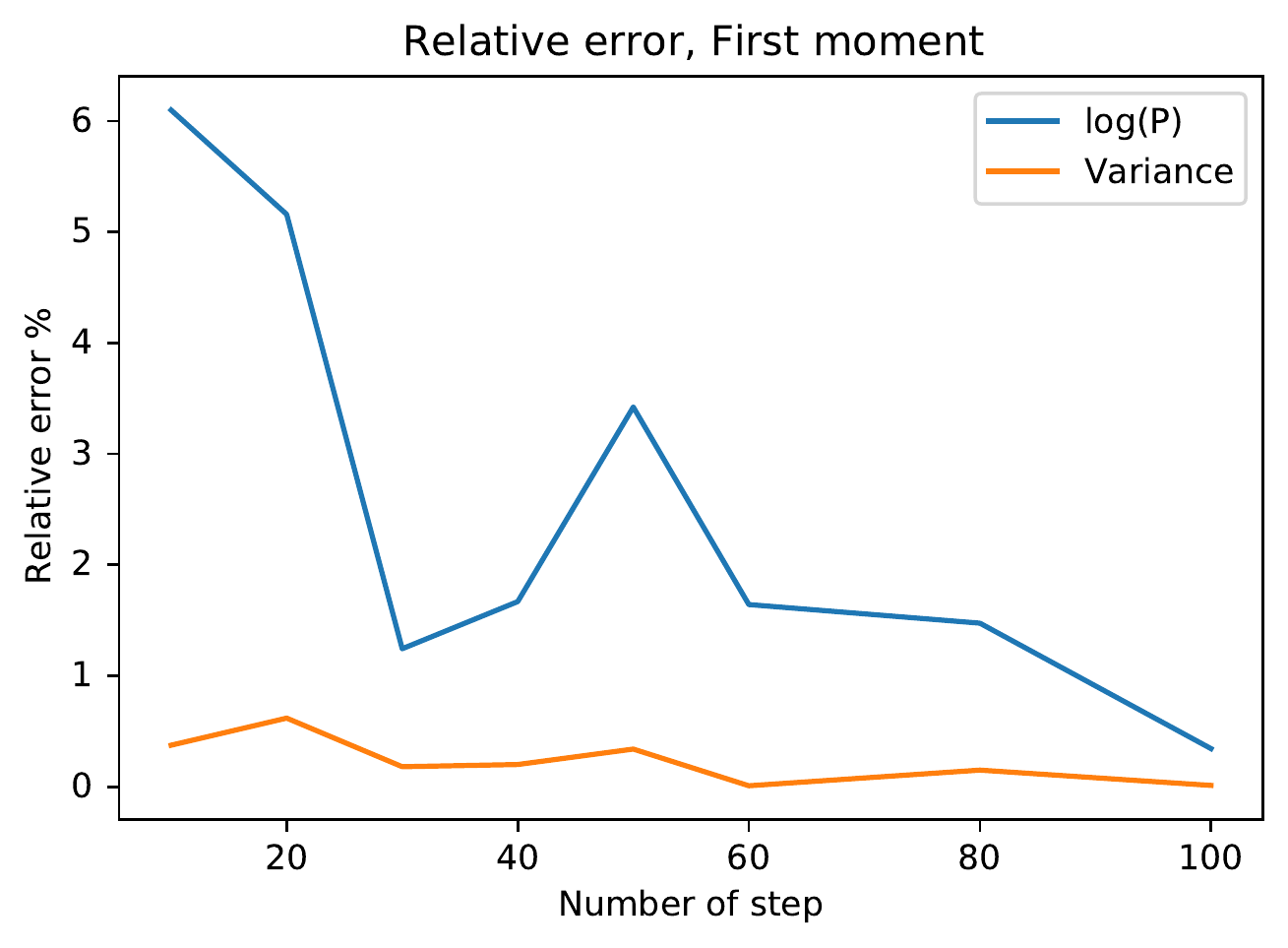}
        \includegraphics[height=4cm,width=0.32\textwidth]{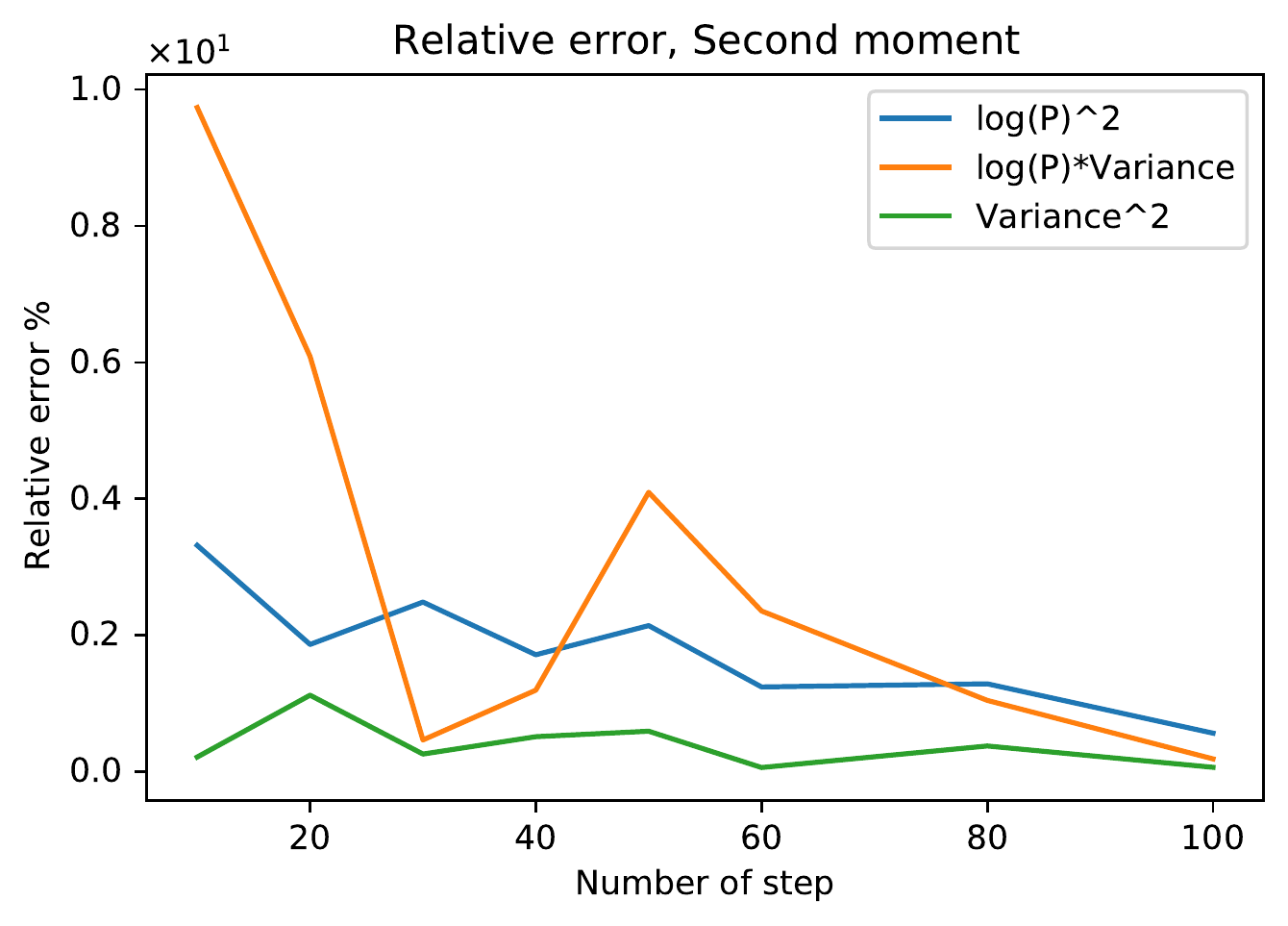}
        \includegraphics[height=4cm,width=0.32\textwidth]{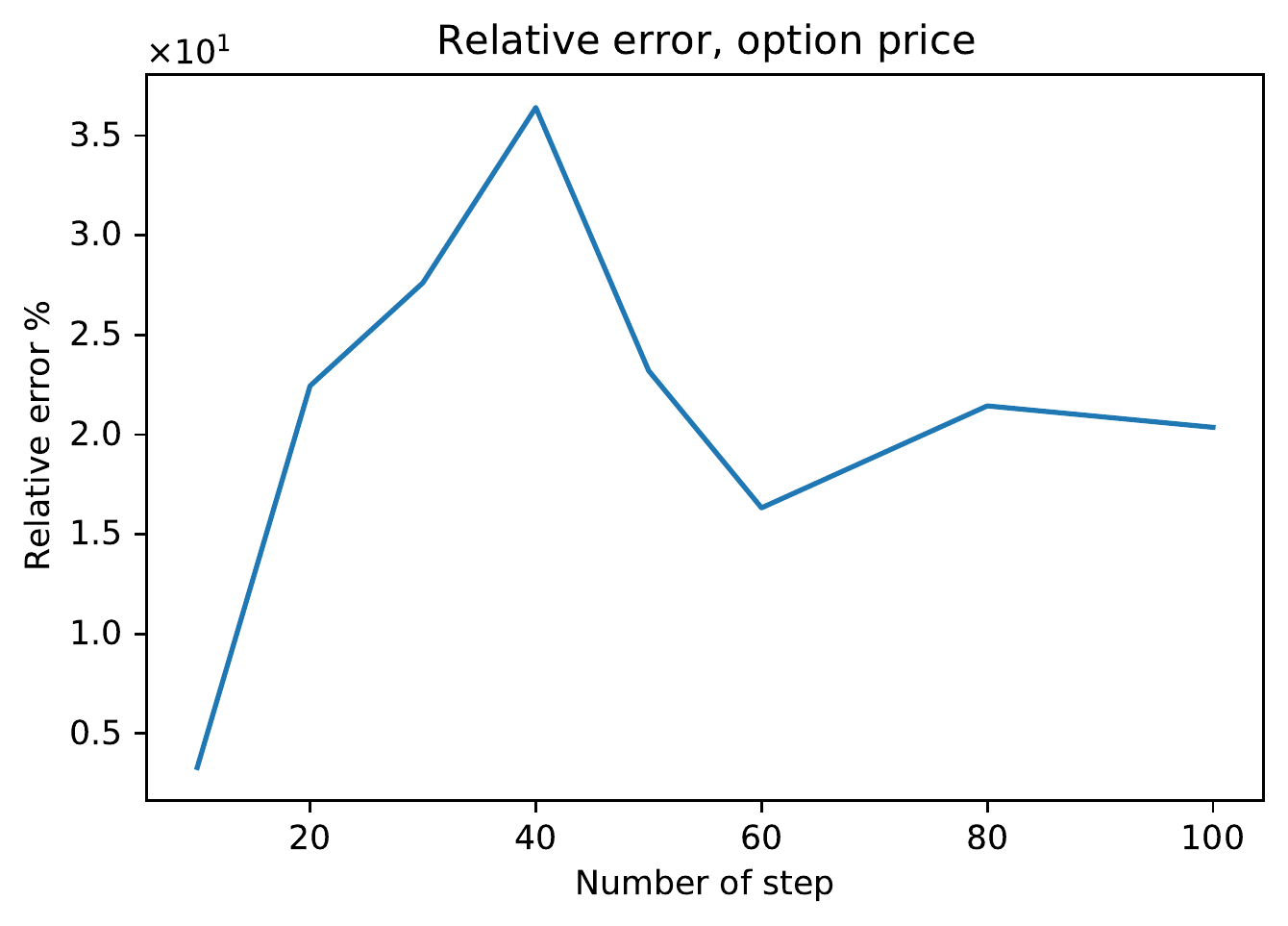}
        
        \includegraphics[height=4cm,width=0.32\textwidth]{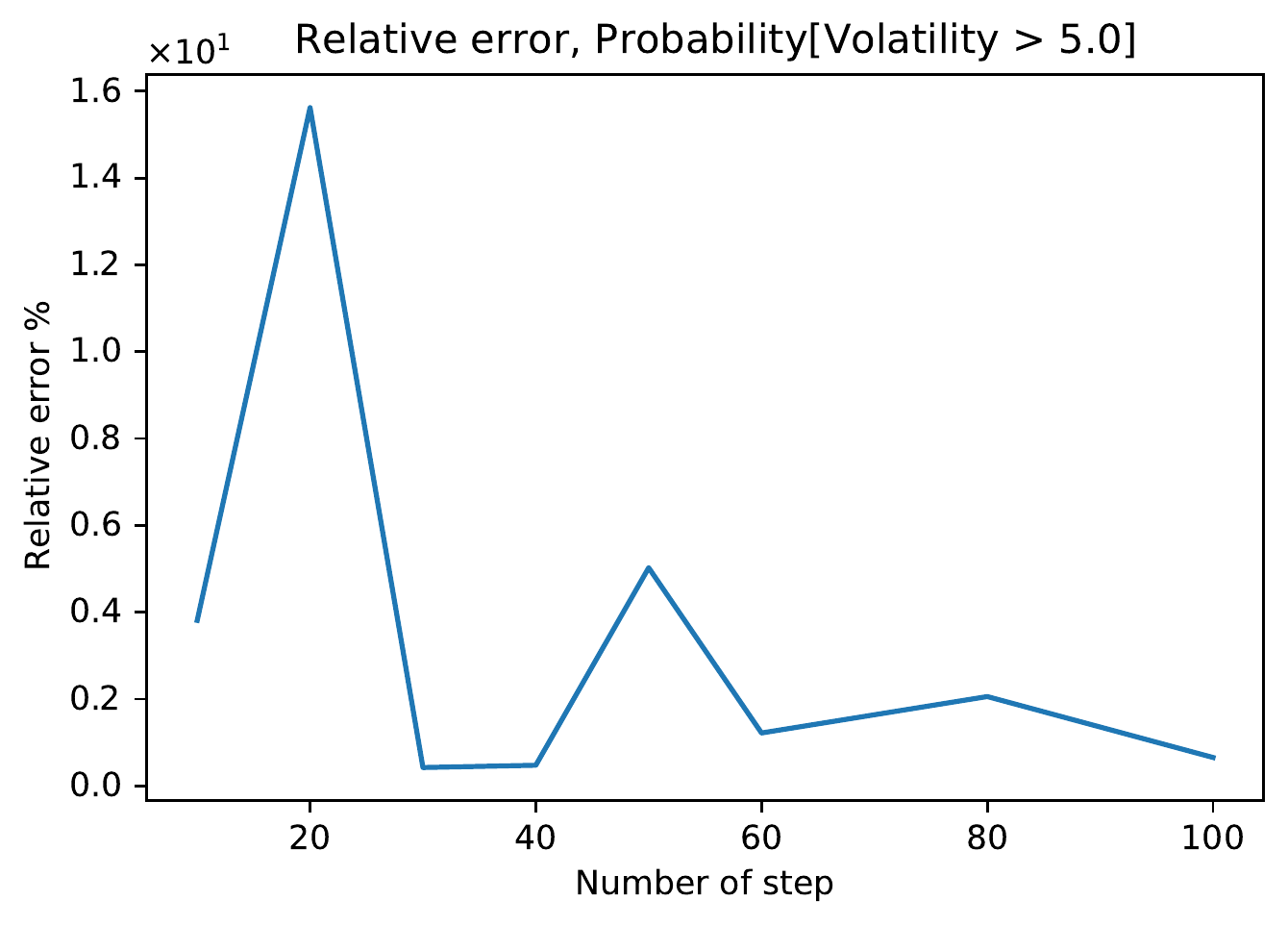}
        \includegraphics[height=4cm,width=0.32\textwidth]{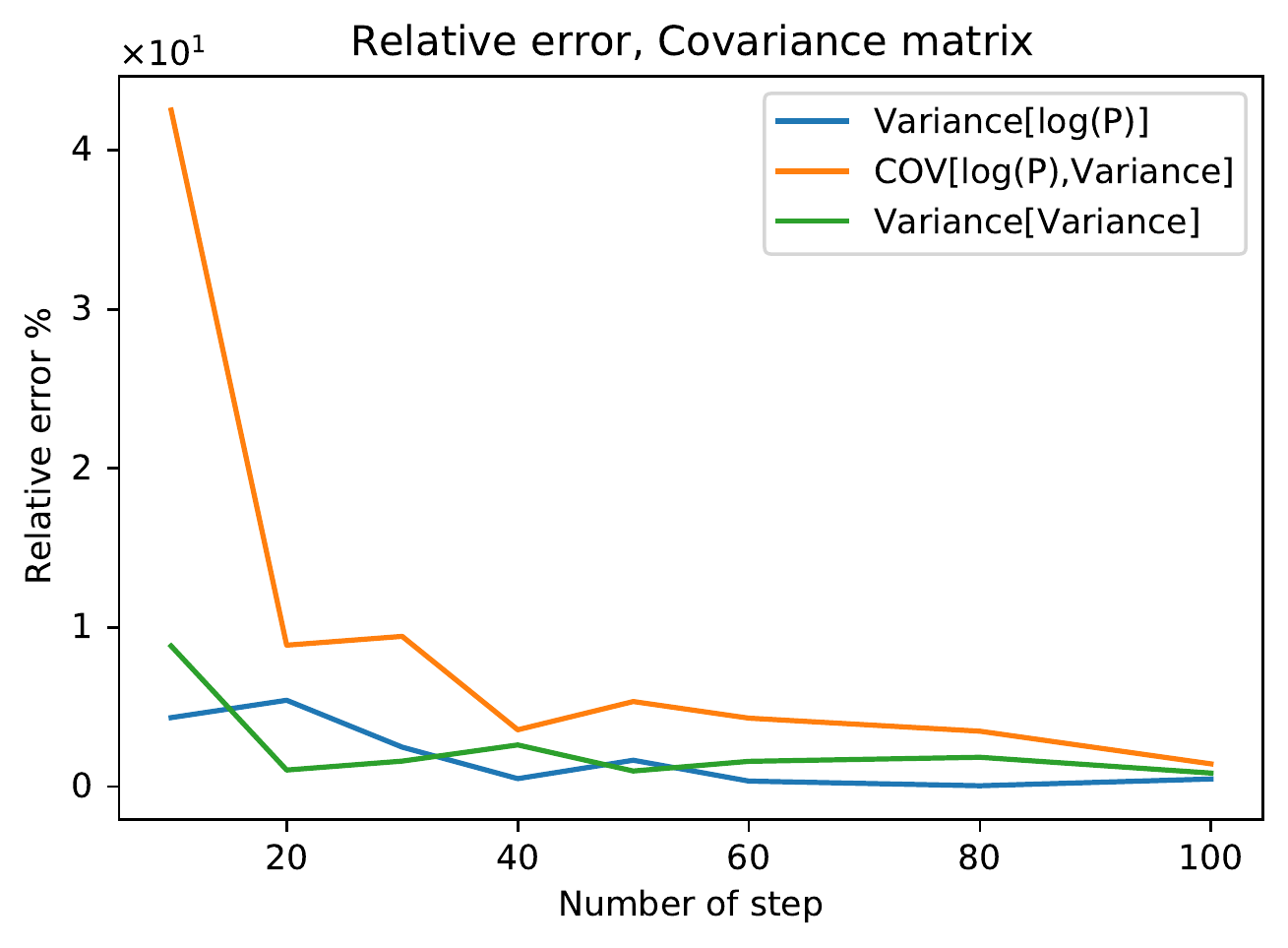}
    \caption[Relative errors. ]{Relative errors for some functionals of $P_1, V_1$. For the option, we consider a European Option with strike $100$.
    We compute the first  and the second moments, the variances and the probability that $Y_1$ is greater than $5$, respectively $\EE[(\log(P_1),V_1)], \EE[(\log(P_1), V_1)^{\otimes 2}],\VV[(\log(P_1), V_1)]$ and $\Prob[V_1>5]$.  The formula for the option price is $\EE[\max(P_1-100,0)]$. 
    As ground truth we consider an Euler scheme Monte Carlo simulation of the mode with $n=1000$ steps.  
    }
    \label{fig:rel_errors}
\end{figure}

\begin{figure}[h!]
    \centering
        \includegraphics[height=4cm,width=0.32\textwidth]{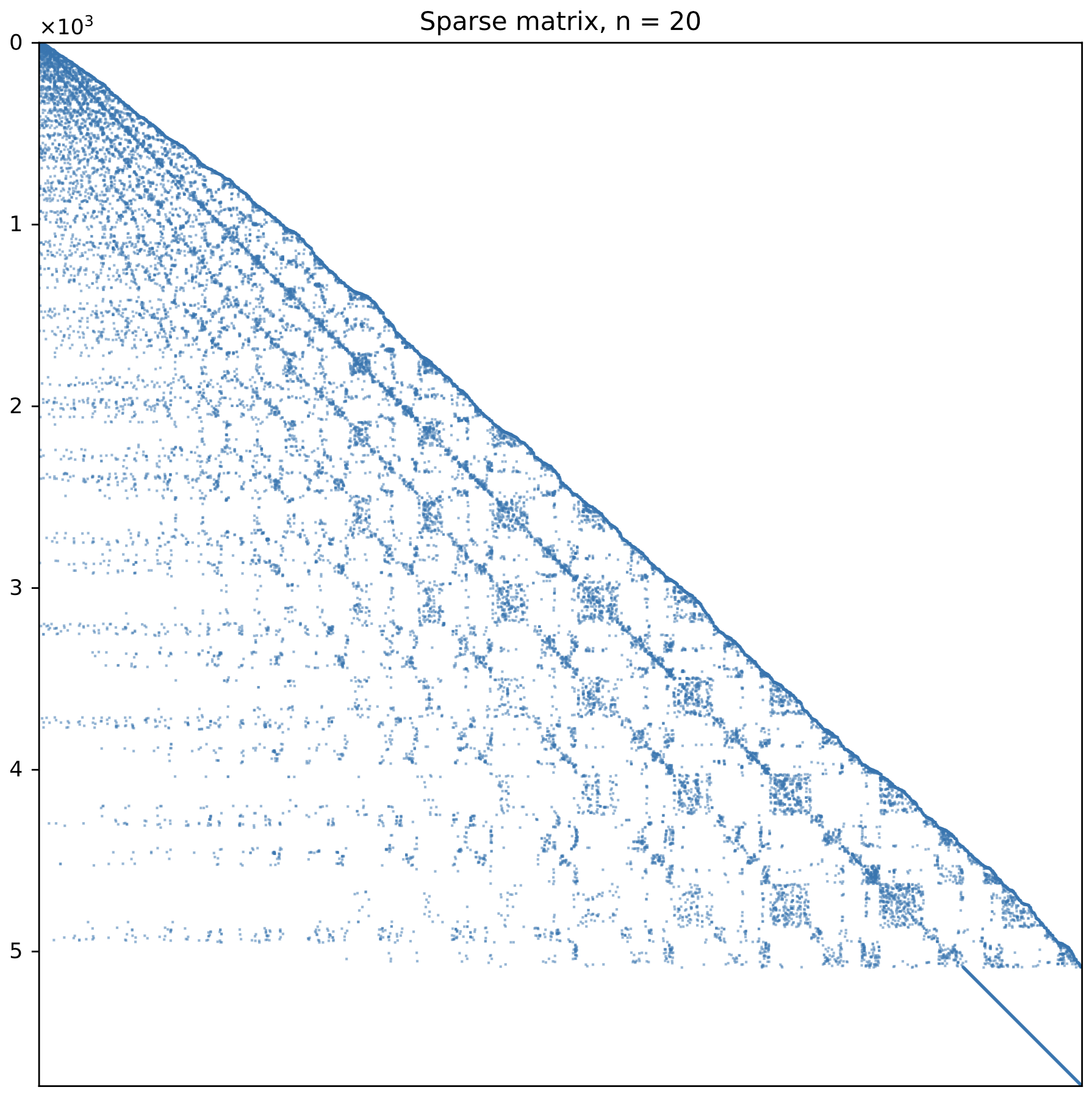}
        \includegraphics[height=4cm,width=0.32\textwidth]{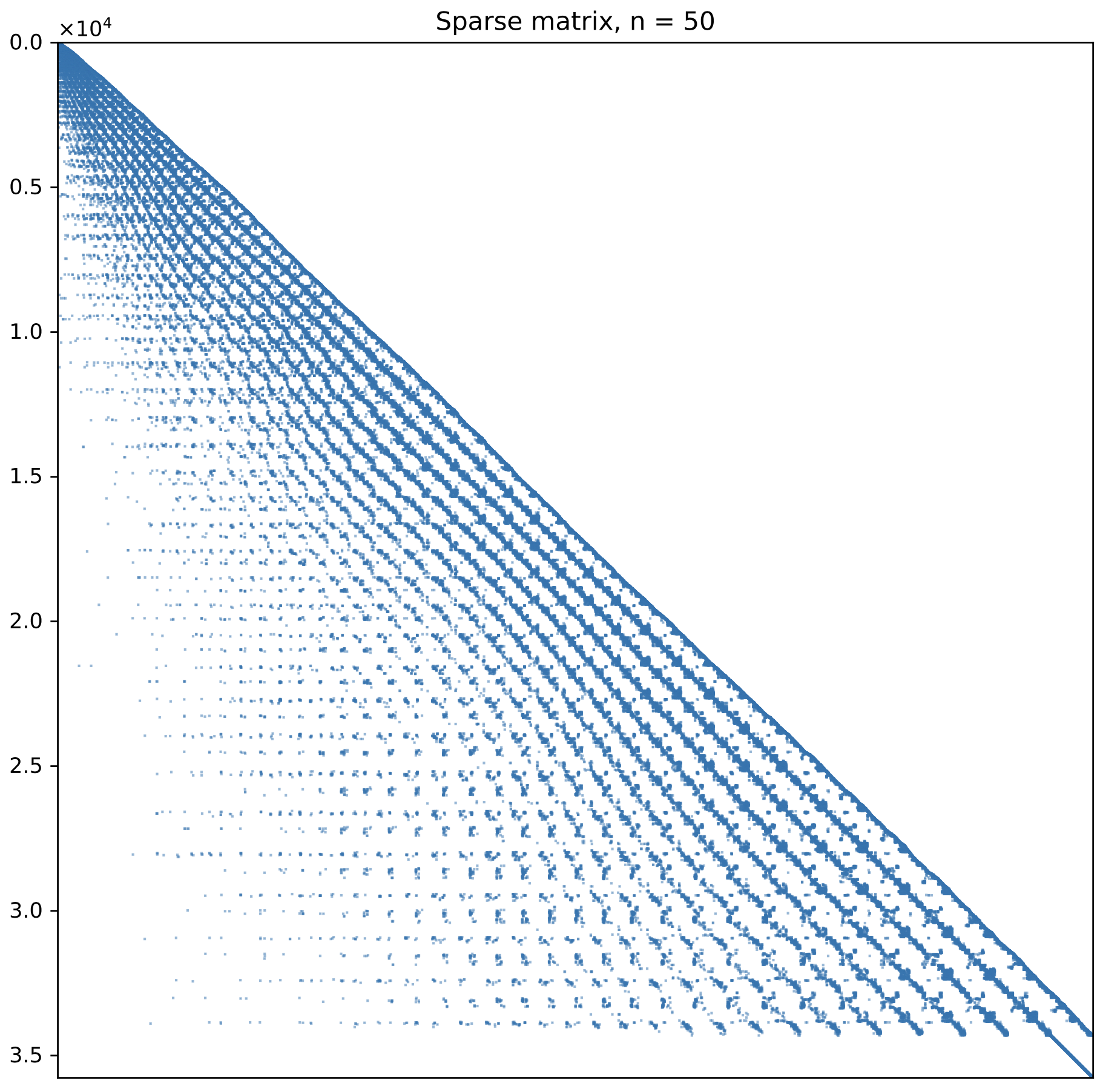}
        \includegraphics[height=4cm,width=0.32\textwidth]{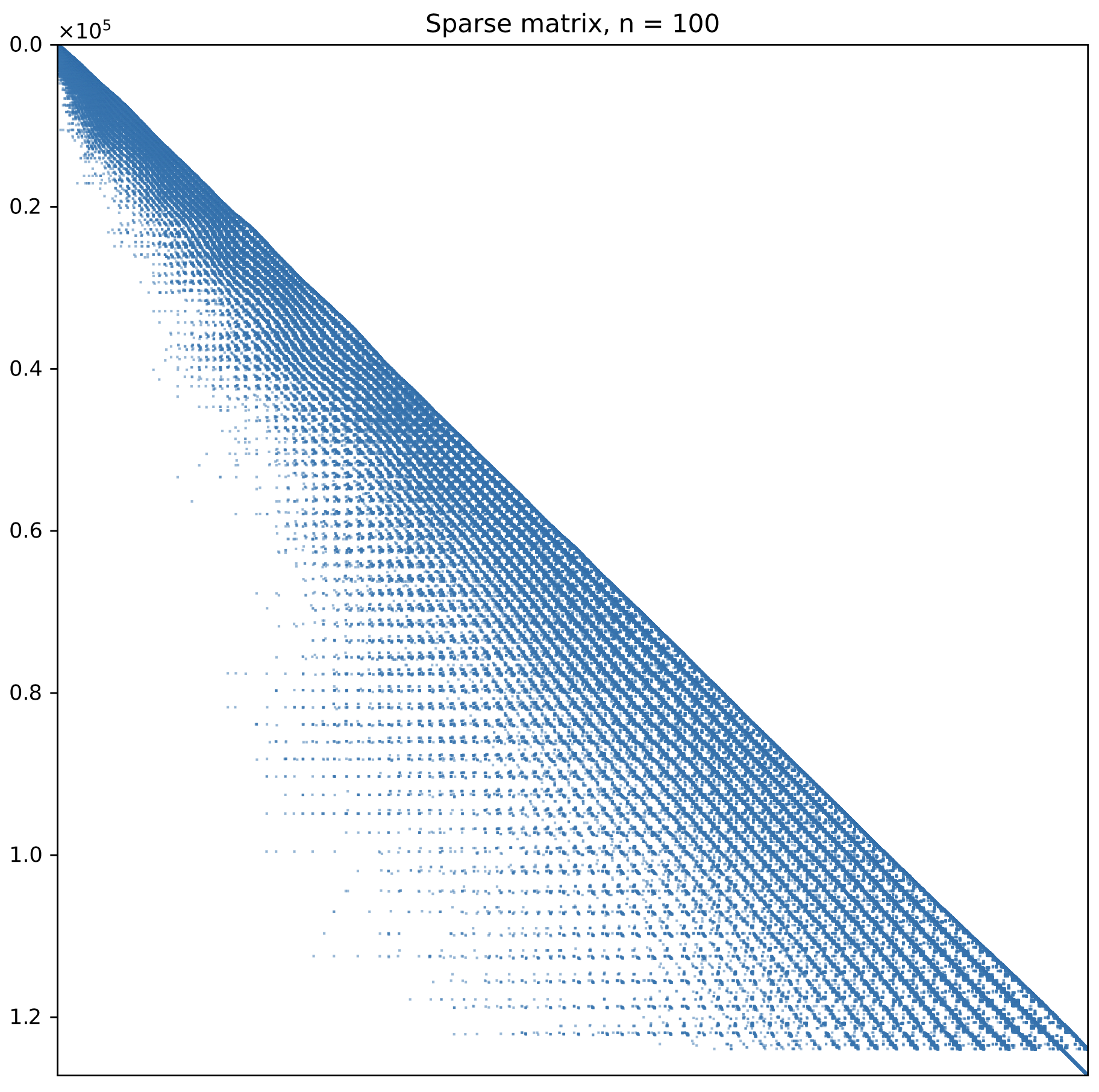}
    \caption[Sparse Matrix.]{Representation as sparse matrix of the Probability Transition Matrix for different values of $n$.}
    \label{fig:sparsity of the matrix}
\end{figure}

The assumptions of Theorem~\ref{th:mm_d=2} are not met since {the eigenvalues of the volatility are not bounded away from $0$ due to the presence of $V_t$. }.
Nevertheless, we can follow the approach outlined at the end of the above section, that is step~\eqref{itm:solve by constrained minimization} which aims to solve the constrained optimization problem~\eqref{eq:minim}. \\

\paragraph{Results.} For our simulations we set $\log(P_0)= \log(100)$, $V_0 = 5$, $\mu = 0$, $\lambda = 2$, $\rho = 0.2$, 
$\xi = 1$, $c = 2$, $k = 5$ and $t\in [0,1]$.
The set of experiments is the same as those presented in the previous Subsection. 
Figure~\ref{fig:transition_matrix} visualizes the iterative nature of the MC construction. 
It can be noticed that Figure~\ref{fig:transition_matrix} includes the presence of red points, which represent approximated points, i.e. the ones ``recombined'' numerically using the optimization problem~\ref{eq:minim}, whilst in Figure~\ref{fig:transition_matrix_toy} these are not present since the assumptions of Theorem~\ref{th:mm_d=2} are satisfied.  
Figure~\ref{fig:number_of_states_and_comparison_number_of_states} shows the growth of the state space as a function of time for different values of $n$, 
and Figure~\ref{fig:rel_errors} shows the relative error.  
Finally, Figure~\ref{fig:sparsity of the matrix} shows the resulting sparse transition matrices.
Note that the approximations error, Figure \ref{fig:rel_errors}, supports the intuition we have used in step~\eqref{itm:solve by constrained minimization}: although the assumptions of Theorem~\ref{th:mm_d=2} are not met, the model error resulting from using the MC is small. 
Again, it is not surprising  that the transition matrices are sparse,  Figure~\ref{fig:sparsity of the matrix}, since this is optimized by construction.

\clearpage
\bibliography{Biblio_tree}

\clearpage
\appendix

\section{Appendix - Lemmas}
\begin{lemma}\label{lem:eqv_loc_cons}
If $\EE | X_i^n |^q \le c(q, T, X_0)$, then Lemma~\ref{lemma: convergence assumptions}-Item~\eqref{it:my_local_consistency} implies Equation \eqref{eq:KP_consistency}.
\end{lemma}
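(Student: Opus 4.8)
The plan is to establish the two estimates in \eqref{eq:KP_consistency} separately, each by splitting the relevant ``consistency error'' via the triangle inequality into (i) a term comparing the conditional increment moments of $X^n$ with those of the SDE $X$ --- exactly the quantity controlled by Lemma~\ref{lemma: convergence assumptions}-Item~\eqref{it:my_local_consistency} --- and (ii) a term comparing the SDE's local increment moments with $\mu(x)n^{-1}$, resp.\ $\Sigma(x)n^{-1}$, which is controlled by the It\^o-expansions for $\EE[X_{i+1}-X_i\mid X_i=x]$ and for the drift-centered second moment recalled in the \textbf{Local Consistency} paragraph (here $X_{i+1}-X_i$ denotes, as in Item~\eqref{it:my_local_consistency}, the increment of the SDE over a time step $n^{-1}$ started from $x$). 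After bounding both terms by $c(1+|x|^{q})n^{-1-\alpha}$, one multiplies by $n$, squares, and takes the expectation over $x=X_i^n$, using the hypothesis $\EE|X_i^n|^{q}\le c(q,T,X_0)$ with a suitably enlarged exponent to absorb the polynomial-growth factor.

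For the first moment, fixing $x$ I would write
\[
n\,\EE[X_{i+1}^{n}-X_{i}^{n}\mid X_{i}^{n}=x]-\mu(x)
=n\big(\EE[X_{i+1}^{n}-X_{i}^{n}\mid X_{i}^{n}=x]-\EE[X_{i+1}-X_{i}\mid X_{i}=x]\big)
+\big(n\,\EE[X_{i+1}-X_{i}\mid X_{i}=x]-\mu(x)\big),
\]
bounding the first bracket by $c(1+|x|^{q})n^{-1-\alpha}$ using Item~\eqref{it:my_local_consistency} with $m=1$, and the second by $c(1+|x|^{q})n^{-1-\alpha}$ using the first estimate recalled in the \textbf{Local Consistency} paragraph, so that the left-hand side is $\le c(1+|x|^{q})n^{-\alpha}$. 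Substituting $x=X_i^n$, using $(1+|X_i^n|^{q})^{2}\le 2(1+|X_i^n|^{2q})$, squaring and taking expectations gives $\EE\big(n\,\EE[X_{i+1}^n-X_i^n\mid X_i^n]-\mu(X_i^n)\big)^{2}\le c\,(1+\EE|X_i^n|^{2q})\,n^{-2\alpha}=O(n^{-2\alpha})$ by hypothesis; so the first identity in \eqref{eq:KP_consistency} holds with $\delta(n)=n^{-2\alpha}$.

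The second moment is handled the same way, modulo one bookkeeping point: the It\^o-expansion is for the drift-\emph{centered} second moment $\EE[(X_{i+1}-X_i-n^{-1}\mu(x))^{\otimes2}\mid X_i=x]$, whereas \eqref{eq:KP_consistency} uses the uncentered one. Writing $\Delta:=X_{i+1}-X_i$ and $m:=n^{-1}\mu(x)$ and expanding $\Delta^{\otimes2}=(\Delta-m)^{\otimes2}+\Delta\otimes m+m\otimes\Delta-m^{\otimes2}$, then taking conditional expectations and using the first-moment expansion for $\EE[\Delta\mid X_i=x]$ together with the linear growth of $\mu$, one gets
\[
\EE[\Delta^{\otimes2}\mid X_i=x]-\Sigma(x)n^{-1}
=n^{-2}\mu(x)^{\otimes2}+O\big((1+|x|^{q+1})n^{-1-\alpha}\big)
=O\big((1+|x|^{q'})\,n^{-1-\bar{\alpha}}\big),
\]
with $\bar{\alpha}:=\min\{\alpha,1\}$ and $q'$ a slightly larger exponent (e.g.\ $q'=q+1$). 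Combining this with Item~\eqref{it:my_local_consistency} for $m=2$, then repeating the steps above (split, multiply by $n$, square, take expectations, and use the moment hypothesis with exponent $2q'$), yields $\EE\big(n\,\EE[(X_{i+1}^n-X_i^n)^{\otimes2}\mid X_i^n]-\Sigma(X_i^n)\big)^{2}=O(n^{-2\bar{\alpha}})$. Hence both conditions of \eqref{eq:KP_consistency} hold with $\delta(n)=n^{-2\bar{\alpha}}\to 0$.

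The argument is routine; the only thing that genuinely needs care is the centered-versus-uncentered discrepancy above --- i.e.\ checking that the extra term $n^{-2}\mu(x)^{\otimes2}$ (which is of order $n^{-1}$ after the factor $n$) is absorbed by passing from $\alpha$ to $\bar{\alpha}=\min\{\alpha,1\}$ and enlarging the polynomial exponent. Everything else is the triangle inequality, $(a+b)^{2}\le 2a^{2}+2b^{2}$, and the uniform moment bound $\sup_{i}\EE|X_i^n|^{q}\le c(q,T,X_0)$.
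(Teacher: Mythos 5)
Your proposal is correct and follows essentially the same route as the paper: split the error via the triangle inequality into the MC-versus-SDE discrepancy of Lemma~\ref{lemma: convergence assumptions}-Item~\eqref{it:my_local_consistency} and the SDE's own local-moment (It\^o) estimate, then multiply by $n$, square, take expectations, and invoke the uniform moment bound. The only difference is cosmetic: the paper quotes the uncentered second-moment expansion from \cite{Kloeden1992} directly, whereas you derive it from the drift-centered version and track the resulting $n^{-2}\mu(x)^{\otimes2}$ term via $\bar{\alpha}=\min\{\alpha,1\}$, which is in fact a slightly more careful bookkeeping of the same argument.
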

\begin{proof}%
Let us recall~\cite{Kloeden1992} that if the coefficients
$\mu, \sigma\in C^4_b$, we have that for any $x\in\R^d$ and for some $q\ge 0$
  \begin{align}
\EE[X_{t_{i+1}}-X_{t_{i}}|X_{t_{i}}=x]-n^{-1}\mu(x)=&O\left((1+|x|^{q})n^{-2}\right),\\\EE[(X_{t_{i+1}}-X_{t_{i}})^{\otimes2}|X_{t_{i}}=x]-n^{-2}\mu(x)^{\otimes2}+n^{-1}\Sigma(x)=&O\left((1+|x|^{q})n^{-2}\right),
\end{align}
in particular the second inequalities implies that 
\[
\EE[(X_{t_{i+1}}-X_{t_{i}})^{\otimes2}|X_{t_{i}}=x]-n^{-1}\Sigma(x)=O\left((1+|x|^{q})n^{-2}\right)
\]
From Lemma~\ref{lemma: convergence assumptions}-Item~\eqref{it:my_local_consistency}, {dividing} by $n^{-1}$, we obtain that
\begin{align}
\EE\left[\frac{X_{i+1}^{n}-X_{i}^{n}}{n^{-1}}\Big|X_{i}^{n}\right]-\mu(X_{i}^{n})=&O\left((1+|X_{i}^{n}|^{q})n^{-\alpha}\right),\\
\EE\left[\frac{(X_{i+1}^{n}-X_{i}^{n})^{\otimes2}}{n^{-1}}\Big|X_{i}^{n}\right]-\Sigma(X_{i}^{n})=&O\left((1+|X_{i}^{n}|^{q})n^{-\alpha}\right).
\end{align}
Taking the square on both sides and then the expectation, using that $\EE | X_i^n |^q \le c(q, T, X_0)$, we obtain the thesis.
\end{proof}
 \begin{lemma}\label{lem:maximum_scheme}
 If $X^n$ represents the schemes built in Theorem~\ref{th:mm_d=1}, \ref{th:mm_d=2}, \ref{th:mm_d=d} and {$n\gamma_n^2<c$} for some $c>0$, then 
 \begin{align}
 \EE[\max_{i}|X_{i}^{n}|^{2q}]\leq & c(1+|X_{0}|^{2q}), \quad \text{ for } q\ge 1.
 \end{align}
 \end{lemma}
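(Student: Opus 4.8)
The plan is to exploit the (exact) martingale structure of the increments and to close a discrete Gr\"onwall inequality for $u_i:=\EE[\max_{k\le i}|X^n_k|^{2q}]$, which is finite for every $i$ since each $X^n_i$ has finite support. Write $x\mapsto Y_x$ for the lattice-valued increment constructed in Theorems~\ref{th:mm_d=1}, \ref{th:mm_d=2}, \ref{th:mm_d=d}, so that conditionally on $\{X^n_i=x\}$ the increment $X^n_{i+1}-X^n_i$ has the law of $Y_x$, and $\EE[X^n_{i+1}-X^n_i\mid\mathcal F_i]=\mu(X^n_i)n^{-1}$ \emph{exactly} (item~1 of Theorems~\ref{th:t_d=1} and~\ref{th:t_d=2}, resp.\ $\mu\equiv0$ in Theorem~\ref{th:t_d=d}). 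The first step is to record an almost sure bound on the increments: applying the support bounds of Theorems~\ref{th:t_d=1}, \ref{th:t_d=2}, \ref{th:t_d=d} with $\ma=\mu(x)n^{-1}$ and $\vB=\Sigma(x)n^{-1}$, and using $|Qe_i|=1$, $\operatorname{tr}\Sigma(x)=\sum_i\lambda_i(x)$, the linear growth of $\mu,\sigma$, and the hypothesis $n\gamma_n^2<c$ (so $\gamma_n\le c\,n^{-1/2}$), one obtains
\[
|Y_x|\le c\,(1+|x|)\,n^{-1/2},\qquad\text{hence}\qquad \EE\big[|Y_x|^2\mid\mathcal F_i\big]\le c\,(1+|x|^2)\,n^{-1},
\]
with $c$ depending only on $\mu,\sigma$ and on the constant in $n\gamma_n^2<c$.

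Next I would decompose $X^n_i=X_0+A_i+M_i$, where $A_i:=n^{-1}\sum_{j=0}^{i-1}\mu(X^n_j)$ is the drift part and $M_i:=\sum_{j=0}^{i-1}\Delta M_j$, $\Delta M_j:=Y_{X^n_j}-\mu(X^n_j)n^{-1}$, is a martingale with respect to $(\mathcal F_i)$. Since $i\le n$ and $\mu$ has linear growth, Jensen's inequality applied with the $i$ weights $n^{-1}$ (of total mass $\le1$) gives $\max_{k\le i}|A_k|^{2q}\le c\big(1+n^{-1}\sum_{j=0}^{i-1}|X^n_j|^{2q}\big)$. For the martingale term I would invoke a discrete Burkholder--Davis--Gundy inequality, $\EE[\max_{k\le i}|M_k|^{2q}]\le c_q\,\EE\big[(\sum_{j=0}^{i-1}|\Delta M_j|^2)^{q}\big]$; the almost sure bound $|\Delta M_j|^2\le c(1+|X^n_j|^2)n^{-1}$ together with one more application of Jensen with weights $n^{-1}$ bounds the right-hand side by $c\,n^{-1}\sum_{j=0}^{i-1}\EE[1+|X^n_j|^{2q}]$.

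Combining these via $\max_{k\le i}|X^n_k|^{2q}\le 3^{2q-1}\big(|X_0|^{2q}+\max_{k\le i}|A_k|^{2q}+\max_{k\le i}|M_k|^{2q}\big)$, taking expectations, and using $\EE|X^n_j|^{2q}\le u_j$ yields the recursion
\[
u_i\le c\,(1+|X_0|^{2q})+c\,n^{-1}\sum_{j=0}^{i-1}u_j,\qquad i\le n,
\]
with $c$ independent of $n$. The discrete Gr\"onwall inequality (Lemma~\ref{lem:Gronw_dis}) then gives $u_i\le c\,(1+|X_0|^{2q})\exp(c\,i\,n^{-1})$, and since $i\le n$ this is $\le c\,e^{c}(1+|X_0|^{2q})$, which is the asserted bound with a constant independent of $n$.

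The main obstacle is the martingale step: the crude triangle inequality $|X^n_i|\le|X_0|+\sum_{j<i}|Y_{X^n_j}|$ — which is enough for the \emph{deterministic} support bounds proved inside Theorems~\ref{th:mm_d=1}, \ref{th:mm_d=2}, \ref{th:mm_d=d} — is far too lossy here, because each increment is only of size $O(n^{-1/2})$ while there are $O(n)$ of them, so the fluctuation part genuinely requires the cancellation in $\sum_j\Delta M_j$ (the drift part, being $O(n^{-1})$ per step, sums harmlessly). A secondary point to check carefully is that the almost sure increment bound $|Y_x|\le c(1+|x|)n^{-1/2}$ really does hold in the multivariate cases, where the support bounds are phrased in terms of the eigenvalues of $\Sigma(x)$ rather than $\sigma^2(x)$; but this is immediate from $\sum_i\lambda_i(x)=\operatorname{tr}\Sigma(x)\le c(1+|x|^2)$ and $|Qe_i|=1$.
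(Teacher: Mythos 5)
Your proposal is correct and follows essentially the same route as the paper's proof: the same decomposition of $X^n_i$ into $X_0$, the drift sum, and the martingale $\sum_{j<i}\bigl(Y_{X^n_j}-n^{-1}\mu(X^n_j)\bigr)$, bounded via Burkholder--Davis--Gundy and Jensen using the almost sure increment bound $|Y_x|\le c(1+|x|)n^{-1/2}+k\gamma_n$ together with $n\gamma_n^2<c$, and closed by the discrete Gr\"onwall lemma. The only cosmetic difference is that you run Gr\"onwall directly on $u_i=\EE[\max_{k\le i}|X^n_k|^{2q}]$, whereas the paper first bounds $\EE[\max_{i\le n}|X^n_i|^{2q}]$ by $c\bigl(1+|X_0|^{2q}+n^{-1}\sum_j\EE|X^n_j|^{2q}\bigr)$ and then applies Gr\"onwall to $\EE|X^n_i|^{2q}$.
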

\begin{proof}
Let us first note that 
\[
 X^n_{i+1} - X^n_i | X_i^n=x \sim Y_x 
\] 
In Theorems~\ref{th:mm_d=1},~\ref{th:mm_d=2} and~\ref{th:mm_d=d} we have proved that if the SDE coefficients are bounded the r.v. $Y_x$, can be built such that $|Y_x|\leq c_{\mu,\sigma}n^{-1/2}+k\gamma_n$, whilst if the coefficients are linearly bounded then the r.v. $Y_x$, can be built s.t. $|Y_x|\leq c_{\mu,\sigma}(1+|x|)n^{-1/2}+k\gamma_n$, for some $k>0$. 
Since we are interested in the finite time horizon we suppose for simplicity $T=1$, which means $i\leq  n$.\\
Let us define $\mY_{i}^{n}=\sum_{j=0}^{i-1} Y_{X_{j}^{n}}-n^{-1}\mu(X^n_{j})$. 
By construction, to satisfy Item \autoref{lemma: convergence assumptions}-\eqref{it:my_local_consistency}, we build the r.v. 
$Y_{(\cdot)}$ s.t. $\mathbb{E}_{j}Y_{X_{j}^{n}}=n^{-1}\mu(X^n_{j})$, which {implies that $\mY_{i}^{n}$ is a martingale}. Thus, we can apply the Burkholder-Davis-Gundy (BDG) Inequality  
\begin{align}
\mathbb{E}\left[\max_{i\leq n}\left|\mY_{i}^{n}\right|^{2q}\right]\leq c_{q}\mathbb{E}\left\langle \mY_{n}^{n}\right\rangle ^{q}\leq&c_{q}\mathbb{E}\left[\sum_{j=0}^{n-1}\left|Y_{X_{j}^{n}}-n^{-1}\mu(X_{j}^{n})\right|^{2}\right]^{q}\\\leq&c_{q}n^{q-1}\sum_{j=0}^{n-1}\mathbb{E}\left|Y_{X_{j}^{n}}-n^{-1}\mu(X_{j}^{n})\right|^{2q}\\\leq&c_{q}n^{q-1}\left[\sum_{j=0}^{n-1}c_{\mu,\sigma}(1+\mathbb{E}|X_{j}^{n}|^{2q})n^{-q}+k\gamma_{n}^{2q}\right]\\\leq&c_{q}n^{q-1}\left[nk\gamma_{n}^{2q}+c_{\mu,\sigma}n^{-q}n+c_{\mu,\sigma}n^{-q}\sum_{j=0}^{n-1}\mathbb{E}|X_{j}^{n}|^{2q}\right]\\\leq&c_{q}(n\gamma_{n}^{2})^{q}+c_{q,\mu,\sigma}+c_{q,\mu,\sigma}n^{-1}\sum_{j=0}^{n-1}\mathbb{E}|X_{j}^{n}|^{2q}\\\leq&c_{\mu,\sigma,q}\left(1+n^{-1}\sum_{j=0}^{n-1}\mathbb{E}|X_{j}^{n}|^{2q}\right),
\end{align}
where at the $4$-th step  ($3$-rd line) we have used the fact that $|Y_{X_{j}^{n}}|\leq c_{\mu,\sigma}(1+|X_j^n|)n^{-1/2}+k\gamma_n$ and $\mathbb{E}Y_{X_{j}^{n}}=n^{-1}\mu(X^n_{j})$, so $n^{-1}\mu(X^n_{j})\in \text{Convex Hull}\{\operatorname{supp}(Y_{X_{j}^{n}})\}$; 
at the last step we have used the assumption that $n\gamma_n^2<c$, for some $c>0$. \\
We can now proceed to bound $\mathbb{E}\left[\max_{ i\leq n}|X_{i}^{n}|^{2q}\right]$ 
\begin{align}
\mathbb{E}\big[\max_{i\leq n}|&\left.X_{i}^{n}|^{2q}\right]=\mathbb{E}\left[\max_{i\leq n}\left|X_{0}\!+\!\sum_{j=0}^{i-1}Y_{X_{j}^{n}}\right|^{2q}\right]\\=&\mathbb{E}\left[\max_{i\leq n}\left|X_{0}\!+\!\sum_{j=0}^{i-1}n^{-1}\mu(X_{j}^{n})\!+\!\sum_{j=0}^{i-1}Y_{X_{j}^{n}}-n^{-1}\mu(X_{j}^{n})\right|^{2q}\right]\\\leq&c_{q}\left\{ \left|X_{0}\right|^{2q}\!+\!\mathbb{E}\left[\max_{i\leq n}\left|n^{-1}\sum_{j=0}^{i-1}\mu(X_{j}^{n})\right|^{2q}\right]\!+\!\mathbb{E}\left[\max_{i\leq n}\left|\sum_{j=0}^{i-1}Y_{X_{j}^{n}}-n^{-1}\mu(X_{j}^{n})\right|^{2q}\right]\!\right\} \\\leq&c_{q}\left\{ \left|X_{0}\right|^{2q}\!+\!n^{-2q}n^{2q-1}\sum_{j=0}^{n-1}\mathbb{E}\left|\mu(X_{j}^{n})\right|^{2q}\!+\!\mathbb{E}\left[\max_{i\leq n}\left|\mY_{i}^{n}\right|^{2q}\right]\right\} \\\leq&c_{q}\left\{ \left|X_{0}\right|^{2q}\!+\!n^{-1}\sum_{j=0}^{n-1}c_{q,\mu,\sigma}\left(1\!+\!\mathbb{E}|X_{j}^{n}|^{2q}\right)\!+\!c_{\mu,\sigma,q}\!+\!c_{\mu,\sigma,q}n^{q-1}n^{-q}\sum_{j=0}^{n-1}\mathbb{E}|X_{j}^{n}|^{2q}\right\} \\\leq&c_{q}\left\{ \left|X_{0}\right|^{2q}\!+\!c_{q,\mu,\sigma}n^{-1}\sum_{j=0}^{n-1}\mathbb{E}|X_{j}^{n}|^{2q}\!+\!2c_{q,\mu,\sigma}\!+\!c_{q,\mu,\sigma}n^{-1}\sum_{j=0}^{n-1}\mathbb{E}|X_{j}^{n}|^{2q}\right\} \\\leq&c_{q,\mu,\sigma}\left\{ \left|X_{0}\right|^{2q}\!+\!1\!+\!n^{-1}\sum_{j=0}^{n-1}\mathbb{E}|X_{j}^{n}|^{2q}\right\} .
\end{align}
It remains now to bound $\mathbb{E}|X_{i}^{n}|^{2q}$: 
applying the {discrete Grönwall’s Lemma}~\ref{lem:Gronw_dis}, from the previous equation we know that 
\begin{align}
\mathbb{E}|X_{i}^{n}|^{2q}\leq&\mathbb{E}\left[\max_{1\leq i\leq n}|X_{i}^{n}|^{2q}\right]\\\leq&c_{q,\mu,\sigma}\left\{ \left|X_{0}\right|^{2q}+1+n^{-1}\sum_{j=0}^{n-1}\mathbb{E}|X_{j}^{n}|^{2q}\right\} \\\leq&c_{q,\mu,\sigma}\left\{ \left|X_{0}\right|^{2q}+1\right\} +c_{q,\mu,\sigma}\left\{ \left|X_{0}\right|^{2q}+1\right\} \sum_{j=0}^{i-1}n^{-1}\exp\left\{ \sum_{j=0}^{n-1}n^{-1}\right\} \\\leq&c_{q,\mu,\sigma}\left\{ \left|X_{0}\right|^{2q}+1\right\} (1+e).
\end{align}
To conclude the proof we can notice that using the Jensen Inequality and the sub-additivity of the root we have that
\begin{align}
\mathbb{E}\left[\max_{i\leq n}|X_{i}^{n}|^{q}\right]\leq&\sqrt{\mathbb{E}\left[\max_{i\leq n}|X_{i}^{n}|^{2q}\right]}\\\leq&\sqrt{c_{q,\mu,\sigma}\left\{ \left|X_{0}\right|^{2q}+1\right\} (1+e)}\leq c_{q,\mu,\sigma}\left\{ \left|X_{0}\right|^{q}+1\right\} (1+e).
\end{align}
\end{proof}
We state here the discrete Grönwall’s Lemma that we have used in some previous proofs of this work. 
\begin{lemma}\label{lem:Gronw_dis} [Discrete Grönwall's {Lemma \cite{Holte2009}}]
Let $y_n, f_n$, and $g_n$ be non-negative sequences such that for $n>0$, 
$
y_{n}\leq f_{n}+\sum_{k=0}^{n}g_{k}y_{k},
$
then 
\[
y_{n}\leq f_{n}+\sum_{k=0}^{n}f_{k}g_{k}\exp\left(\sum_{j=k}^{n}g_{j}\right).
\]
\end{lemma}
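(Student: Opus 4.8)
The plan is to reduce the (implicit) inequality for $y_n$ to an \emph{explicit} one-step recursion for the partial sums $S_n \coloneqq \sum_{k=0}^{n-1} g_k y_k$, to solve that recursion by a finite induction, and finally to pass from products to exponentials via the elementary bound $1+x\le e^{x}$ for $x\ge 0$.

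First I would dispose of the endpoint term. Writing $S_n=\sum_{k=0}^{n-1}g_k y_k$, the hypothesis reads $y_n\le f_n+g_ny_n+S_n$; since every sequence is non-negative, this is exactly the form $y_n\le f_n+S_n$ that is actually invoked in the applications of this lemma in the present paper, and if one insists on the displayed statement with the full sum, then whenever $g_n<1$ one divides through by $1-g_n$, which only enlarges the constants and is absorbed by the deliberately loose final bound. So I work with $y_n\le f_n+S_n$ and $S_0=0$.

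The key step is the recursion for $S_n$. From $y_n\le f_n+S_n$ one has $S_{n+1}-S_n=g_n y_n\le g_n f_n+g_n S_n$, i.e. $S_{n+1}\le (1+g_n)S_n+g_n f_n$. A finite induction on $n$ then gives
\[
S_n \le \sum_{k=0}^{n-1} g_k f_k \prod_{j=k+1}^{n-1}(1+g_j),
\]
the inductive step being the computation $S_{n+1}\le (1+g_n)\sum_{k=0}^{n-1}g_kf_k\prod_{j=k+1}^{n-1}(1+g_j)+g_nf_n=\sum_{k=0}^{n}g_kf_k\prod_{j=k+1}^{n}(1+g_j)$, with the empty product equal to $1$. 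Applying $1+x\le e^{x}$ to each factor and then enlarging the (non-negative) index set in the exponent yields $\prod_{j=k+1}^{n-1}(1+g_j)\le \exp\bigl(\sum_{j=k+1}^{n-1}g_j\bigr)\le \exp\bigl(\sum_{j=k}^{n}g_j\bigr)$. Substituting back,
\[
y_n \le f_n+S_n \le f_n+\sum_{k=0}^{n-1} f_k g_k \exp\Bigl(\sum_{j=k}^{n}g_j\Bigr)\le f_n+\sum_{k=0}^{n} f_k g_k \exp\Bigl(\sum_{j=k}^{n}g_j\Bigr),
\]
which is the claimed bound.

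There is essentially no hard step here: the statement is classical (see \cite{Holte2009}) and the argument is bookkeeping. The only points requiring care are the treatment of the endpoint term $g_n y_n$ in the hypothesis and keeping the index ranges straight when iterating the recursion for $S_n$; I would stress that the inequality is stated in a slightly loose form (outer sum up to $n$, exponent from $k$ to $n$) precisely so that the sharper product bound obtained from the recursion implies it immediately, by monotonicity of sums of non-negative terms.
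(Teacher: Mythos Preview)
The paper does not give its own proof of this lemma; it is stated with a citation to \cite{Holte2009} and used as a black box. Your argument is the standard one---reduce to the one-step linear recursion $S_{n+1}\le(1+g_n)S_n+g_nf_n$ for the partial sums, iterate to a product, and pass to the exponential via $1+x\le e^x$---and it is correct for the hypothesis with the sum running to $n-1$, which is exactly the form actually used in the paper (both in the proof of Theorem~\ref{th:mm_d=1} and in Lemma~\ref{lem:maximum_scheme}).

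Your handling of the endpoint term $g_n y_n$ in the displayed hypothesis is the only loose point: the claim that dividing by $1-g_n$ is ``absorbed by the deliberately loose final bound'' is not quite right, since $(1-g_n)^{-1}$ is not in general bounded by $e^{g_n}$, and of course if $g_n\ge 1$ the hypothesis as written gives no control on $y_n$ at all. This is almost certainly a typo in the statement (the standard formulation in \cite{Holte2009} has the sum to $n-1$), and as you correctly observe it is irrelevant for the applications here.
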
 
\section{Appendix - Details on Experiments}\label{app:details}
 In both the experiments we decided on the following strategy 
 \begin{enumerate}[(a)]%
 \item \label{itm:try rand algo} try to solve~\eqref{eq:fundamental_system} using Algorithm 2 in \cite{Cosentino2020}; 
 \item if after a specified maximal number of iterations, step \eqref{itm:try rand algo} has not returned a solution, solve the minimization problem~\eqref{eq:minim} and then we apply Algorithm 3 in \cite{Cosentino2020}, given the sequence of weights solving~\eqref{eq:minim} and so~\eqref{eq:fundamental_system}. 
 Remember that Algorithm 3 in \cite{Cosentino2020} is a combination of the Algorithms in \cite{maria2016a} and \cite{Cosentino2020}. 
 \end{enumerate}
 The results in \cite{Cosentino2020} can also help to reduce further the cardinality of the state space. 
 \cite[Theorem 3, item 3]{Cosentino2020}
 tells us that given a solution of~\eqref{eq:fundamental_system} we could possibly modify it changing the points (and therefore the weights) if the new points satisfy a specified condition. 
 The idea is thus to search if there are points $|\bar l_j|\leq c, \bar l_j\in\gamma\Z^2$ such that 
 $\bar l_j$ satisfies the condition explicited in \cite[Theorem 3, item 3]{Cosentino2020} and 
 $\bar l_j$ have been already used in the past - indeed let us recall that we build the tree recursively starting from the initial point. 
 Briefly, we indicate $\{l_j^\star\}, \{w_j^\star\}$ %
 a solution of~\eqref{eq:fundamental_system}, when the Caratheodory's reduction has already taken place, 
 and $C(\{\cdot\})$ indicates the cone built from the points $\{\cdot\}$. 
 \cite[Theorem 3, item 3]{Cosentino2020} claims that any point into the cone $C(\{-l_{j}^{\star}\}\setminus-l_{1}^{\star})$ can be exchanged with $l_{1}^{\star}$ to have a new solution to~\eqref{eq:fundamental_system}. 
 Namely, considering that we want to find a solution in $\gamma\Z^2$, we can look for any point $\bar l$ in 
 \[
 \{|l|<c,\,\,l\in\gamma\Z^{2}\}\cap\{l\in\gamma\Z^{2}:x+l\text{ has been used to build previous r.v.}\}\cap C(\{-l_{j}^{\star}\}\setminus-l_{1}^{\star}).
 \]
 The points $\bar l\cup \{l_{j}^{\star}\}\setminus l_{1}^{\star}$ constitute another solution to~\eqref{eq:fundamental_system}, changing accordingly the weights. 
 This can be generalized for any $l_{i}^{\star}$, not only $l_{1}^{\star}$ and can be iteratively done for all of them. 
 This procedure can help us to reduce further the total number of states necessary to approximate the considered SDE, being able to re-use points already used, and therefore increasing the ``level of recombination'' of the tree. As a drawback, this procedure leads to a higher computational cost.

\end{document}